\newcommand*{\relrelbarsep}{.386ex}
\newcommand*{\relrelbar}{%
  \mathrel{%
    \mathpalette\@relrelbar\relrelbarsep
  }%
}
\newcommand*{\@relrelbar}[2]{%
  \raise#2\hbox to 0pt{$\m@th#1\relbar$\hss}%
  \lower#2\hbox{$\m@th#1\relbar$}%
}
\providecommand*{\rightrightarrowsfill@}{%
  \arrowfill@\relrelbar\relrelbar\rightrightarrows
}
\providecommand*{\leftleftarrowsfill@}{%
  \arrowfill@\leftleftarrows\relrelbar\relrelbar
}
\providecommand*{\xrightrightarrows}[2][]{%
  \ext@arrow 0359\rightrightarrowsfill@{#1}{#2}%
}
\providecommand*{\xleftleftarrows}[2][]{%
  \ext@arrow 3095\leftleftarrowsfill@{#1}{#2}%
}
\DeclareMathOperator*{\holim}{holim}
\DeclareMathOperator*{\colim}{colim}
\DeclareMathOperator*{\hocolim}{hocolim}
\begin{document}
\newcommand{\End}{\text{End}}
\newcommand{\ad}{\text{ad}}
\newcommand{\tr}{\text{tr}}
\newcommand{\bDelta}{\bf{\Delta}}
\newcommand{\Hom}{\text{Hom}}
\newcommand{\D}{\text{D}}
\newcommand{\Tot}{\text{Tot}}
\newcommand{\dgCat}{\text{dgCat}}
\newcommand{\DK}{\text{DK}}
\newcommand{\cat}{\mathcal}

\theoremstyle{plain}
\newtheorem{thm}{Theorem}
\newtheorem{prop}{Proposition}
\newtheorem{coro}{Corollary}
\newtheorem{conj}{Conjecture}
\newtheorem{lemma}{Lemma}

\theoremstyle{definition}
\newtheorem{defi}{Definition}[section]

\theoremstyle{remark}
\newtheorem{eg}{Example}[section]
\newtheorem{rmk}{Remark}[section]
\newtheorem{ctn}{Caution}

\title{Explicit homotopy limits of dg-categories and twisted complexes}
\author{Jonathan Block,
\textit{blockj@math.upenn.edu},\\
\small{Department of Mathematics,
209 S 33 Street,
University of Pennsylvania,
Philadelphia,
PA, 19104
USA}
\and Julian Holstein, \textit{julianvsholstein@gmail.com},\\
\small{Department of Mathematics and Statistics,
Fylde College,
Lancaster University,
Lancaster, LA1 4YF,
UK}
\and
Zhaoting Wei,
\textit{zwei3@kent.edu},\\
\small{Department of Mathematical Sciences,
Kent State University,
Kent,
OH 44242,
USA}}

\maketitle

MSC Classes: 18D20, 18G55, 14F05

Keywords: differential graded categories, twisted complexes

\begin{abstract}
In this paper we study the homotopy limits of cosimplicial diagrams of dg-categories.
We first give an explicit construction of the totalization of such a diagram and then show that the totalization agrees with the homotopy limit in the following two cases:
(1) the complexes of sheaves of $\mathcal O$-modules on the \v{C}ech nerve of an open cover of a ringed space $(X, \mathcal O)$;
(2) the complexes of sheaves on the simplicial nerve of a discrete group $G$ acting on a space.
The explicit models we obtain in this way are twisted complexes as well as their $D$-module and $G$-equivariant versions. As an application we show that there is a stack of twisted perfect complexes.
\end{abstract}

\section{Introduction}
Homotopy limits in a model category are an important way to present descent data. In this paper we consider homotopy limits in $\dgCat_{\DK}$,
the category of (small) dg-categories with the Dwyer-Kan model structure \cite{tabuada2005structure}.

The homotopy limit of some diagrams in $\dgCat_{\DK}$ can be explicitly constructed. For example in \cite{ben2013milnor} Section 4 the \emph{homotopy fibre product} $B\times^h_D C$ of the diagram
$$
\begin{CD}
 @. B\\
@.@VVcV\\
C @>d>> D
\end{CD}
$$
is given using the \emph{path object} in $\dgCat_{\DK}$ constructed in \cite{tabuada2010homotopy}. In \cite{ben2013milnor} the authors use this construction to further prove the Milnor descent of cohesive dg-categories.

Higher dimensional analogues of the path object are given by \emph{simplicial resolutions} in $\dgCat_{\DK}$ which were constructed in (\cite{holstein2014properness}).
They can be used to explicitly compute arbitrary homotopy limits and we will use them to take homotopy limits over cosimplicial diagrams.

In algebra and geometry it is often interesting to consider the homotopy limit of a cosimplicial diagram in $\dgCat$.
For example, let $\mathcal{U}=\{U_i\}$ be an open cover of a ringed space $(X, \mathcal{O})$ and the functor
$$
Cpx: \text{Space}^{op}\rightarrow \dgCat
$$
be the contravariant functor which assigns to each  ringed space $U$ the dg-category of complexes of $\mathcal{O}_U$-modules on $U$. Then we have a cosimplicial diagram in $\dgCat_{\DK}$
\begin{equation}\label{equation: cosimplicial diagram of Cpx of an open cover}
\begin{tikzcd}
\prod Cpx(U_i) \arrow[yshift=0.7ex]{r}\arrow[yshift=-0.7ex]{r}& \prod Cpx(U_i\cap U_j) \arrow[yshift=1ex]{r}\arrow{r}\arrow[yshift=-1ex]{r}  &  \prod Cpx(U_i\cap U_j\cap U_k) \cdots
\end{tikzcd}
\end{equation}
Intuitively the homotopy limit of Diagram \ref{equation: cosimplicial diagram of Cpx of an open cover} is given by gluing data and higher gluing data and in this paper we give a precise construction: the homotopy limit is given by the dg-category of \emph{twisted complexes}, see Section \ref{subsection: twisted complexes} for details.

We deduce from this characterization that the presheaf of twisted perfect complexes is in fact a stack.
\begin{rmk}
 In \cite{toledo1978duality} Toledo and Tong  first introduced twisted complexes as a tool to glue the characteristic classes of coherent sheaves on complex manifolds. Implicitly they have shown that twisted complexes present the descent data of complexes of sheaves. However, as far as we know, the fact that twisted complexes are the homotopy limit of Diagram (\ref{equation: cosimplicial diagram of Cpx of an open cover})  has never been explicitly stated and proved  in the literature. This is the main reason for us to write the current paper.
\end{rmk}

\begin{rmk}
We will also consider variations of this question where the cover is not given by open subsets of $X$ and $Cpx$ is only a pseudo-functor
instead of a functor and hence the diagram corresponding to Diagram (\ref{equation: cosimplicial diagram of Cpx of an open cover}) is not a strict cosimplicial diagram. We resolve this problem by rectification, see Section \ref{subsection: rectification} for details.
\end{rmk}

The strategy of our construction is that we first construct the \emph{totalization} of the cosimplicial diagram
and then show that the homotopy limit and totalization are weakly equivalent in good cases.
Actually many interesting cosimplicial diagrams in $\dgCat$ fall into these "good cases", see Section \ref{subsection: twisted complexes} and Section \ref{subsection: group action} for details.

The paper is organized as follows: In Section \ref{section: A review of homotopy limit in a model category} we review the Reedy model structure and the homotopy limit of a cosimplicial diagram in a general model category.

In Section \ref{section: The homotopy limits of cosimplicial objects in dgCat} we focus on dg-categories. In Section \ref{subsection: simplicial resolution in dgCat} we review the simplicial resolution of a dg-category. In Section \ref{subsection: homotopy limit in dgCat} we give the construction of the totalization and homotopy limit of a cosimplicial diagram in $\dgCat_{\DK}$.

In Section \ref{section: twisted complexes} we use these results to study twisted complexes. In
particular, in Section \ref{subsection: rectification} we construct a rectification of the diagrams associated with pseudo-functors. In Section \ref{subsection: Criterion of Reedy fibrancy} we give a criterion for when the totalization is weakly equivalent to the homotopy limit. In Section \ref{subsection: twisted complexes} we study the homotopy limit of Diagram \ref{equation: cosimplicial diagram of Cpx of an open cover}, i.e. complexes of sheaves on the \u{C}ech nerve of an open cover, and show this is equivalent to the category of twisted complexes. In Section \ref{subsection: twisted perfect complexes} we study the homotopy limit of strict perfect complexes with respect to open covers. In Section \ref{subsection: stack of twisted complexes} we construct a presheaf of twisted perfect complexes and show that it is in fact a stack.
In Section \ref{subsection: group action} we study the complexes of sheaves on the simplicial space associated to a group acting on a space.

\section*{Acknowledgement}
We would like to thank David Ben-Zvi, Ezra Getzler, Valery Lunts, Tom Nevins, Ajay Ramadoss, Olaf Schn\"{u}rer, Junwu Tu and Shizhuo Zhang for  helpful discussions.

We would also like to thank the referee for useful comments.

\section{A review of homotopy limits in a model category}\label{section: A review of homotopy limit in a model category}
\subsection{A quick review of the Reedy model structure}\label{subsection: Reedy model structure}
In this subsection we give a quick review of Reedy model structure. For more details
 see \cite{hirschhorn2009model} Chapter 15 or \cite{riehl2014categorical} Chapter 14.

 \begin{defi}\label{defi: Reedy category}[\cite{hirschhorn2009model} Definition 15.1.2]
 A \emph{Reedy category} is a (small) category $\mathcal{C}$ equipped with two subcategories $\overrightarrow{\mathcal{C}}$ (the direct category) and $\overleftarrow{\mathcal{C}}$ (the inverse category), both of which contain all the objects of $\mathcal{C}$, in which every object can be assigned a nonnegative integer (called the degree) such that
 \begin{enumerate}
 \item Every non-identity morphism of $\overrightarrow{\mathcal{C}}$ raises degree.
 \item Every non-identity morphism of $\overleftarrow{\mathcal{C}}$ lowers degree.
 \item Every morphism $g$ in $\mathcal{C}$ has a unique factorization $g=\overrightarrow{g}\overleftarrow{g}$ where $\overrightarrow{g}$ is in $\overrightarrow{\mathcal{C}}$  and $\overleftarrow{g}$ is in $\overleftarrow{\mathcal{C}}$.
 \end{enumerate}
 \end{defi}

 We need the  concepts of latching and matching categories as follows.
 \begin{defi}\label{defi: latching and matching categories}[\cite{hirschhorn2009model} Definition 15.2.3]
 Let $\mathcal{C}$ be a Reedy category and let $\alpha$ be an object of $\mathcal{C}$.
The \emph{latching category} $\partial(\overrightarrow{\mathcal{C}}\downarrow \alpha)$ of $\mathcal{C}$ at $\alpha$ is the full subcategory of the overcategory $(\overrightarrow{\mathcal{C}}\downarrow \alpha)$ containing all the objects except the identity map of $\alpha$.
The \emph{matching category} is defined dually.
 \end{defi}

 Now let $\mathcal{C}$ be a Reedy category and $\mathcal{M}$ a category closed under limits and colimits. We consider the category $\mathcal{M}^{\mathcal{C}}$ of $\mathcal{C}$-diagrams in $\mathcal{M}$.

 \begin{defi}\label{defi: latching and matching objects}[\cite{hirschhorn2009model} Definition 15.2.5]
Let $\mathcal{C}$ be a Reedy category and $\mathcal{M}$ a category closed under limits and colimits. Let $\mathcal{X}$ be a $\mathcal{C}$-diagram in $\mathcal{M}$, i.e. $\mathcal{X}$ is a functor $\mathcal{C}\to \mathcal{M}$. Let $\alpha$ be an object of $\mathcal{C}$.
The \emph{latching object} of $\mathcal{X}$ at $\alpha$ is $L_{\alpha}\mathcal{X}=\colim_{\partial(\overrightarrow{\mathcal{C}}\downarrow \alpha)}\mathcal{X}$ and the latching map of $\mathcal{X}$ at $\alpha$ is the natural map $L_{\alpha}\mathcal{X}\to \mathcal{X}_{\alpha}$.
The \emph{matching object} $M_{\alpha}X$ is defined dually.
 \end{defi}

 Let $\mathcal{X}$ and $\mathcal{Y}$ be  $\mathcal{C}$-diagrams in $\mathcal{M}$ and $f: \mathcal{X}\to \mathcal{Y}$ be a map of $\mathcal{C}$-diagrams. It is clear that $f$ induces maps between latching and matching objects $f: L_{\alpha}\mathcal{X}\to L_{\alpha}\mathcal{Y}$ and $f: M_{\alpha}\mathcal{X}\to M_{\alpha}\mathcal{Y}$. Moreover we can define the following maps.

 \begin{defi}\label{defi: relative latching and matching maps}[\cite{hirschhorn2009model} Definition 15.3.2]
Let $\mathcal{C}$ be a Reedy category and $\mathcal{M}$ a category closed under limits and colimits. Let
$f: \mathcal{X}\to \mathcal{Y}$ be a map of $\mathcal{C}$-diagrams. Let $\alpha$ be an object of $\mathcal{C}$.
The \emph{relative latching map} of $f$ at $\alpha$ is the map $\mathcal{X}_{\alpha}\coprod_{L_{\alpha}\mathcal{X}}L_{\alpha}\mathcal{Y}\to \mathcal{Y}_{\alpha}$.
The \emph{relative matching map} is defined dually.
 \end{defi}

 \begin{defi}\label{defi: Reedy model structure}[\cite{hirschhorn2009model} Definition 15.3.3]
 Let $\mathcal{C}$ be a Reedy category and $\mathcal{M}$ a model category.  Let
$f: \mathcal{X}\to \mathcal{Y}$ be a map of $\mathcal{C}$-diagrams, then
\begin{enumerate}
\item $f$ is a \emph{Reedy weak equivalence} if for every object $\alpha$ of $\mathcal{C}$, the map $f_{\alpha}: \mathcal{X}_{\alpha}\to \mathcal{Y}_{\alpha}$ is a weak equivalence in $\mathcal{M}$.
\item  $f$ is a \emph{Reedy cofibration} if for every object $\alpha$ of $\mathcal{C}$, the  relative latching map $\mathcal{X}_{\alpha}\coprod_{L_{\alpha}\mathcal{X}}L_{\alpha}\mathcal{Y}\to \mathcal{Y}_{\alpha}$ is a cofibration in $\mathcal{M}$.
\item  $f$ is a \emph{Reedy fibration} if for every object $\alpha$ of $\mathcal{C}$, the  relative matching map $\mathcal{X}_{\alpha}\to  \mathcal{Y}_{\alpha}\times_{M_{\alpha}\mathcal{Y}} M_{\alpha}\mathcal{X}$ is a fibration in $\mathcal{M}$.
\end{enumerate}
This defines a model structure on $\mathcal{M}^{\mathcal{C}}$, called  the \emph{Reedy model category structure}.
 \end{defi}

 In particular, a $\mathcal{C}$-diagram $\mathcal{X}$ is Reedy fibrant if for every object $\alpha$ of $\mathcal{C}$, the matching map $\mathcal{X}_{\alpha}\to M_{\alpha}\mathcal{X}$ is a fibration in $\mathcal{M}$.

\subsection{A review of homotopy limits of general Reedy diagrams in a model category}\label{subsection: review of homotopy limit in general}
In   this subsection we review the construction of homotopy limit in a model category. For more details see \cite{hirschhorn2009model} Chapter 18 and 19.

Let $\mathcal{C}$ be a small category, then for any object $c$ of $\mathcal{C}$ we can consider the overcategory $(\mathcal{C}\downarrow c)$ and its nerve $N(\mathcal{C}\downarrow c)$. Moreover a morphism $\sigma: c\to c^{\prime}$ in $\mathcal{C}$ induces a functor $\sigma_*: (\mathcal{C}\downarrow c)\to (\mathcal{C}\downarrow c^{\prime})$ and hence a map between simplicial sets $N(\sigma_*): N(\mathcal{C}\downarrow c)\to N(\mathcal{C}\downarrow c^{\prime})$.
In conclusion $N(\mathcal{C}\downarrow -)$ gives a $\mathcal{C}$-diagram of simplicial sets.

Now we consider a model category $\mathcal{M}$ and let $\mathcal{X}$ be a $\mathcal{C}$-diagram in $\mathcal{M}$. We want to explicitly construct the homotopy limit $\holim \mathcal{X}$. First we introduce the following definition.

\begin{defi}[\cite{hirschhorn2009model} Definition 16.6.1]\label{defi: simplicial frame}
Let $X$ be an object in a model category $\mathcal{M}$. A \emph{simplicial frame} on $X$ is a simplicial object $\widehat{X}$ in $\mathcal{M}$ together with a weak equivalence $cs_*X\to \widehat{X}$ in the Reedy model category structure on $\mathcal{M}^{\Delta^{op}}$ such that
\begin{enumerate}
\item the induced map $X\to \widehat{X}_0$ is an isomorphism;
\item if $X$ is a fibrant object of $\mathcal{M}$, then $\widehat{X}$ is a fibrant object in the Reedy model category $\mathcal{M}^{\Delta^{op}}$.
\end{enumerate}
\end{defi}

If $\mathcal{Y}=Y_{\bullet}$ is a simplicial object in $\mathcal{M}$ and $K$ is a simplicial set, then we can naturally define the mapping object $\mathcal{Y}^K$. It is clear that the construction is functorial in both $\mathcal{Y}$ and $K$.

Now we can give the formula of homotopy limits.

\begin{defi}[\cite{hirschhorn2009model} Definition 19.1.5]\label{defi: homotopy limit explicit formula general form}
Let $\mathcal{M}$ be a framed model category and $\mathcal{C}$ be a small category. If $\mathcal{X}$ is a $\mathcal{C}$-diagram in $\mathcal{M}$, then the \emph{homotopy limit}  $\holim(\mathcal{X})$ of $\mathcal{X}$ is defined to be the equalizer of the maps
\begin{equation}\label{equation: formula of homotopy limit in general}
\prod_{c\in Ob(\mathcal{C})}(\widehat{X_{c}})^{N(\mathcal{C}\downarrow c)}\underset{\psi}{\overset{\phi}{\rightrightarrows}}\prod_{(\sigma:c\to c^{\prime})\in Mor(\mathcal{C})}(\widehat{X_{c^{\prime}}})^{N(\mathcal{C}\downarrow c)}
\end{equation}
where $\widehat{X_{c}}$ is the natural simplicial frame on $X_{c}$ and $N(\mathcal{C}\downarrow c)$ is the nerve of the over category $\mathcal{C}\downarrow c$. The projection of the map $\phi$ on the factor $\sigma: c\to c^{\prime}$ is the composition of a natural projection from the product with the map
$$
\sigma_*^{id_{N(\mathcal{C}\downarrow c)}}:(\widehat{X_{c}})^{N(\mathcal{C}\downarrow c)}\to (\widehat{X_{c^{\prime}}})^{N(\mathcal{C}\downarrow c)}
$$
and the projection of the map $\psi$ on the factor $\sigma: c\to c^{\prime}$ is the composition of a natural projection from the product with the map
$$
(id_{\widehat{X_{c^{\prime}}}})^{N(\sigma_*)}: (\widehat{X_{c^{\prime}}})^{N(\mathcal{C}\downarrow c^{\prime})}\to (\widehat{X_{c^{\prime}}})^{N(\mathcal{C}\downarrow c)}.
$$
\end{defi}

\subsection{Homotopy limit and the totalization of a cosimplicial object}\label{subsection: review of homotopy limit of cosimplicial diagram}
In this subsection we take $\mathcal{C}$ to be the cosimplicial index category $\mathbf{\Delta}$. In this case a $\mathbf{\Delta}$-diagram in $\mathcal{M}$ is exactly a cosimplicial object in $\mathcal{M}$. Let $\mathcal{X}=\{X^{\bullet}\}$ be such a cosimplicial object. By Definition \ref{defi: homotopy limit explicit formula general form} the homotopy limit of $\mathcal{X}$ has the following form
\begin{equation}\label{equation: formula of homotopy limit of cosimplicial object in general}
\prod_{[n]\in Ob(\mathbf{\Delta})}(X^n)^{N({\bDelta\downarrow} [n])}\underset{\psi}{\overset{\phi}{\rightrightarrows}}\prod_{(\sigma:[n]\to [m])}(X^m)^{N({\bDelta\downarrow} [n])}.
\end{equation}

Although it is possible to compute the homotopy limit by Equation (\ref{equation: formula of homotopy limit of cosimplicial object in general}), we have a problem that the simplicial set $N({\bDelta\downarrow} [n])$ is too big. Hence we are looking for an alternative formula of the homotopy limit.

\begin{defi}[\cite{hirschhorn2009model} Definition 18.6.3 and 19.8.1 or \cite{riehl2014categorical} Example 4.3.1 ]\label{defi: total object of cosimplicial object}
Let $\mathcal{X}=\{X^{\bullet}\}$ be   a cosimplicial object in a framed model category $\mathcal{M}$. The \emph{totalization} of $\mathcal{X}$ $\Tot(\mathcal{X})$ is the equalizer of the maps
\begin{equation}\label{equation: formula of totalization in general}
\prod_{[n]\in Ob(\mathbf{\Delta})}(X^n)^{\Delta[n]}\underset{\psi}{\overset{\phi}{\rightrightarrows}}\prod_{(\sigma:[n]\to [m])}(X^n)^{\Delta[n]}
\end{equation}
where the projection of the map $\phi$ on the factor $\sigma: [n]\to[m]$ is the composition of the natural projection from the product with the map
$
\Delta_n(\sigma_*): (X^n)^{\Delta[n]}\to (X^m)^{\Delta[n]}
$
and the projection of the map $\psi$ on the factor $\sigma: [n]\to[m]$ is the composition of the natural projection from the product with the map
$
\sigma^*(X^m): (X^m)^{\Delta[m]}\to (X^m)^{\Delta[n]}.
$
\end{defi}

We want to seek  relations between totalization and homotopy limit. For Reedy fibrant cosimplicial objects we have the following theorem.

\begin{thm}\label{thm: totalization and homotopy limit are weakly equi for fibrant object}
Let $\mathcal{X}=\{X^{\bullet}\}$ be a \emph{Reedy fibrant}    cosimplicial object in a framed model category $\mathcal{M}$. Then $\Tot(\mathcal{X})$ is naturally weakly equivalent to $\holim(\mathcal{X})$.
\end{thm}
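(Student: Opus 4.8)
The plan is to recognize both constructions as weighted limits (ends) of the simplicially framed diagram $[n]\mapsto\widehat{X^n}$ against two different cosimplicial simplicial sets, and then to compare those two ``weights''. Indeed, the equalizer in Definition~\ref{defi: total object of cosimplicial object} is precisely the end $\int_{[n]\in\bDelta}(\widehat{X^n})^{\Delta[n]}$, while the equalizer in Definition~\ref{defi: homotopy limit explicit formula general form}, specialized to $\mathcal{C}=\bDelta$, is the end $\int_{[n]\in\bDelta}(\widehat{X^n})^{N({\bDelta\downarrow}[n])}$. So $\Tot(\mathcal{X})$ is the weighted limit of $\widehat{X^{\bullet}}$ with weight $\Delta[-]$ and $\holim(\mathcal{X})$ the weighted limit with weight $N({\bDelta\downarrow}-)$, and the theorem amounts to a statement about these two weights.

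Next I would construct the natural comparison map. The ``last vertex'' rule $\bigl({\bDelta\downarrow}[n]\bigr)\to[n]$, sending $(\phi\colon[k]\to[n])$ to $\phi(k)\in[n]$, is a functor, and it is natural in $[n]$; passing to nerves it yields a natural transformation of cosimplicial simplicial sets $\lambda\colon N({\bDelta\downarrow}-)\to\Delta[-]$. Because the framed cotensor $(-)^{(-)}$ is contravariant in its simplicial-set variable, $\lambda$ induces level-wise maps $(\widehat{X^n})^{\Delta[n]}\to(\widehat{X^n})^{N({\bDelta\downarrow}[n])}$ compatible with the two equalizer diagrams, hence a natural map $\Tot(\mathcal{X})\to\holim(\mathcal{X})$. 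This is the map I claim is a weak equivalence.

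The substance is then a homotopy-invariance statement for weighted limits. Both weights are Reedy cofibrant cosimplicial simplicial sets --- for $\Delta[-]$ because its latching map at $[n]$ is $\partial\Delta[n]\hookrightarrow\Delta[n]$, and for $N({\bDelta\downarrow}-)$ because this is the cofibrant weight underlying the Bousfield--Kan homotopy-limit formula (\cite{hirschhorn2009model}). Moreover each component $\lambda_{[n]}\colon N({\bDelta\downarrow}[n])\to\Delta[n]$ is a weak equivalence, since both spaces are contractible (${\bDelta\downarrow}[n]$ has the terminal object $\mathrm{id}_{[n]}$), so $\lambda$ is a Reedy weak equivalence between Reedy cofibrant objects. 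The key input is that, for a fixed Reedy fibrant cosimplicial object $\mathcal{X}$ in the framed model category $\mathcal{M}$, the functor $W\mapsto\int_{[n]\in\bDelta}(\widehat{X^n})^{W^n}$ carries Reedy weak equivalences between Reedy cofibrant cosimplicial simplicial sets to weak equivalences in $\mathcal{M}$; equivalently, this two-variable cotensor is a right Quillen bifunctor for the two Reedy structures, and the homotopy-invariance then follows from Ken Brown's lemma. Applying this to $\lambda$ shows $\Tot(\mathcal{X})\to\holim(\mathcal{X})$ is a weak equivalence, and naturality in $\mathcal{X}$ is immediate from the construction.

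I expect the last step to be the main obstacle: making precise, and verifying, that the bifunctor $(W,\mathcal{Y})\mapsto\int_{[n]}(Y^n)^{W^n}$ is a right Quillen bifunctor, i.e.\ that the maps it induces on cotensors are fibrations in $\mathcal{M}$, acyclic whenever one of the two inputs is. This is exactly where the framing axioms on $\mathcal{M}$ are used. As a shortcut, after identifying the two equalizers above with Hirschhorn's constructions of $\Tot$ and $\holim$ over $\bDelta$, one may instead cite the corresponding comparison theorem in \cite{hirschhorn2009model} directly.
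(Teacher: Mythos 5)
Your argument is correct and is essentially the paper's own proof: the paper simply cites \cite{hirschhorn2009model} Theorem 19.8.4, and the weighted-limit comparison you outline --- the last-vertex map $N({\bDelta\downarrow} -)\to\Delta[-]$ between two Reedy cofibrant weights, together with homotopy invariance of the end $\int_{[n]}(\widehat{X^n})^{W^n}$ against a Reedy fibrant diagram --- is precisely the content of that theorem and its supporting lemmas. Your closing remark that one may instead cite \cite{hirschhorn2009model} directly is exactly what the paper does.
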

\begin{proof}
\cite{hirschhorn2009model} Theorem 19.8.4.
\end{proof}

\begin{rmk}
In fact in the original statement of \cite{hirschhorn2009model} Theorem 19.8.4 there is no requirement of Reedy fibrancy. However it seems to be a typo.
\end{rmk}

Theorem \ref{thm: totalization and homotopy limit are weakly equi for fibrant object} tells us that in good cases we can use the totalization to compute the homotopy limit, which will dramatically simplify the computation.

\section{The homotopy limits of cosimplicial objects in $\dgCat_{\DK}$}\label{section: The homotopy limits of cosimplicial objects in dgCat}
In this section we take $\mathcal{M}$ to be the $\dgCat_{\DK}$, i.e. the category of (small) dg-categories equipped with the Dwyer-Kan model structure \cite{tabuada2005structure}.

\begin{defi}\label{defi: DK model structure}
Let dgCat be the category of small dg-categories. There is a model structure on dgCat, called the \emph{Dwyer-Kan (DK) model structure}, where a dg-functor $F: \mathcal{B}\to \mathcal{C}$ is
\begin{itemize}
\item a weak equivalence if
\begin{enumerate}
\item for all objects $x, y\in \mathcal{B}$, the component $F_{x,y}: \mathcal{B}(x,y)\to \mathcal{C}(F(x),F(y))$ is a quasi-isomorphism of complexes;
\item the induced functor $H^0(F): H^0(\mathcal{B}) \to H^0(\mathcal{C})$p is an equivalence of categories.
\end{enumerate}
\item a fibration if
\begin{enumerate}
\item for all objects $x, y\in \mathcal{B}$, the component $F_{x,y}$ is a degreewise surjection;
\item for each morphism $\theta: F(x)\to z$  which is invertible in $H^0(\mathcal{C})$, it could be lifted to an morphism $\widetilde{\theta}$ which is invertible in $H^0(\mathcal{B})$.
\end{enumerate}
\end{itemize}
We denote dgCat with the Dwyer-Kan model structure by $\text{dgCat}_{\text{DK}}$.
\end{defi}

We will give an explicit construction of the totalization and homotopy limit of a cosimplicial object in $\dgCat_{\DK}$.

\begin{rmk}
There is another model structure on dgCat introduced in \cite{tabuada2005invariants} called the \emph{Morita model structure}. This is the left Bousfield localization of the DK model structure along functors which induce equivalences on the derived categories. Here the derived category of a dg-category $\cat D$ is the category of functors from the opposite of $\cat D$ to chain complexes, localized at objectwise quasi-isomorphisms.

It is important to note that any computation of homotopy limits in the DK model structure also allows the computation of homotopy limits in the Morita model structure, see \cite{holstein2015moritacohomology} \mbox{Lemma 1}.
 \end{rmk}

\subsection{The simplicial resolution in the category of dg-categories}\label{subsection: simplicial resolution in dgCat}
In this subsection we discuss the simplicial resolution in the category of dg-categories, which is introduced in \cite{holstein2014properness}. First we introduce some notations and definitions.

Let $\mathcal{B}$ be a (small) dg-category and $S$ be a (fixed) set of index and $\mathcal{E}=\{E_i|i\in S\}$ and $\mathcal{F}=\{F_i|i\in S\}$ be two collections of objects of $\mathcal{B}$.  We define
\begin{equation}
C^p(\mathcal{B},\Hom^q(\mathcal{E},\mathcal{F}))=\prod_{|I|=p+1,\\ I=\{i_0\ldots i_p\}}\Hom^q_{\mathcal{B}}(E_{i_p},F_{i_0}).
\end{equation}
For an index set $I=\{i_0,\ldots i_p\}$ (the $i_j$'s may be repeated), an element $\phi^{p,q}_I=\phi^{p,q}_{i_0\ldots i_p}$ of $C^p(\mathcal{B},\Hom^q(\mathcal{E},\mathcal{F}))$ is an element in $\Hom^q_{\mathcal{B}}(E_{i_p},F_{i_0})$.
Here we call $p$ the simplicial degree of $\phi$, call $q$ the $\mathcal{B}$ degree of $\phi$ and call $|\phi|=p+q$ the total degree of $\phi$.

\begin{defi}\label{defi: differential delta and multiplication in simplicial dg-cat}
The differential on $\phi\in \prod_{p+q=k}C^p(\mathcal{B},\Hom^q(\mathcal{E},\mathcal{F}))$ is defined by the following formula.
\begin{equation}\label{equation: D in simplicial dg}
(\D\phi)^{p,q+1}_{i_0\ldots i_p}:=(-1)^pd_{\mathcal{B}}(\phi^{p,q}_{i_0\ldots i_p})+\sum_{j=1}^{p-1}(-1)^j\phi^{p-1,q+1}_{i_0\ldots \widehat{i_j}\ldots i_p}.
\end{equation}
It is easy to check that $\D\circ \D=0$.

Moreover, we define the \emph{shuffled multiplication} of two elements $\phi\in C^p(\mathcal{B},\Hom^q(\mathcal{F},\mathcal{G}))$ and $\eta\in C^r(\mathcal{B},\Hom^s(\mathcal{E},\mathcal{F}))$ as
\begin{equation}\label{equation: shuffled multiplication in simplicial dg}
(\phi\cdot \eta)^{p+r,q+s}_{i_0\ldots i_{p+r}}=(-1)^{qr}\phi^{p,q}_{i_0\ldots i_p}\circ \eta^{r,s}_{i_p\ldots i_{p+r}}
\end{equation}
From now on the notation "$\cdot$" will be preserved for the shuffled multiplication.

We can also check that the differential and multiplication satisfy the Leibniz rule, i.e.
$$
\D(\phi\cdot \eta)=(\D\phi)\cdot \eta+(-1)^{|\phi|}\phi\cdot (\D\eta).
$$
\end{defi}

\begin{defi}\label{defi: n simplex in dg-category}[See \cite{holstein2014properness} Definition 3.2]
Let $\mathcal{B}$ be a (small) dg-category. Then $\Delta_n(\mathcal{B})$ is a dg-category with objects given by pairs $(\mathcal{E}, \phi) $
where $\mathcal{E}$ is a collection   $E_0,E_1,\ldots, E_n$ of objects in $\mathcal{B}$  and $\phi$ is a collection $\phi^{k,1-k}_I\in \Hom^{1-k}_{\mathcal{B}}(E_{i_k},E_{i_0})$ for all multi-indices $I=(i_0,\ldots ,i_k)$, $0\leq i_0\leq \ldots \leq i_k\leq n$. These pairs must satisfy the Maurer-Cartan equation
\begin{equation}
\D\phi+\phi\cdot\phi=0.
\end{equation}
More precisely, for any $I=\{i_0\ldots i_k\}$, we have
\begin{equation}\label{equation: MC for simplicial resolution explicit}
(-1)^kd_{\mathcal{B}}(\phi^{k,1-k}_{i_0\ldots i_k})+\sum_{j=1}^{k-1}(-1)^j\phi^{k-1,2-k}_{i_0\ldots \widehat{i_j}\ldots i_k}+\sum_{j=1}^{k-1}(-1)^{(1-j)(k-j)}\phi^{j,1-j}_{i_0\ldots i_j}\circ \phi^{k-j,1-k+j}_{i_j\ldots i_k}=0.
\end{equation}

Moreover we require that all $\phi^{1,0}_{ij}\in \Hom^0_{\mathcal{B}}(E_i,E_j)$ (including $\phi^{1,0}_{ii}$) are homotopically invertible and in particular $\phi^{1,0}_{ii}=id$. On the other hand we require all   $\phi^{k,1-k}_{i_0\ldots i_k}$'s equal to $0$ if they have   repeated indices and  $k\geq 2$.

Morphisms from $(\mathcal{E}, \phi)$ to $(\mathcal{F}, \psi)$ are given as follows.
$$
\Hom^m_{\Delta_n(\mathcal{B})}((\mathcal{E}, \phi),(\mathcal{F}, \psi))=\prod_{k}\prod_{\substack{(i_0\ldots i_k)\\0\leq i_0\leq\ldots \leq i_k\leq n}}\Hom^{m-k}_{\mathcal{B}}(E_{i_k},F_{i_0})
$$
and the differential $d$ on a degree $m$ morphism $\theta$ is given by
\begin{equation}\label{equation: differential on morphisms of simplicial resolution}
d\theta=\D\theta+\psi\cdot \theta-(-1)^m\theta\cdot \phi.
\end{equation}
\end{defi}

\begin{rmk}
Definition \ref{defi: n simplex in dg-category} differs   from the simplicial resolution in \cite{holstein2014properness} in the sense that in Definition \ref{defi: n simplex in dg-category} we allow the $\phi^{k,1-k}$'s  with repeated subscripts, while the simplicial resolution in \cite{holstein2014properness} requires strictly increasing subscripts. However, we demand that for $k\geq 2$,  all $\phi^{k,1-k}$'s  with repeated subscripts are $0$, thus the two definition are given by exactly the same data. The differential also agrees.
Moreover, in Definition \ref{defi: n simplex in dg-category} we also make the $\pm$ sign coincide with those in \cite{o1981trace}
\end{rmk}

\begin{eg}\label{eg: Delta1 of a dg-category}
An object in $\Delta_1(\mathcal{B})$ consists of two object $E_0$ and $E_1$ of $\mathcal{B}$, a degree $0$  morphism $\phi^{1,0}_{01}: E_0\to E_1$ which is invertible in the homotopy category together with the $\phi^{1,0}_{00}$ and $\phi^{1,0}_{11}$.

A degree $m$ morphism from $(\mathcal{E},\phi)$ to $(\mathcal{F},\psi)$ is given by a collection of morphisms $\theta^{0,m}_{0}$, $\theta^{0,m}_1$, $\theta^{1,m-1}_{01}$ and the degenerate ones $\theta^{1,m-1}_{00}$ etc. The differential on $\theta$ is given by Equation (\ref{equation: differential on morphisms of simplicial resolution}). In particular the differential of the non-degenerate $\theta$'s is given by
$$
D(\theta^{0,m}_{0}, \theta^{0,m}_1, \theta^{1,m-1}_{01})=(d\theta^{0,m}_{0}, d\theta^{0,m}_1, d\theta^{1,m-1}_{01}+\psi^{1,0}_{01}\theta^{0,m}_1-(-1)^m\theta^{0,m}_0 \phi^{1,0}_{01})
$$
\end{eg}

\begin{rmk}
The non-degenerate part of $\Delta_1(\mathcal{B})$ agrees with the path object  $P(\mathcal{B})$ as constructed in \cite{tabuada2010homotopy} Section 3. The only differences is that we include two morphisms $\phi^{1,0}_{00}$ and $\phi^{1,0}_{11}$.
\end{rmk}

\begin{defi}\label{defi: simplicial maps between the n-simplexes in a dg-category}
For any   map $\sigma: [n]\to [m]$ in the simplicial category $\bf{\Delta}$, we can define the induced map $\sigma^*: \Delta_m(\mathcal{B})\to \Delta_n(\mathcal{B})$. Actually for an object $(\mathcal{E}=\{E_0,\ldots, E_m\},\phi)$ of $\Delta_m(\mathcal{B})$, we define
$$
\sigma_*(\mathcal{E})=\{E_{\sigma(0)},\ldots, E_{\sigma(n)}\}
$$
and
$$
(\sigma_*(\phi))^{k,1-k}_{i_0\ldots i_k}=\phi^{k,1-k}_{\sigma(i_0)\ldots \sigma(i_k)}.
$$

$\sigma_*$ of morphisms are defined similarly.
\end{defi}

Therefore $\Delta_{\bullet}(\mathcal{B})$ is a cosimplicial object in $\dgCat$. In \cite{holstein2014properness} it has been proved that  $\Delta_{\bullet}(\mathcal{B})$ has the following properties.

\begin{prop}[\cite{holstein2014properness} Proposition 3.9]\label{prop: inclusion of the constant simplicial is a weak equivalence}
The inclusion from the constant simplicial dg-category $c\mathcal{B}$ to $\Delta_{\bullet}(\mathcal{B})$ is a levelwise weak equivalence.
\end{prop}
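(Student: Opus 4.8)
By definition, the claim is that for each $n\ge 0$ the level-$n$ component $\iota_n\colon\mathcal B\to\Delta_n(\mathcal B)$ of the map $c\mathcal B\to\Delta_\bullet(\mathcal B)$ is a weak equivalence in $\dgCat_{\DK}$. Explicitly, $\iota_n$ sends an object $E$ to the ``constant $n$-simplex'' $(\mathcal E,\phi)$ with $\mathcal E=(E,\dots,E)$, every transition map $\phi^{1,0}_{ij}=\mathrm{id}_E$, and all $\phi^{k,1-k}=0$ for $k\ge 2$ (which solves the Maurer-Cartan equation), and on morphisms it sends $f$ to the constant morphism with $\theta^{0}_i=f$ for all $i$ and $\theta^{k}=0$ for $k\ge 1$. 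The plan is to verify the two conditions of Definition \ref{defi: DK model structure} in turn.

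\emph{Quasi-full-faithfulness.} Fix $x,y\in\mathcal B$ and examine $\Hom_{\Delta_n(\mathcal B)}(\iota_n x,\iota_n y)$. Because both objects are constant $n$-simplices with identity transitions, the differential $\D\theta+\psi\cdot\theta-(-1)^{|\theta|}\theta\cdot\phi$ of Equation \eqref{equation: differential on morphisms of simplicial resolution} collapses: the shuffled products with the identity transitions of $\psi$ and of $\phi$ supply exactly the two outer face operators on multi-indices (deletion of the first, resp.\ the last, index, with the right signs), while $\D$ supplies $d_{\mathcal B}$ together with the inner faces of Equation \eqref{equation: D in simplicial dg}; together these are the full alternating sum of faces. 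Hence $\Hom_{\Delta_n(\mathcal B)}(\iota_n x,\iota_n y)$ is naturally the total complex of $\Hom^\bullet_{\mathcal B}(x,y)$ tensored with the (non-normalized) simplicial cochain complex $C^\bullet(\Delta[n])$ of the nerve $N([n])\cong\Delta[n]$, and under this identification $(\iota_n)_{x,y}$ is the inclusion of $\Hom^\bullet_{\mathcal B}(x,y)$ as the constant cochains, i.e.\ along the unit $k\hookrightarrow C^0(\Delta[n])$. Since $\Delta[n]$ is connected and contractible, $k\hookrightarrow C^\bullet(\Delta[n])$ is a quasi-isomorphism, hence so is $(\iota_n)_{x,y}$.

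\emph{Essential surjectivity on $H^0$.} Given an object $(\mathcal E,\phi)$ of $\Delta_n(\mathcal B)$, with $E_0$ its object at the vertex $0$, I would exhibit an isomorphism in $H^0(\Delta_n(\mathcal B))$ between $(\mathcal E,\phi)$ and $\iota_n(E_0)$. Take the degree-$0$ morphism $\Theta\colon(\mathcal E,\phi)\to\iota_n(E_0)$ whose leading component is $\Theta^{0}_i=\phi^{1,0}_{0i}\colon E_i\to E_0$ and whose higher components are $\Theta^{k}_{i_0\dots i_k}=\pm\,\phi^{k+1,-k}_{0i_0\dots i_k}$, with signs dictated by Definition \ref{defi: n simplex in dg-category}. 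The Maurer-Cartan equations for $\phi$ evaluated on the multi-indices that begin with the vertex $0$ are precisely the identities asserting $\D\Theta+\psi\cdot\Theta-\Theta\cdot\phi=0$, so $\Theta$ is a cycle. Since each $\phi^{1,0}_{0i}$ is homotopically invertible by hypothesis, $\Theta$ is an isomorphism in $H^0(\Delta_n(\mathcal B))$: one builds a homotopy inverse by induction on simplicial degree, the obstruction at each stage vanishing because the leading term is a homotopy equivalence (equivalently, filtering the Hom-complexes out of $(\mathcal E,\phi)$ by simplicial degree, postcomposition with $\Theta$ is, on the associated graded, postcomposition with the homotopically invertible maps $\phi^{1,0}_{0\bullet}$). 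Hence $H^0(\iota_n)$ is essentially surjective (in fact an equivalence), both conditions of Definition \ref{defi: DK model structure} hold, and $\iota_n$ is a weak equivalence. Since this is compatible with the simplicial operators as $n$ varies, $c\mathcal B\to\Delta_\bullet(\mathcal B)$ is a levelwise weak equivalence.

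The step I expect to require real care is the sign bookkeeping in both parts: checking that the three pieces of the morphism differential genuinely reassemble into the full simplicial coboundary on constant $n$-simplices, and that the Maurer-Cartan relations reproduce $\D\Theta+\psi\cdot\Theta-\Theta\cdot\phi=0$ with the Koszul-sign conventions of Equations \eqref{equation: D in simplicial dg} and \eqref{equation: shuffled multiplication in simplicial dg}. Everything else is formal.
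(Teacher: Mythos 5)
The paper does not prove this statement itself but only cites \cite{holstein2014properness} Proposition 3.9; your argument is correct and is essentially the proof given there — quasi-full-faithfulness on constant objects via the contractibility of the (non-normalized) cochain complex of $\Delta[n]$ (strictly speaking the Hom-complex is the internal Hom $\underline{\Hom}(C_\bullet(\Delta[n]),\Hom_{\mathcal B}(x,y))$, a product over simplicial degree, but the augmentation $C_\bullet(\Delta[n])\to k$ is a chain homotopy equivalence so this is harmless), and $H^0$-essential surjectivity by assembling the Maurer--Cartan data on multi-indices starting at $0$ into a closed degree-$0$ morphism to a constant object, invertible because its leading term is homotopy invertible and the simplicial-degree filtration of the Hom-complexes is complete. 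Apart from the sign bookkeeping you already flag, there is no gap.
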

\begin{proof}
See \cite{holstein2014properness} Proposition 3.9.
\end{proof}

The next important property of  $\Delta_{\bullet}(\mathcal{B})$ is the Reedy fibrancy.

\begin{prop}[\cite{holstein2014properness} Proposition 3.10]\label{prop: simplicial resolution is Reedy fibrant}
The simplicial dg-category $\Delta_{\bullet}(\mathcal{B})$ is Reedy fibrant.
\end{prop}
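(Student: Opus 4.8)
The plan is to verify the defining condition of Reedy fibrancy directly: for each $n\geq 0$ we must show that the matching map $\Delta_n(\mathcal{B})\to M_{[n]}\Delta_{\bullet}(\mathcal{B})$ is a fibration in $\dgCat_{\DK}$, i.e. satisfies the two conditions of Definition~\ref{defi: DK model structure} (degreewise surjectivity on Hom-complexes, and lifting of homotopy-invertible morphisms). First I would identify the matching object explicitly. Recall that for the cosimplicial index category $\bDelta$, the matching category at $[n]$ is generated by the codegeneracy maps $[n]\to [n-1]$, so $M_{[n]}\Delta_{\bullet}(\mathcal{B})$ is the limit $\lim \Delta_{n-1}(\mathcal{B})$ over this matching category, and the matching map sends an object $(\mathcal{E},\phi)$ of $\Delta_n(\mathcal{B})$ to the compatible family of its images under the codegeneracies $s^j\colon [n]\to[n-1]$. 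Concretely, an object of the matching object is a collection of objects $E_0,\dots,E_n$ and components $\phi^{k,1-k}_{i_0\dots i_k}$ indexed only by multi-indices with at least one repetition (these are the ones "seen" by the codegeneracies), subject to the compatibility that any such component is determined by its image under each degeneracy; and a morphism is the analogous truncated collection of $\theta$-components with repeated indices.

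The key computational step is then to unwind what the matching map does on objects, on Hom-complexes, and on homotopy categories. On objects, the matching map simply forgets all components $\phi^{k,1-k}$ with strictly increasing indices for $k\geq 1$ — but by the normalization conditions in Definition~\ref{defi: n simplex in dg-category}, the components with repeated indices for $k\geq 2$ all vanish, and the only non-trivial data among the repeated-index components are the degenerate $\phi^{1,0}_{ii}=\mathrm{id}$. So the matching object is essentially the dg-category of collections $(E_0,\dots,E_n)$ with no gluing data, and the matching map forgets $\phi^{1,0}_{i<j}$ and the higher $\phi^{k,1-k}_{i_0<\dots<i_k}$. To check degreewise surjectivity of the matching map on Hom-complexes: a morphism $\theta$ in $\Delta_n(\mathcal{B})$ has components $\theta^{k,m-k}_{i_0\dots i_k}$ for all multi-indices, and the matching map retains only those with repeated indices. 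Since the defining data of $\Delta_n(\mathcal{B})$ imposes no vanishing condition on the $\theta$-components (unlike the $\phi$'s — morphisms are unrestricted), one can always lift a family of repeated-index components to a full morphism simply by setting the strictly-increasing components to zero; this gives a section at the level of graded vector spaces, establishing surjectivity in each degree. For the second fibration condition, one lifts a homotopy-invertible morphism in the matching object by the same zero-extension: I would need to check that if the repeated-index part is homotopy-invertible in $H^0$ of the matching object, the zero-extension is homotopy-invertible in $H^0(\Delta_n(\mathcal{B}))$, which follows because homotopy-invertibility of a morphism in $\Delta_n(\mathcal{B})$ is detected by the $\phi^{1,0}_{ii}$-diagonal components (an argument already implicit in the proof of Proposition~\ref{prop: inclusion of the constant simplicial is a weak equivalence}).

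The main obstacle I anticipate is bookkeeping rather than conceptual: correctly describing the matching category of $\bDelta$ at $[n]$ and the resulting limit, and keeping track of the signs and the normalization conventions (repeated indices, $\phi^{1,0}_{ii}=\mathrm{id}$, vanishing of higher repeated-index components) so that the claimed set-theoretic section of the matching map is actually a well-defined dg-functor-level section in each degree and compatible with the differential on morphisms given by Equation~(\ref{equation: differential on morphisms of simplicial resolution}). In particular I would need to confirm that zero-extending a morphism is compatible with $d\theta=\D\theta+\psi\cdot\theta-(-1)^m\theta\cdot\phi$, i.e. that the differential of a zero-extended morphism has vanishing strictly-increasing components — this uses crucially that the $\phi$'s and $\psi$'s themselves vanish on repeated higher indices, so the shuffled products $\psi\cdot\theta$ and $\theta\cdot\phi$ cannot produce new strictly-increasing components out of repeated ones. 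Once this is checked, degreewise surjectivity and the lifting property are immediate, and since the argument works for every $n$, $\Delta_{\bullet}(\mathcal{B})$ is Reedy fibrant. (Alternatively, one could simply cite \cite{holstein2014properness} Proposition 3.10 and remark that the slight change of conventions in Definition~\ref{defi: n simplex in dg-category} does not affect the argument, since the two formulations carry the same data.)
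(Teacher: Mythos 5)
There is a genuine gap, and it starts at the very first step: you have identified the wrong matching object. $\Delta_{\bullet}(\mathcal{B})$ is a \emph{simplicial} object of $\dgCat$, i.e.\ a functor $\Delta^{op}\to\dgCat$ (a map $\sigma\colon[n]\to[m]$ in $\mathbf{\Delta}$ induces $\sigma^{*}\colon\Delta_{m}(\mathcal{B})\to\Delta_{n}(\mathcal{B})$, see Definition \ref{defi: simplicial maps between the n-simplexes in a dg-category}, and Reedy fibrancy is meant in $\mathcal{M}^{\Delta^{op}}$ as in Definition \ref{defi: simplicial frame}). For the Reedy structure on $\Delta^{op}$ the inverse subcategory is the opposite of the \emph{injections} of $\mathbf{\Delta}$, so the matching object at $[n]$ is the limit of $\Delta_{k}(\mathcal{B})$ over the proper injections $[k]\hookrightarrow[n]$ — a model for $\mathcal{B}^{\partial\Delta[n]}$ — and the matching map is induced by the \emph{face} maps, not by codegeneracies $[n]\to[n-1]$ (which under contravariance induce maps \emph{into} $\Delta_{n}(\mathcal{B})$ and feed the latching object instead). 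Concretely, at $n=1$ the matching map is $(d_{0}^{*},d_{1}^{*})\colon\Delta_{1}(\mathcal{B})\to\mathcal{B}\times\mathcal{B}$; this is exactly the map that the remark following Corollary \ref{coro: simplicial resolution in dg-cat is a simplicial frame} uses to show that $\Omega_{\mathrm{poly}}(\Delta^{\bullet})\otimes\mathcal{B}$ is \emph{not} Reedy fibrant. Your description of the matching object as the repeated-index (degenerate) part trivializes the statement; in reality the matching map forgets precisely the components $\phi^{k,1-k}_{I}$ and $\theta^{k,m-k}_{I}$ whose index set is all of $\{0,\dots,n\}$.

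With the correct matching object your argument no longer closes. Degreewise surjectivity on Hom-complexes (F1) does still follow by zero-extension, since F1 only asks for surjectivity of graded vector spaces. But F2 is where the content lies, and two essential steps are missing. First, F2 requires producing an object $c'$ of $\Delta_{n}(\mathcal{B})$ lying over a prescribed object of the matching object, i.e.\ constructing a top component $\phi^{n,1-n}_{0\dots n}$ solving the remaining instance of the Maurer--Cartan equation (\ref{equation: MC for simplicial resolution explicit}) and compatible with a given homotopy equivalence; your proposal never constructs this object. Second, the zero-extension of a morphism of the matching object is generally not closed: by Equation (\ref{equation: differential on morphisms of simplicial resolution}) the full-support component of $d\theta$ contains $\sum_{j}(-1)^{j}\theta^{n-1,m+1-n}_{0\dots\widehat{j}\dots n}$ together with shuffle products of proper-support components of $\theta$ with $\phi$ and $\psi$, none of which need vanish; so the zero-extension of a degree-zero cocycle that is invertible in $H^{0}$ of the matching object need not even represent a class in $H^{0}(\Delta_{n}(\mathcal{B}))$. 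These are exactly the points at which the naive polynomial-forms resolution fails and which \cite{holstein2014properness} Proposition 3.10 handles; the paper's own proof is nothing more than that citation, so your closing alternative (cite loc.\ cit.\ and observe that the change of conventions carries the same data) is the correct route, but the direct argument as written does not establish the result.
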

\begin{proof}
See \cite{holstein2014properness} Proposition 3.10.
\end{proof}

\begin{coro}\label{coro: simplicial resolution in dg-cat is a simplicial frame}
For a dg-category $\mathcal{B}$, the simplicial resolution $\Delta_{\bullet}(\mathcal{B})$ in Definition \ref{defi: n simplex in dg-category} give a simplicial frame of $B$ in $\dgCat_{\DK}$.
\end{coro}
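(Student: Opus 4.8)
The plan is to assemble the statement from the two propositions just quoted together with the definition of a simplicial frame. Recall (Definition~\ref{defi: simplicial frame}) that to exhibit $\Delta_{\bullet}(\mathcal{B})$ as a simplicial frame on $\mathcal{B}$ in $\dgCat_{\DK}$ we must produce (i) a weak equivalence $cs_*\mathcal{B}\to \Delta_{\bullet}(\mathcal{B})$ in the Reedy model structure on $\dgCat_{\DK}^{\Delta^{op}}$, (ii) the condition that the induced map $\mathcal{B}\to \Delta_0(\mathcal{B})$ is an isomorphism, and (iii) the condition that, since every object of $\dgCat_{\DK}$ is fibrant, $\Delta_{\bullet}(\mathcal{B})$ is Reedy fibrant. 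So the proof is really just a matter of checking that the three inputs line up.

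First I would produce the comparison map: the inclusion $c\mathcal{B}\hookrightarrow \Delta_{\bullet}(\mathcal{B})$ of the constant simplicial dg-category, sending an object $E$ to the pair $(\{E,\dots,E\},\phi)$ with $\phi^{1,0}_{ii}=\mathrm{id}$ and all other $\phi$-components zero (this is a legitimate Maurer--Cartan element precisely because we allow repeated indices but force them to vanish for $k\ge 2$, as noted in the remark after Definition~\ref{defi: n simplex in dg-category}), and acting on morphisms in the evident diagonal way. Then Proposition~\ref{prop: inclusion of the constant simplicial is a weak equivalence} says this map is a levelwise weak equivalence, which in the Reedy model structure on $\dgCat_{\DK}^{\Delta^{op}}$ is exactly a Reedy weak equivalence; this gives condition (i). Condition (ii) is immediate by unwinding Definition~\ref{defi: n simplex in dg-category} at $n=0$: an object of $\Delta_0(\mathcal{B})$ is a single object $E_0$ together with the forced datum $\phi^{1,0}_{00}=\mathrm{id}$, and a morphism is a single $\theta^{0,m}_{0}\in\Hom^m_{\mathcal{B}}(E_0,F_0)$ with differential $d\theta=d_{\mathcal{B}}\theta$, so $\Delta_0(\mathcal{B})\cong\mathcal{B}$ on the nose, and the map $\mathcal{B}\to\Delta_0(\mathcal{B})$ induced by the frame is this identification. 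Condition (iii) is Proposition~\ref{prop: simplicial resolution is Reedy fibrant} together with the observation that in $\dgCat_{\DK}$ every object is fibrant, so ``$\mathcal{B}$ fibrant implies $\Delta_{\bullet}(\mathcal{B})$ Reedy fibrant'' holds vacuously (or rather, unconditionally).

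There is essentially no hard analytic content here; the only point requiring a little care is bookkeeping — verifying that the cosimplicial structure maps of Definition~\ref{defi: simplicial maps between the n-simplexes in a dg-category} are compatible with the inclusion $c\mathcal{B}\hookrightarrow\Delta_{\bullet}(\mathcal{B})$, so that the inclusion really is a map of simplicial objects and not merely a levelwise collection of functors, and checking that the sign conventions in Definition~\ref{defi: differential delta and multiplication in simplicial dg-cat} are consistent with the claim in Definition~\ref{defi: simplicial frame} that $\widehat{X}_0\cong X$. Both are routine once one writes down $\sigma_*$ on a constant object: since $\sigma_*(\mathcal{E})$ just repeats $E$ and $\sigma_*(\phi)$ repeats the zero/identity data, the diagram commutes strictly. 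I would therefore present the proof as: the inclusion $c\mathcal{B}\to\Delta_{\bullet}(\mathcal{B})$ of Proposition~\ref{prop: inclusion of the constant simplicial is a weak equivalence} is a Reedy weak equivalence by that proposition; the isomorphism $\mathcal{B}\cong\Delta_0(\mathcal{B})$ is immediate; and Reedy fibrancy is Proposition~\ref{prop: simplicial resolution is Reedy fibrant}. The main ``obstacle'' is purely expository — making sure the reader sees that Definition~\ref{defi: simplicial frame}(2) is automatic because all objects of $\dgCat_{\DK}$ are fibrant.
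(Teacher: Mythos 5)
Your proof is correct and follows the same route as the paper: the paper's proof is exactly the one-line assembly of Definition \ref{defi: simplicial frame} with Propositions \ref{prop: inclusion of the constant simplicial is a weak equivalence} and \ref{prop: simplicial resolution is Reedy fibrant}, and your additional checks (the identification $\Delta_0(\mathcal{B})\cong\mathcal{B}$, the fibrancy of every object of $\dgCat_{\DK}$) are the correct details the paper leaves implicit.
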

\begin{proof}
It is a direct corollary of Definition \ref{defi: simplicial frame}, Proposition \ref{prop: inclusion of the constant simplicial is a weak equivalence} and Proposition \ref{prop: simplicial resolution is Reedy fibrant}.
\end{proof}

\begin{rmk}A related concept is the category $dga_{\geq 0}$ consisting of dg-algebras of nonnegative degree and there is a projective model structure on $dga_{\geq 0}$ such that the inclusion $dga_{\geq 0}\hookrightarrow dgCat$  preserves the model structure.  For any dg-algebra $B$ of nonnegative degree there is a simplicial resolution of $B$ given by $\Omega_{\text{poly}}(\Delta^{\bullet})\otimes B$, where $\Omega_{\text{poly}}(\Delta^n)$ is the ag-algebra of the polynomial differential forms on the geometric simplex $\Delta^n$.

However, for a dg-category $\mathcal{B}$, $\Omega_{\text{poly}}(\Delta^{\bullet})\otimes \mathcal{B}$ in general is not Reedy fibrant hence does not give a simplicial resolution. In fact the natural dg-functor $p: \Omega_{\text{poly}}(\Delta^1)\otimes \mathcal{B}\to \Omega_{\text{poly}}(\partial \Delta^1)\otimes \mathcal{B}=\mathcal{B}\times \mathcal{B}$ is not a fibration. In fact let $\theta: x\overset{\sim}{\to} y$ be an isomorphism up to homotopy between two different objects in $\mathcal{B}$, then $(\theta,id):(x,x)\overset{\sim}{\to} (y,x)$ is an isomorphism up to homotopy in $\mathcal{B}\times \mathcal{B}$ but $(\theta,id)$ cannot be in the image of $p$ because all objects in the image of $p$ are of the form $(x,x)$.
\end{rmk}

The following result follows from the definitions:
\begin{prop}\label{prop: simplicial resolution and images of simplicial sets}
$\Delta_{n}(\mathcal{B})$ is a model for the mapping dg-category $\mathcal{B}^{\Delta[n]}$.
\end{prop}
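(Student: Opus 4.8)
The statement to prove is Proposition \ref{prop: simplicial resolution and images of simplicial sets}: that $\Delta_n(\mathcal{B})$ is a model for the mapping dg-category $\mathcal{B}^{\Delta[n]}$. The first thing I would do is pin down what "is a model for" means here: since $\Delta_\bullet(\mathcal{B})$ is a simplicial frame on $\mathcal{B}$ (Corollary \ref{coro: simplicial resolution in dg-cat is a simplicial frame}), the mapping object $\mathcal{B}^K$ for a simplicial set $K$ is defined — following the general simplicial-frame formalism (cf. Definition \ref{defi: homotopy limit explicit formula general form} and the discussion preceding it) — as $\Delta_\bullet(\mathcal{B})^K$, computed by the end/limit formula $\int_{[k]} \Delta_k(\mathcal{B})^{K_k}$, or equivalently as the limit of $\Delta_\bullet(\mathcal{B})$ over the category of simplices of $K$. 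The claim is then that for $K = \Delta[n]$ this limit is canonically isomorphic (or at least naturally weakly equivalent) to $\Delta_n(\mathcal{B})$.

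The cleanest route is the Yoneda-style argument. Since $\Delta[n]$ is the representable presheaf on $[n]$, the category of simplices $(\bDelta \downarrow \Delta[n])$ has a terminal object, namely $\mathrm{id}_{[n]}: \Delta[n] \to \Delta[n]$, corresponding to the nondegenerate top simplex. Therefore the limit $\lim_{(\bDelta \downarrow \Delta[n])} \Delta_\bullet(\mathcal{B})$ — where the simplex $\sigma: [k] \to [n]$ is sent to $\Delta_k(\mathcal{B})$ via the structure maps $\sigma_*$ of Definition \ref{defi: simplicial maps between the n-simplexes in a dg-category} — is simply the value at the terminal object, i.e. $\Delta_n(\mathcal{B})$. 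So the first key step is to write $\mathcal{B}^{\Delta[n]} \cong \Delta_\bullet(\mathcal{B})^{\Delta[n]}$ as precisely this limit over the simplex category, and the second key step is to invoke the terminal object to collapse the limit. One must check that the cotensor $\mathcal{B}^{(-)}$ as a functor $\mathrm{sSet}^{op} \to \dgCat$ indeed sends a simplicial set to the limit of $\Delta_\bullet(\mathcal{B})$ over its simplices — this is exactly how $\mathcal{Y}^K$ is built from a simplicial object $\mathcal{Y}$ and is functorial in both variables as noted in the excerpt, so this is bookkeeping rather than real content.

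Concretely, unwinding the limit formula, an object of $\Delta_\bullet(\mathcal{B})^{\Delta[n]}$ is a compatible family: for each $\sigma: [k] \to [n]$ an object of $\Delta_k(\mathcal{B})$, with the families for $\sigma$ and $\tau \circ \sigma$ related by $\tau_*$. By compatibility with all the coface maps $[0] \to [n]$ one recovers the collection $\mathcal{E} = (E_0, \dots, E_n)$; by compatibility with maps $[k] \to [n]$ hitting a multi-index $(i_0 \leq \dots \leq i_k)$ one recovers $\phi^{k,1-k}_{i_0 \dots i_k}$; the Maurer–Cartan equation and the vanishing/degeneracy conditions on the $\phi$'s descend from the same conditions holding in each $\Delta_k(\mathcal{B})$. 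The same unwinding on morphisms matches the Hom-complexes and differentials. Thus I would either present the slick "terminal object" proof in one line, or — if the paper wants the reader to see the identification explicitly — spell out this dictionary between compatible families over $(\bDelta \downarrow \Delta[n])$ and the data $(\mathcal{E}, \phi)$ of Definition \ref{defi: n simplex in dg-category}.

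The main obstacle, and the only place where care is genuinely needed, is making sure the limit that defines the cotensor $\mathcal{B}^K$ is indexed by the right category with the right structure maps, so that the terminal-object collapse is legitimate — in particular that degenerate simplices of $\Delta[n]$ (i.e. non-injective $\sigma$) do not impose extra constraints. This is exactly why Definition \ref{defi: n simplex in dg-category} was set up to include the $\phi^{k,1-k}$ with repeated indices (all forced to be $0$ or $\mathrm{id}$): the $\sigma_*$ of Definition \ref{defi: simplicial maps between the n-simplexes in a dg-category} then makes $\Delta_\bullet(\mathcal{B})$ a genuine cosimplicial object, and the data attached to a degenerate simplex is determined by (and consistent with) the data attached to its nondegenerate image. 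Once that is observed, the terminal object argument goes through and the proposition follows with no further work. I expect the proof in the paper to be no more than a few lines, citing the simplicial-frame formalism and this terminal-object observation.
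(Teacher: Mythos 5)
Your argument is correct and matches what the paper intends: the paper gives no proof at all, stating only that the result ``follows from the definitions,'' and your Yoneda-style collapse of the cotensor $\Delta_\bullet(\mathcal{B})^{\Delta[n]}$ via the terminal object $\mathrm{id}_{[n]}$ of the category of simplices of $\Delta[n]$ (together with the remark that degenerate simplices impose no extra constraints, which is exactly why Definition \ref{defi: n simplex in dg-category} includes the repeated-index data) is precisely the unwinding being left to the reader. Nothing further is needed.
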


\subsection{The totalization and homotopy limit of cosimplicial dg-categories}\label{subsection: homotopy limit in dgCat}
Let $\{\mathcal{B}^{\bullet}\}$ be a cosimplicial object in $\dgCat_{\DK}$. We first study its totalization $\Tot(\mathcal{B}^{\bullet})$. Definition \ref{defi: total object of cosimplicial object} tells us that $\Tot(\mathcal{B}^{\bullet})$ is the equalizer of
$$
\prod_{[n]\in Ob(\mathbf{\Delta})}\Delta_n(\mathcal{B}^n)\underset{\psi}{\overset{\phi}{\rightrightarrows}}\prod_{(\sigma:[n]\to [m])\in Mor(\mathbf{\Delta})}\Delta_n(\mathcal{B}^m).
$$
hence it is a subcategory of $\prod_{[n]\in Ob(\mathbf{\Delta})}\Delta_n(\mathcal{B}^n)$.
Let $\prod_n(\mathcal{E}^n,\phi^n)$ be an object in $\Tot(\mathcal{B}^{\bullet})$ where each $(\mathcal{E}^n,\phi^n)$ is an object in $\Delta_n(\mathcal{B}^n)$. Recall that $\mathcal{E}^n$ consists of a collection of object  $\{E^n_0,\ldots, E^n_n\}$ in $\mathcal{B}^n$. The basic idea of the construction is to get rid of the redundant data. First we have the following proposition.

\begin{prop}\label{prop: E in totalization}
Let $\prod_n(\mathcal{E}^n,\phi^n)$ be an object in $\Tot(\mathcal{B}^{\bullet})$. For each $n$ let $d^n_i:[0]\to [n]$ be the map that sends $0$ to $i$, then for each $n$ and $i$ we have
$$
E^n_i=d^n_i(E^0_0).
$$
In other words, each $\mathcal{E}^n$ is determined by $\mathcal{E}^0=\{E^0_0\}$.
\end{prop}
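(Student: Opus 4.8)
The plan is to read the claimed identities directly off the equalizer description of $\Tot(\mathcal{B}^\bullet)$ recalled just above, by testing the equalizer relation on the ``vertex'' maps $d^n_i\colon[0]\to[n]$, $0\mapsto i$, of $\mathbf{\Delta}$. By Definition~\ref{defi: total object of cosimplicial object}, an object $\prod_n(\mathcal{E}^n,\phi^n)$ of $\Tot(\mathcal{B}^\bullet)$ is precisely a tuple on which the two maps $\phi$ and $\psi$ agree; unwinding these maps on the factor indexed by a morphism $\sigma\colon[n]\to[m]$ of $\mathbf{\Delta}$, the condition says that the object $\Delta_n(\sigma_*)(\mathcal{E}^n,\phi^n)$ of $\Delta_n(\mathcal{B}^m)$, obtained by pushing $(\mathcal{E}^n,\phi^n)$ forward levelwise along the cosimplicial structure functor $\sigma_*\colon\mathcal{B}^n\to\mathcal{B}^m$, coincides with the object $\sigma^*(\mathcal{E}^m,\phi^m)$ obtained by reindexing $(\mathcal{E}^m,\phi^m)$ along $\sigma$ as in Definition~\ref{defi: simplicial maps between the n-simplexes in a dg-category}.

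Next I would specialize to $\sigma=d^n_i\colon[0]\to[n]$, in which case the factor in question lives in $\Delta_0(\mathcal{B}^n)$. Because $[0]$ has a single object, an object of $\Delta_0(\mathcal{D})$ for any dg-category $\mathcal{D}$ is just a single object of $\mathcal{D}$ together with the forced datum $\phi^{1,0}_{00}=\mathrm{id}$, so equality of objects in $\Delta_0(\mathcal{B}^n)$ is equality of underlying objects of $\mathcal{B}^n$. On one side, $\phi$ first projects $\prod_n(\mathcal{E}^n,\phi^n)$ onto its $[0]$-component $(\mathcal{E}^0,\phi^0)=(\{E^0_0\},\phi^0)$ and then applies the functor $\mathcal{B}^0\to\mathcal{B}^n$ induced by $d^n_i$, yielding the object $d^n_i(E^0_0)$. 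On the other side, $\psi$ first projects onto the $[n]$-component $(\mathcal{E}^n,\phi^n)=(\{E^n_0,\dots,E^n_n\},\phi^n)$ and then reindexes along $d^n_i$, which by Definition~\ref{defi: simplicial maps between the n-simplexes in a dg-category} sends the object collection $\{E^n_0,\dots,E^n_n\}$ to $\{E^n_{d^n_i(0)}\}=\{E^n_i\}$. Equating the two gives $E^n_i=d^n_i(E^0_0)$; letting $i$ run over $0,\dots,n$ exhibits $\mathcal{E}^n$ as determined by $\mathcal{E}^0=\{E^0_0\}$.

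I do not expect any genuine obstacle: once $\phi$ and $\psi$ are identified on the factors indexed by the $d^n_i$, the statement drops out. The points that need a little care are purely bookkeeping: matching the indexing convention of Definition~\ref{defi: total object of cosimplicial object}, in which the factor attached to $\sigma\colon[n]\to[m]$ sits in $\Delta_n(\mathcal{B}^m)$, so that for $\sigma=d^n_i$ the ambient category is $\mathcal{B}^n$ and not $\mathcal{B}^0$; and noting that passing to underlying objects of $\Delta_\bullet$-categories commutes with both $\sigma_*$ and $\sigma^*$, which is what makes it legitimate to restrict the equalizer relation to the level of objects. One may also observe that the compatibilities among the various $E^n_i$ imposed by composites like $d^m_j\circ d^n_i$ hold automatically by functoriality of $\{\mathcal{B}^\bullet\}$, so that no additional restriction on $\mathcal{E}^0$ is produced; this, however, is not needed for the present proposition.
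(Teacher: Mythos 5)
Your proposal is correct and is essentially the paper's own argument: the paper likewise restricts the equalizer condition to the factor indexed by $d^n_i\colon[0]\to[n]$, identifies $\phi$ there as $\Delta_0((d^n_i)_*)$ applied to the $[0]$-component and $\psi$ as $(d^n_i)^*$ applied to the $[n]$-component, and reads off $E^n_i=d^n_i(E^0_0)$. Your extra bookkeeping remarks (that $\Delta_0(\mathcal{D})$-objects are just $\mathcal{D}$-objects, and that composites impose no further constraints) are accurate but not needed.
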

\begin{proof}
Recall that $\Tot(\mathcal{B}^{\bullet})$ is the equalizer of
$$
\prod_{[n]\in Ob(\mathbf{\Delta})}\Delta_n(\mathcal{B}^n)\underset{\psi}{\overset{\phi}{\rightrightarrows}}\prod_{(\sigma:[n]\to [m])\in Mor(\mathbf{\Delta})}\Delta_n(\mathcal{B}^m).
$$
Now pick $\sigma=d^n_i: [0]\to [n]$ we get
$$
\begin{tikzcd}[column sep=small,row sep=small]
\Delta_0(\mathcal{B}^0)\arrow{dr}{\Delta_0((d^n_i)_*)}& \\
 &\Delta_0(\mathcal{B}^n)\\
\Delta_n(\mathcal{B}^n)\arrow{ur}[swap]{(d^n_i)^*(\mathcal{B}^n)}&.
\end{tikzcd}
$$
Recall by Definition \ref{defi: simplicial maps between the n-simplexes in a dg-category} we have $(d^n_i)^*\mathcal{E}^n=\{E^n_i\}$. Therefore $\prod_n(\mathcal{E}^n,\phi^n)$ belongs to its equalizer implies that   $E^n_i=d^n_i(E^0_0)$.
\end{proof}

Now we move on to study the $\phi^n$'s in $\Tot(\mathcal{B}^{\bullet})$. First we need the following observation.

\begin{lemma}\label{lemma: exists a given order-preserving map}
Let $I=(i_0,\ldots, i_k)$, $0\leq i_0\leq \ldots \leq i_k\leq n$ be a multi-index. Then there exists a unique order-preserving map
$$
\sigma: [k]\to [n]
$$
such that $\sigma(j)=i_j$ for all $0\leq j\leq k$.
\end{lemma}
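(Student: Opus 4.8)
The statement to prove is Lemma~\ref{lemma: exists a given order-preserving map}: given a multi-index $I=(i_0,\ldots,i_k)$ with $0\leq i_0\leq\cdots\leq i_k\leq n$, there is a unique order-preserving map $\sigma\colon[k]\to[n]$ with $\sigma(j)=i_j$ for all $j$.

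\medskip

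The plan is essentially immediate: the conditions $\sigma(j)=i_j$ for $j=0,\ldots,k$ already specify the value of $\sigma$ on every element of $[k]=\{0,1,\ldots,k\}$, so there is at most one such set-map, giving uniqueness for free. For existence I would simply \emph{define} $\sigma$ by the assignment $j\mapsto i_j$ and check the two things that need checking: that it lands in $[n]$, and that it is order-preserving (a morphism in $\mathbf{\Delta}$). The first holds because $0\le i_0$ and $i_k\le n$, so $\sigma(j)=i_j\in\{0,\ldots,n\}=[n]$ for each $j$. The second is exactly the hypothesis $i_0\leq i_1\leq\cdots\leq i_k$: for $j\leq j'$ in $[k]$ we get $\sigma(j)=i_j\leq i_{j'}=\sigma(j')$, which is the definition of a weakly monotone (order-preserving) map in the simplicial category.

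\medskip

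There is no real obstacle here; the lemma is a bookkeeping observation that sets up the correspondence between multi-indices $(i_0,\ldots,i_k)$ with $0\le i_0\le\cdots\le i_k\le n$ and morphisms $[k]\to[n]$ in $\mathbf{\Delta}$, which will be used to reorganize the data $\phi^{k,1-k}_{i_0\ldots i_k}$ in the totalization in terms of the cosimplicial structure maps $\sigma^*$. If anything, the only point worth stating carefully is that "order-preserving" here means \emph{weakly} monotone (non-strict), so that repeated indices are allowed and the map need not be injective; this is consistent with the convention in Definition~\ref{defi: n simplex in dg-category} that the $\phi$'s with repeated subscripts are permitted (and forced to vanish for $k\geq 2$). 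So the proof I would write is one or two sentences: define $\sigma(j)=i_j$, observe it is order-preserving by the monotonicity hypothesis and well-defined into $[n]$ by the bounds, and note uniqueness is forced since the values on all of $[k]$ are prescribed.
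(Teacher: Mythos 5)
Your proof is correct and is exactly the argument the paper has in mind (the paper simply declares the lemma ``self-evident''): define $\sigma(j)=i_j$, note monotonicity and the bounds give a morphism in $\mathbf{\Delta}$, and uniqueness is forced since the values on all of $[k]$ are prescribed. Nothing further is needed.
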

\begin{proof}
It is self-evident.
\end{proof}

\begin{defi}\label{defi: k-th standard morphism}
For any $k\geq 1$ we call the morphism $\phi^{k,1-k}_{01\ldots k}$ in $\mathcal{B}^k$ the \emph{$k$th standard morphism}.
\end{defi}

\begin{prop}\label{prop: phi in totalization}
Let $\prod_n(\mathcal{E}^n,\phi^n)$ be an object in $\Tot(\mathcal{B}^{\bullet})$. Let $I=(i_0,\ldots, i_k)$, $0\leq i_0\leq \ldots \leq i_k\leq n$ be a multi-index and $\phi^{k,1-k}_{i_0\ldots i_k}$ be the corresponding morphism in $\mathcal{B}^n$. Let $\phi^{k,1-k}_{01\ldots k}$  be the $k$th standard morphism in $\mathcal{B}^k$ and $\sigma: [k]\to [n]$ be the map as in Lemma \ref{lemma: exists a given order-preserving map}. Then we have
$$
\phi^{k,1-k}_{i_0\ldots i_k}=\sigma_*(\phi^{k,1-k}_{01\ldots k})\in \Hom^{1-k}_{\mathcal{B}^n}(d^n_{i_k}(E^0_0),d^n_{i_0}(E^0_0)).
$$
In other words, each morphism is determined by the  standard morphisms.
\end{prop}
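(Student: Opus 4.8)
The plan is to argue exactly as in the proof of Proposition~\ref{prop: E in totalization}: read off the claimed identity from the equalizer defining $\Tot(\mathcal{B}^{\bullet})$ by testing it against a single, well-chosen morphism of $\bDelta$. Given the multi-index $I=(i_0,\ldots,i_k)$ with $0\le i_0\le\cdots\le i_k\le n$, Lemma~\ref{lemma: exists a given order-preserving map} supplies the unique order-preserving map $\sigma\colon[k]\to[n]$ with $\sigma(j)=i_j$. Since $\prod_p(\mathcal{E}^p,\phi^p)$ lies in the equalizer of Definition~\ref{defi: total object of cosimplicial object}, applying the two structure maps attached to the factor indexed by $\sigma$ must yield the same object; concretely, in $\Delta_k(\mathcal{B}^n)$ one has
\[
\Delta_k(\sigma_*)\bigl((\mathcal{E}^k,\phi^k)\bigr)\;=\;\sigma^*\bigl((\mathcal{E}^n,\phi^n)\bigr),
\]
where $\sigma_*\colon\mathcal{B}^k\to\mathcal{B}^n$ is the cosimplicial structure functor and $\sigma^*\colon\Delta_n(\mathcal{B}^n)\to\Delta_k(\mathcal{B}^n)$ is the map of Definition~\ref{defi: simplicial maps between the n-simplexes in a dg-category}.

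The next step is to evaluate both sides of this identity on the piece of morphism data of simplicial degree $k$ indexed by $(0,1,\ldots,k)$. By the definition of $\Delta_k(\sigma_*)$, this component of the left-hand side is $\sigma_*\bigl((\phi^k)^{k,1-k}_{01\ldots k}\bigr)$, that is, the image under $\sigma_*$ of the $k$th standard morphism of Definition~\ref{defi: k-th standard morphism}; by the definition of $\sigma^*$, the corresponding component of the right-hand side is $(\phi^n)^{k,1-k}_{\sigma(0)\ldots\sigma(k)}=\phi^{k,1-k}_{i_0\ldots i_k}$. Equating the two gives the asserted formula. It then remains only to check that both sides live in $\Hom^{1-k}_{\mathcal{B}^n}(d^n_{i_k}(E^0_0),d^n_{i_0}(E^0_0))$: the standard morphism lies in $\Hom^{1-k}_{\mathcal{B}^k}(E^k_k,E^k_0)$, and since $\sigma\circ d^k_j=d^n_{\sigma(j)}=d^n_{i_j}$, functoriality of the cosimplicial object together with Proposition~\ref{prop: E in totalization} identifies $\sigma_*(E^k_k)$ with $d^n_{i_k}(E^0_0)$ and $\sigma_*(E^k_0)$ with $d^n_{i_0}(E^0_0)$, as required.

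Since the argument reduces to unwinding a single equalizer condition and the explicit formulas for $\Delta_k(\sigma_*)$ and $\sigma^*$, I do not expect a genuine obstacle; the only points demanding care are (i) keeping apart the two operations attached to $\sigma$ — the covariant cosimplicial functor $\sigma_*\colon\mathcal{B}^k\to\mathcal{B}^n$ acting on objects and morphisms of $\mathcal{B}$, versus the contravariant reindexing $\sigma^*$ acting on the simplex categories $\Delta_{\bullet}(-)$ — and (ii) the degenerate multi-indices. When $\sigma$ is not injective (so $I$ has repeated entries), the same computation additionally forces $\sigma_*\bigl(\phi^{k,1-k}_{01\ldots k}\bigr)=0$ for $k\ge 2$, which is consistent with the requirement in Definition~\ref{defi: n simplex in dg-category} that $\phi^{k,1-k}_{i_0\ldots i_k}=0$ on repeated indices; and for $k=1$ the constant map $[1]\to[n]$ factors through $[0]$, so applying the equalizer to the unique map $[1]\to[0]$ shows $\sigma_*\bigl(\phi^{1,0}_{01}\bigr)$ is the identity, recovering $\phi^{1,0}_{ii}=\mathrm{id}$. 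Beyond these consistency checks the proof is complete once the displayed equalizer identity above is spelled out.
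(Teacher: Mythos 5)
Your proof is correct and is precisely the argument the paper intends: the paper's own ``proof'' merely says it is analogous to that of Proposition~\ref{prop: E in totalization} and leaves it as an exercise, and you carry out exactly that analogue by testing the equalizer condition against the map $\sigma\colon[k]\to[n]$ and reading off the $(0,1,\ldots,k)$-component of $\Delta_k(\sigma_*)(\mathcal{E}^k,\phi^k)=\sigma^*(\mathcal{E}^n,\phi^n)$. Your side remarks on degenerate multi-indices and on $\phi^{1,0}_{ii}=\mathrm{id}$ are correct consistency checks with the conventions of Definition~\ref{defi: n simplex in dg-category}.
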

\begin{proof}
The proof is very similar to that of Proposition \ref{prop: E in totalization} and is left as an exercise.
\end{proof}

From now on, when we talk about a morphism $\phi^{k,1-k}_{i_0\ldots i_k}$, we always think it as $\sigma_*(\phi^{k,1-k}_{01\ldots k})$.

\begin{thm}\label{thm: totalization in dgCat}
Let $\{\mathcal{B}^{\bullet}\}$ be a  cosimplicial object in $\dgCat_{\DK}$. An object of $\Tot(\mathcal{B}^{\bullet})$ consists of a pair $(E,\phi)$ where $E$ is an object in $\mathcal{B}^0$ and $\phi$ is a collection of standard morphisms
$$
\phi^{k,1-k}_{0\ldots k}\in \Hom^{1-k}_{\mathcal{B}^k}(d^k_k(E),d^k_0(E))
$$
which satisfy the following two conditions
\begin{enumerate}
\item the Maurer-Cartan equation
\begin{equation}
\D\phi+\phi\cdot \phi=0
\end{equation}
or more explicitly
\begin{equation}\label{equation: MC for totalization explicit}
(-1)^kd_{\mathcal{B}}(\phi^{k,1-k}_{0\ldots k})+\sum_{j=1}^{k-1}(-1)^j\phi^{k-1,2-k}_{0\ldots \widehat{j}\ldots k}+\sum_{j=1}^{k-1}(-1)^{(1-j)(k-j)}\phi^{j,1-j}_{0\ldots j}\circ \phi^{k-j,1-k+j}_{j\ldots k}=0.
\end{equation}

\item $\phi^{1,0}_{01}$ is invertible in the homotopy category $\text{Ho}(\mathcal{B}^1)$.
\end{enumerate}

A degree $m$ morphism $\theta$ from $(E, \phi)$ to $(F,\psi)$ consists of  a collection of morphism
$$
\theta^{k,m-k}_{0\ldots k}\in \Hom^{m-k}_{\mathcal{B}^k}(d^k_k(E),d^k_0(F)), \text{ for all } k\geq 0
$$
and the differential on $\theta$ is given by
\begin{equation}
d\theta=\D \theta+\psi\cdot \theta-(-1)^m\theta\cdot \phi.
\end{equation}
\end{thm}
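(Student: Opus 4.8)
The plan is to deduce the theorem directly from Propositions~\ref{prop: E in totalization} and~\ref{prop: phi in totalization}, together with their evident analogues for morphisms, and then to pin down exactly which families of ``standard'' data assemble to a genuine object (resp.\ morphism) of the equalizer $\Tot(\mathcal{B}^{\bullet})\subseteq\prod_{[n]}\Delta_n(\mathcal{B}^n)$. \emph{Step 1 (the data is standard data).} Propositions~\ref{prop: E in totalization} and~\ref{prop: phi in totalization} already tell us that an object $\prod_n(\mathcal{E}^n,\phi^n)$ of $\Tot(\mathcal{B}^{\bullet})$ is recovered from $E:=E^0_0\in\mathcal{B}^0$ via $E^n_i=(d^n_i)_*(E)$ and from the standard morphisms $\phi^{k,1-k}_{0\ldots k}\in\mathcal{B}^k$ via $\phi^{n;k,1-k}_{i_0\ldots i_k}=\sigma_*(\phi^{k,1-k}_{0\ldots k})$, where $\sigma\colon[k]\to[n]$ is the order-preserving map of Lemma~\ref{lemma: exists a given order-preserving map} and $\sigma_*\colon\mathcal{B}^k\to\mathcal{B}^n$ is the corresponding cosimplicial structure functor. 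Applying the same argument to the morphism components of the dg-functors cutting out the equalizer, one sees that a degree-$m$ morphism $\prod_n\theta^n$ of $\Tot(\mathcal{B}^{\bullet})$ is likewise determined by its standard components $\theta^{k,m-k}_{0\ldots k}\in\Hom^{m-k}_{\mathcal{B}^k}(d^k_k(E),d^k_0(F))$ through the same reindex-and-push-forward recipe. Hence objects and morphisms of $\Tot(\mathcal{B}^{\bullet})$ inject into tuples of standard data, and it remains to describe the image and transport the differential.

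\emph{Step 2 (which standard data occur).} Conversely, starting from $E\in\mathcal{B}^0$ and a family of standard morphisms, I would define $(\mathcal{E}^n,\phi^n)$ by the formulas above and check two things. First, that $\prod_n(\mathcal{E}^n,\phi^n)$ lies in the equalizer, i.e.\ $\Delta_n(\tau_*)(\mathcal{E}^n,\phi^n)=\tau^*(\mathcal{E}^m,\phi^m)$ for every $\tau\colon[n]\to[m]$; this is immediate from functoriality $(\tau\sigma)_*=\tau_*\sigma_*$ of the cosimplicial object $\mathcal{B}^{\bullet}$ and the reindexing formula for $\tau^*$ of Definition~\ref{defi: simplicial maps between the n-simplexes in a dg-category}. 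Second, that each $(\mathcal{E}^n,\phi^n)$ is a genuine object of $\Delta_n(\mathcal{B}^n)$ in the sense of Definition~\ref{defi: n simplex in dg-category}: it satisfies the Maurer--Cartan equation~(\ref{equation: MC for simplicial resolution explicit}) at every multi-index, all $\phi^{1,0}_{ij}$ are homotopy-invertible with $\phi^{1,0}_{ii}=\mathrm{id}$, and all higher $\phi$'s with repeated indices vanish. The key point is that each structure functor $\sigma_*$ is a dg-functor, hence commutes with $d_{\mathcal{B}}$, with composition, and with identities, and that every sub-multi-index $(i_a,\dots,i_b)$ or $(i_0,\dots,\widehat{i_j},\dots,i_k)$ of $I=(i_0,\dots,i_k)$ is the $\sigma$-image of the corresponding sub-multi-index of $(0,1,\dots,k)$. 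Consequently equation~(\ref{equation: MC for simplicial resolution explicit}) at $I$ in $\mathcal{B}^n$ is precisely $\sigma_*$ applied to equation~(\ref{equation: MC for totalization explicit}) at $(0,\dots,k)$ in $\mathcal{B}^k$, so the entire Maurer--Cartan system for all the $(\mathcal{E}^n,\phi^n)$ collapses to its instances $n=k$, $I=(0,\dots,k)$, which are exactly condition~(1). Similarly, homotopy-invertibility of every $\phi^{1,0}_{ij}$ follows from that of $\phi^{1,0}_{01}$ since dg-functors preserve homotopy-invertibility, and the normalization $\phi^{1,0}_{ii}=\mathrm{id}$ together with the vanishing of the repeated higher $\phi$'s is obtained by pulling the standard morphisms back along the appropriate codegeneracies and using that dg-functors preserve $0$ and $\mathrm{id}$.

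\emph{Step 3 (differential and composition).} Finally, since $\Tot(\mathcal{B}^{\bullet})$ is a full dg-subcategory of $\prod_n\Delta_n(\mathcal{B}^n)$, its differential and composition are inherited componentwise; reading off the standard-index component of formula~(\ref{equation: differential on morphisms of simplicial resolution}) in each $\Delta_n(\mathcal{B}^n)$ yields $d\theta=\D\theta+\psi\cdot\theta-(-1)^m\theta\cdot\phi$, and reading off composition gives the expected shuffled composite on standard data, which completes the identification.

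The hard part will be the bookkeeping inside Step~2: checking cleanly that \emph{every} instance of the Maurer--Cartan equation (and of the differential on morphisms) at an arbitrary multi-index is literally the push-forward, along the correct structure functor, of the corresponding equation at a standard multi-index, with all signs and all face/degeneracy indices lined up. The subtlest individual point is unwinding the normalization $\phi^{1,0}_{ii}=\mathrm{id}$, i.e.\ understanding precisely what the codegeneracies do to the first standard morphism; everything else reduces to naturality of $\D$ and of the shuffled product under dg-functors, which is routine once the indexing is set up.
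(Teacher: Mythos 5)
Your proposal is correct and follows essentially the same route as the paper: the paper's entire proof of Theorem \ref{thm: totalization in dgCat} is the one-line observation that it is a consequence of Definition \ref{defi: total object of cosimplicial object} together with Propositions \ref{prop: E in totalization} and \ref{prop: phi in totalization}, and your Steps 1--3 simply spell out the reduction (including the converse direction and the compatibility of the Maurer--Cartan equations under the structure functors $\sigma_*$) that the paper leaves implicit. The one point you flag as unresolved is worth making explicit: for the constant map $\sigma:[1]\to[n]$ the equalizer condition forces $\sigma_*(\phi^{1,0}_{01})=\phi^{1,0}_{ii}=\mathrm{id}$, i.e.\ $(s^0)_*\phi^{1,0}_{01}=\mathrm{id}_E$ in $\mathcal{B}^0$ (and likewise the images of the higher standard morphisms under codegeneracies must vanish), which is a genuine additional normalization on the standard data that neither the theorem statement nor the paper's proof records, so your reconstruction in Step 2 succeeds only once these conditions are adjoined to (1) and (2).
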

\begin{proof}
It is consequence of Definition \ref{defi: total object of cosimplicial object}, Proposition \ref{prop: E in totalization} and Proposition \ref{prop: phi in totalization}.
\end{proof}

\begin{coro}\label{coro: homotopy limit in dgcat explicit formula}
Let $\{\mathcal{B}^{\bullet}\}$ be a \emph{Reedy fibrant}  cosimplicial object in $\dgCat_{\DK}$. Then the construction in Theorem \ref{thm: totalization in dgCat} gives an explicit formula of the homotopy limit of  $\{\mathcal{B}^{\bullet}\}$.
\end{coro}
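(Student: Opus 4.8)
The plan is to assemble the statement from the three pieces already in hand. The logical skeleton is: Theorem~\ref{thm: totalization in dgCat} gives an explicit description of $\Tot(\mathcal{B}^\bullet)$; Theorem~\ref{thm: totalization and homotopy limit are weakly equi for fibrant object} says that for a Reedy fibrant cosimplicial object in a framed model category, $\Tot(\mathcal{B}^\bullet)$ is naturally weakly equivalent to $\holim(\mathcal{B}^\bullet)$; and Corollary~\ref{coro: simplicial resolution in dg-cat is a simplicial frame} (together with the fact that $\dgCat_{\DK}$ is known to be a framed/simplicial-framed model category, since every model category admits functorial frames by Hirschhorn) supplies the framing needed to apply that theorem. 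So the proof is essentially: ``Since $\{\mathcal{B}^\bullet\}$ is Reedy fibrant and $\dgCat_{\DK}$ is a framed model category with frames computed by the simplicial resolutions $\Delta_\bullet(-)$ of Corollary~\ref{coro: simplicial resolution in dg-cat is a simplicial frame}, Theorem~\ref{thm: totalization and homotopy limit are weakly equi for fibrant object} applies and gives $\Tot(\mathcal{B}^\bullet)\simeq\holim(\mathcal{B}^\bullet)$; combining this with the explicit description of $\Tot(\mathcal{B}^\bullet)$ in Theorem~\ref{thm: totalization in dgCat} yields the claim.''

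First I would recall that $\dgCat_{\DK}$ is a (cofibrantly generated) model category, hence admits functorial simplicial frames, so the notion of totalization of Definition~\ref{defi: total object of cosimplicial object} makes sense; moreover I would point out that the frame may be taken to be $\Delta_\bullet(-)$, which is exactly what was used in Theorem~\ref{thm: totalization in dgCat} to unwind the equalizer defining $\Tot$. This compatibility of the frame used in the abstract definition with the concrete simplicial resolution is the one genuinely substantive point: it is what guarantees that the explicit category built in Theorem~\ref{thm: totalization in dgCat} really is a model for $\Tot(\mathcal{B}^\bullet)$ in the sense of Definition~\ref{defi: total object of cosimplicial object}, and hence is entitled to be called the homotopy limit.

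Then I would simply invoke Theorem~\ref{thm: totalization and homotopy limit are weakly equi for fibrant object}: the hypothesis ``Reedy fibrant cosimplicial object in a framed model category'' is met by assumption and by the previous paragraph, so $\Tot(\mathcal{B}^\bullet)$ is naturally weakly equivalent to $\holim(\mathcal{B}^\bullet)$. Since Theorem~\ref{thm: totalization in dgCat} identifies $\Tot(\mathcal{B}^\bullet)$ with the explicit dg-category of pairs $(E,\phi)$ with $E\in\mathcal{B}^0$ and $\phi$ a Maurer--Cartan collection of standard morphisms, with morphisms and differential as described, this explicit dg-category computes $\holim(\mathcal{B}^\bullet)$ up to natural weak equivalence, which is the assertion of the corollary.

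The main (and only real) obstacle is the framing bookkeeping flagged above: making sure that the simplicial frame implicit in Definition~\ref{defi: total object of cosimplicial object} can be taken to be $\Delta_\bullet(-)$, i.e.\ that Corollary~\ref{coro: simplicial resolution in dg-cat is a simplicial frame} supplies a framing of the kind demanded by Theorem~\ref{thm: totalization and homotopy limit are weakly equi for fibrant object}, rather than some other abstract functorial frame (the totalization is only well-defined up to weak equivalence once one fixes a frame, and different frames give weakly equivalent totalizations only after one knows the relevant objects are fibrant/cofibrant enough). Everything else is a formal citation. I expect the write-up to be three or four lines, essentially the sentence quoted at the end of the first paragraph, with a parenthetical remark that $\dgCat_{\DK}$ is framed and that the frame in use is the one from Corollary~\ref{coro: simplicial resolution in dg-cat is a simplicial frame}.
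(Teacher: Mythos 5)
Your proof is correct and takes essentially the same approach as the paper, which deduces the corollary directly by combining Theorem \ref{thm: totalization and homotopy limit are weakly equi for fibrant object} with Theorem \ref{thm: totalization in dgCat}. The framing point you flag --- that the simplicial frame implicit in Definition \ref{defi: total object of cosimplicial object} is the resolution $\Delta_\bullet(-)$ of Corollary \ref{coro: simplicial resolution in dg-cat is a simplicial frame} --- is exactly the bookkeeping the paper leaves implicit (via Proposition \ref{prop: simplicial resolution and images of simplicial sets}), so your write-up is if anything slightly more careful.
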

\begin{proof}
It is a direct corollary of Theorem \ref{thm: totalization and homotopy limit are weakly equi for fibrant object} and Theorem \ref{thm: totalization in dgCat}.
\end{proof}

\section{Twisted complexes}\label{section: twisted complexes}
\subsection{Rectification}\label{subsection: rectification}
Let $\mathcal{S}$ be a class of ringed spaces and $\mathcal{F}$ be a functor from $\mathcal{S}^{op}$ to dgCat.

\begin{eg}
We can take $\mathcal{S}$ to be the category of schemes over a field $k$ with $\text{char}(k)=0$ and for any $k$-scheme $X$, $\mathcal{F}(X)$ is the dg-category of complexes of $\mathcal{O}_X$-modules on $X$.
\end{eg}

Now let $X_{\bullet}$ be a simplicial object in $\mathcal{S}$, then $\mathcal{F}(X_{\bullet})$ becomes a cosimplicial object in dgCat. Then we can apply Corollary \ref{coro: homotopy limit in dgcat explicit formula} to this case.

Now we have a problem: dgCat is a $2$-category (See \cite{tamarkin2007dg}) and in general $\mathcal{F}: \mathcal{S}^{op}\to \dgCat$  is only a pseudo-functor hence $\mathcal{F}(X_{\bullet})$ is a pseudo-cosimplicial object in dgCat. For example when $\mathcal{F}=Cpx$, for any morphisms $f: X\to Y$ and $g: Y\to Z$ between schemes,  we have $f^*: Cpx(Y)\to Cpx(X)$ and $g^*: Cpx(Z)\to Cpx(Y)$. However we do not have
$$
f^*g^*=(gf)^*
$$
because they are only canonically isomorphic. See \cite{vistoli2005grothendieck} Section 3.2.1.

We have this problem in Section \ref{subsection: group action}, and thus we need to find a canonical rectification of the diagram. The construction is as follows. First we   have the following definition inspired by \cite{vistoli2005grothendieck} Definition 3.10.

\begin{defi}\label{defi: dg-pseudo functor}
A \emph{dg pseudo-functor} $\mathcal{F}$ on $\mathcal{S}$ consists of the following data
\begin{enumerate}
\item For each object $X$ of $\mathcal{S}$ a dg-category $\mathcal{F}(X)$;
\item For each morphism $f: X\to Y$ a dg-functor $f^*: \mathcal{F}(Y)\to \mathcal{F}(X)$;
\item For each object $X$ of $\mathcal{S}$ a (strict) isomorphism $\epsilon_X:id_X^*\overset{\cong}{\to} id_{\mathcal{F}(X)}$ in the dg-category $\text{Fun}(\mathcal{F}(X), \mathcal{F}(X))$;
\item For each pair of arrows $X\overset{f}{\to}Y\overset{g}{\to}Z$ a (strict) isomorphism
$$
\alpha_{f,g}: f^*g^*\overset{\cong}{\to} (gf)^* \text{ in } \text{Fun}(\mathcal{F}(Z), \mathcal{F}(X)).
$$
\end{enumerate}
These data are required to satisfy the following conditions.
\begin{enumerate}[a.]
\item If  $f: X\to Y$ is a morphism in $\mathcal{S}$ and $\eta$ is an object in $\mathcal{F}(Y)$, then we have
$$
\alpha_{id_X,f}(\eta)=\epsilon_X(f^*\eta) \text{ in } \Hom_{\mathcal{F}(X)}(id_X^*f^*\eta,f^*\eta)
$$
and
$$
\alpha_{f,id_Y}(\eta)=f^*\epsilon_Y(\eta) \text{ in } \Hom_{\mathcal{F}(Y)}(f^*id_Y^*\eta,f^*\eta).
$$
\item For any morphisms $X\overset{f}{\to}Y\overset{g}{\to}Z\overset{h}{\to} W$ and an object $\theta$ in $\mathcal{F}(W)$, , the
diagram
$$
\begin{CD}
f^*g^*h^*\theta @>\alpha_{f,g}(h^*\theta)>> (gf)^*h^*\theta\\
@VVf^*\alpha_{g,h}(\theta)V @VV\alpha_{gf,h}(\theta)V\\
f^*(hg)^*\theta @>\alpha_{f,hg}(\theta)>> (hgf)^*\theta
\end{CD}
$$
strictly commutes.
\end{enumerate}
\end{defi}

\begin{rmk}
Notice that in Definition \ref{defi: dg-pseudo functor} we require that all isomorphisms are strict and all diagrams strictly commute, i.e. not just up to chain homotopy. One could consider more general $(\infty,1)$-functors from $\mathcal{S}$ to dgCat considered as an $(\infty,1)$-category. However we use this definition because on the one hand it simplifies the construction and on the other hand the pseudo-functors we are considering below, like $Cpx$ and $Perf$, fit into Definition \ref{defi: dg-pseudo functor}.
\end{rmk}

Now we will describe how to rectify this pseudo-functor to obtain a diagram. First we consider $\mathcal{F}$ as a dg-category fibered over $\mathcal{S}$. For a fixed $X$ in $\mathcal{S}$, the forgetful functor $\mathcal{S}/X\to \mathcal{S}$ makes $\mathcal{S}/X$  a fibered category over $\mathcal{S}$ and we can consider it as a fibered dg-category over $\mathcal{S}$. Then we define
\begin{equation}
\widetilde{\mathcal{F}}(X)=\text{Fun}_{\mathcal{S}}(\mathcal{S}/X,\mathcal{F})
\end{equation}
where the right hand side is the dg-category of dg-functors fibered over $\mathcal{S}$ between $\mathcal{S}/X$ and $\mathcal{F}$. In more details, a dg-functors $\xi: \mathcal{S}/X\to \mathcal{F}$ fibered over $\mathcal{S}$ sends each $h: Y\to X$ in $\mathcal{S}/X$ to an object $\xi(h)$ of the dg-category $\mathcal{F}(Y)$ and each morphism in $\mathcal{S}/X$
$$
\xymatrix{
  Y \ar[rr]^{g} \ar[dr]_{}
                &  &   Z \ar[dl]^{}    \\
                & X                 }
$$
to a degree $0$ closed morphism  $\xi_g:\xi(Y)\to f^*\xi(Z))$ in $\mathcal{F}(Y)$ which is further strictly invertible.

Moreover   any morphism $f: X\to Y$ gives a morphism of fibered categories
$$
\xymatrix{
  \mathcal{S}/X \ar[rr]^{\mathcal{S}/f} \ar[dr]_{}
                &  &  \mathcal{S}/Y \ar[dl]^{}    \\
                & \mathcal{S}                 }
$$
and hence the composition induces a dg-functor
$$
\widetilde{f^*}: \widetilde{\mathcal{F}}(Y)\to \widetilde{\mathcal{F}}(X).
$$

For $\widetilde{\mathcal{F}}$ we have the following lemma.

\begin{lemma}\label{lemma: F tilde is a strict functor}
$\widetilde{\mathcal{F}}: \mathcal{S}^{op}\to \dgCat$ is a strict functor.
\end{lemma}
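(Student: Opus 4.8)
The plan is to unwind the definition of $\widetilde{\mathcal{F}}(X) = \text{Fun}_{\mathcal{S}}(\mathcal{S}/X, \mathcal{F})$ and check directly that the three structural requirements of a strict functor — a functor rule on objects and morphisms, strict preservation of identities, and strict preservation of composition — all hold on the nose. The essential point is that, unlike $\mathcal{F}$ itself, which only has the comparison $2$-morphisms $\epsilon_X$ and $\alpha_{f,g}$ at its disposal, the category $\widetilde{\mathcal{F}}(X)$ consists of \emph{cartesian sections} of the fibration $\mathcal{F}\to\mathcal{S}$ over the slice $\mathcal{S}/X$, and these sections transport along base-change functors $\mathcal{S}/f\colon \mathcal{S}/X\to\mathcal{S}/Y$ strictly because precomposition of functors is strictly associative and unital. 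So the task is really to set up the bookkeeping carefully and then observe that the coherences all collapse.

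Concretely, first I would pin down the data of an object $\xi\in\widetilde{\mathcal{F}}(X)$: to each $h\colon Y\to X$ an object $\xi(h)\in\mathcal{F}(Y)$, and to each morphism $g\colon (h\colon Y\to X)\to(h'\colon Z\to X)$ in $\mathcal{S}/X$ (so $h = h'g$) a closed degree-$0$ strictly invertible morphism $\xi_g\colon \xi(h)\to g^*\xi(h')$ in $\mathcal{F}(Y)$, subject to the cocycle condition saying that for a composable pair the two ways of comparing $\xi(h)$ with the pullback of $\xi$ along the composite agree, using $\alpha_{f,g}$ to identify the iterated pullback with the pullback along the composite; and subject to $\xi_{\mathrm{id}}$ being the identity after applying $\epsilon$. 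A morphism $\xi\to\zeta$ in $\widetilde{\mathcal{F}}(X)$ is a compatible family of morphisms $\xi(h)\to\zeta(h)$ in each $\mathcal{F}(Y)$; this makes $\widetilde{\mathcal{F}}(X)$ a dg-category, with the hom-complex being an end/limit of hom-complexes, hence closed under the dg-structure. Next I would spell out $\widetilde{f^*}\colon\widetilde{\mathcal{F}}(Y)\to\widetilde{\mathcal{F}}(X)$ for $f\colon X\to Y$: it is precomposition with the functor $\mathcal{S}/f\colon\mathcal{S}/X\to\mathcal{S}/Y$, $(h\colon Z\to X)\mapsto (fh\colon Z\to Y)$. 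It is a dg-functor because it acts by relabelling the indexing slice and leaves all the $\mathcal{F}$-valued data (and their differentials and compositions) untouched.

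With this in hand the verification is short. For strict preservation of identities, $\mathcal{S}/\mathrm{id}_X$ is literally the identity functor on $\mathcal{S}/X$ (not merely isomorphic to it), so $\widetilde{\mathrm{id}_X^*} = \mathrm{id}_{\widetilde{\mathcal{F}}(X)}$ as a dg-functor — here I would emphasize that we do \emph{not} need $\epsilon_X$, because the slice construction is strictly unital. For strict preservation of composition, given $X\xrightarrow{f}Y\xrightarrow{g}Z$, the equality $(\mathcal{S}/f)\circ(\mathcal{S}/g) = \mathcal{S}/(gf)$ holds strictly, because on an object $h\colon W\to X$ both sides send it to $g(fh) = (gf)h\colon W\to Z$ and composition in $\mathcal{S}$ is strictly associative; since $\widetilde{(-)^*}$ is precomposition and precomposition is strictly functorial in its argument, $\widetilde{f^*}\circ\widetilde{g^*} = \widetilde{(gf)^*}$ on the nose. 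I would close by noting that none of the comparison isomorphisms $\epsilon,\alpha$ of $\mathcal{F}$ enter into the functoriality of $\widetilde{\mathcal{F}}$ — they are instead encoded \emph{inside} the objects of $\widetilde{\mathcal{F}}(X)$ via the cocycle conditions, which is exactly the point of the rectification.

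The main obstacle is not any single hard step but making the definitions precise enough that the collapse of coherences is visibly correct: in particular, stating exactly what a "dg-functor fibered over $\mathcal{S}$" is (so that pullback appears the way the paper's informal description indicates, with the structure maps $\xi_g\colon\xi(Y)\to g^*\xi(Z)$ landing in the fibre over the source), and checking that the dg-structure on $\widetilde{\mathcal{F}}(X)$ is well-defined — i.e.\ that the relevant limit of hom-complexes exists and that $d^2=0$ and the composition are inherited termwise. Once the fibered-functor formalism is fixed, the strict functoriality of $\widetilde{\mathcal{F}}$ reduces to the strict unitality and associativity of the slice construction $X\mapsto\mathcal{S}/X$, which is standard.
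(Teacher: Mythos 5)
Your proposal is correct and follows the same route as the paper, which simply asserts that strictness is immediate from the definition of $\widetilde{f^*}$ as precomposition with $\mathcal{S}/f$; you have merely written out the details (strict unitality and associativity of the slice construction, and the observation that the coherence data $\epsilon,\alpha$ live inside the objects rather than in the functoriality). Nothing further is needed.
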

\begin{proof}
It is obvious from the definition of $\widetilde{f^*}$.
\end{proof}

In general $ \widetilde{\mathcal{F}}(X)$ is a much larger dg-category than $\mathcal{F}(X)$. Nevertheless we have the following important result.

\begin{prop}[Dg-categorical Yoneda Lemma]\label{prop: F tilde is weakly equivalent to F}
For each $X$ there is a natural equivalence of categories realised by a dg-functor
$$
\Phi: \widetilde{\mathcal{F}}(X)\overset{\sim}{\to} \mathcal{F}(X).
$$
In fact $\Phi$ is essentially surjective on objects and degreewise fully faithful on morphisms. (In particular $\Phi$ is a DK equivalence.)
\end{prop}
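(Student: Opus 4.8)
The plan is to realise $\Phi$ as \emph{dg-categorical Yoneda evaluation}. Since $\mathrm{id}_X\colon X\to X$ is the terminal object of the slice $\mathcal{S}/X$, I would set $\Phi(\xi)=\xi(\mathrm{id}_X)\in\mathcal{F}(X)$ on objects, and $\Phi(\eta)=\eta_{\mathrm{id}_X}$ on morphisms, where $\eta_{\mathrm{id}_X}$ is the component at $\mathrm{id}_X$ of a morphism $\eta\colon\xi\to\xi'$ of fibered dg-functors. That $\Phi$ is a dg-functor is immediate, since composition, identities, degrees and the differential in $\widetilde{\mathcal{F}}(X)=\mathrm{Fun}_{\mathcal{S}}(\mathcal{S}/X,\mathcal{F})$ are all computed componentwise. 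It then remains to show $\Phi$ is essentially surjective on objects and degreewise fully faithful on hom-complexes.

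For essential surjectivity I would exhibit an explicit section on objects. Given $E\in\mathcal{F}(X)$, define $\xi_E$ by $\xi_E(h\colon Y\to X)=h^*E\in\mathcal{F}(Y)$; for a morphism $g\colon(Y,h_Y)\to(Z,h_Z)$ of $\mathcal{S}/X$ (so $h_Y=h_Zg$), let the structure isomorphism $(\xi_E)_g\colon h_Y^*E\to g^*h_Z^*E$ be the inverse of the coherence isomorphism $\alpha_{g,h_Z}(E)\colon g^*h_Z^*E\to(h_Zg)^*E$. The point is that the identity and cocycle axioms that $\xi_E$ must satisfy to be a fibered dg-functor are \emph{exactly} conditions (a) and (b) of Definition \ref{defi: dg-pseudo functor}: the unit axiom is (a), and the associativity axiom, applied to a composable pair $g',g$ together with a leg $h_W$, unwinds (after taking inverses) to the commuting square (b) with $(a,b,c,\theta)=(g,g',h_W,E)$. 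Since $\Phi(\xi_E)=\mathrm{id}_X^*E\cong E$ via $\epsilon_X(E)$, every isomorphism class is hit.

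For degreewise full faithfulness, fix $\xi,\xi'$ and consider the evaluation $\eta\mapsto\eta_{\mathrm{id}_X}$ on hom-complexes. Injectivity: applying the compatibility of $\eta$ with structure maps to the canonical morphism $h\colon(Y,h)\to(X,\mathrm{id}_X)$ gives $h^*(\eta_{\mathrm{id}_X})\circ\xi_h=\xi'_h\circ\eta_h$, so $\eta_h=(\xi'_h)^{-1}\circ h^*(\eta_{\mathrm{id}_X})\circ\xi_h$ is forced. Surjectivity: given $\beta\in\Hom^m_{\mathcal{F}(X)}(\xi(\mathrm{id}_X),\xi'(\mathrm{id}_X))$, \emph{define} $\eta_h:=(\xi'_h)^{-1}\circ h^*(\beta)\circ\xi_h$ and check that the family $(\eta_h)_h$ is compatible with $\xi_g,\xi'_g$ for arbitrary $g\colon(Y,h_Y)\to(Z,h_Z)$; this is a diagram chase using the cocycle identity $\xi_{h_Y}=\alpha_{g,h_Z}(\xi(\mathrm{id}_X))\circ g^*(\xi_{h_Z})\circ\xi_g$ (and likewise for $\xi'$) together with naturality of $\alpha_{g,h_Z}$ applied to $\beta$ — no sign intervenes because $\alpha$ has degree $0$. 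One verifies $\Phi(\eta)=\beta$ using $\xi_{\mathrm{id}_X}=\epsilon_X(\xi(\mathrm{id}_X))^{-1}$ and naturality of $\epsilon_X$. Both maps commute with the differential, since each $h^*$ is a dg-functor and each $\xi_h$ is closed of degree $0$, so conjugation by $\xi_h$ is a chain map; hence $\Phi$ induces an isomorphism of hom-complexes.

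Naturality in $X$ is then formal: for $f\colon X\to Y$ one has $\Phi_X(\widetilde{f^*}\xi)=\xi(f\colon X\to Y)$ and $f^*(\Phi_Y\xi)=f^*\bigl(\xi(\mathrm{id}_Y)\bigr)$, and the structure isomorphism $\xi_f\colon\xi(f)\xrightarrow{\ \cong\ }f^*\xi(\mathrm{id}_Y)$ provides a natural isomorphism $\Phi_X\circ\widetilde{f^*}\Rightarrow f^*\circ\Phi_Y$. Essential surjectivity together with the identification of hom-complexes shows $\Phi$ is a DK equivalence. The one step that genuinely needs care is the compatibility verification in the surjectivity half of full faithfulness — equivalently, the cocycle axiom for $\xi_E$ — which is where the full coherence data of the dg pseudo-functor is used; this is precisely the fibered Yoneda lemma carried out in the dg setting, and the dg bookkeeping itself (degrees, differentials) is harmless because every structural isomorphism in sight is a closed morphism of degree $0$.
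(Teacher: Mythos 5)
Your proof is correct and follows essentially the same route as the paper's: evaluation at $\mathrm{id}_X$, the explicit section $E\mapsto(h\mapsto h^*E)$ with structure maps given by inverting the coherence isomorphisms $\alpha$, and the forced formula $\eta_h=(\xi'_h)^{-1}\circ h^*(\eta_{\mathrm{id}_X})\circ\xi_h$ for full faithfulness. If anything, you are slightly more careful than the paper, which only verifies that a morphism is determined by its component at $\mathrm{id}_X$ and leaves the surjectivity half of full faithfulness (and naturality in $X$) implicit.
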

\begin{proof}
The proof is essentially the same as that of \cite{vistoli2005grothendieck} 3.6.2 and we give it here for completeness.

For any object $\xi$ in $\widetilde{\mathcal{F}}(X)$ we define $\Phi(\xi)$ to be $\xi(id_X)\in \mathcal{F}(X)$. For each natural transformation $T: \xi\to \eta$ we define $\Phi(T)$ to be the morphism $T_{id_X}: \xi(id_X)\to \eta(id_X)$. It is clear that $\Phi: \widetilde{\mathcal{F}}(X)\overset{\sim}{\to} \mathcal{F}(X)$ so defined is a dg-functor.

Then we need to show that $\Phi$ is  essentially surjective and  degreewise fully faithful. For the surjectivity, consider an arbitrary object $E\in \mathcal{F}(X)$, we want to construct a $\mathcal{S}$-dg-functor $\xi: \mathcal{S}/X\to \mathcal{F}$ such that $\xi(id_X)=E$. The functor $\xi$ is defined as follows. Given an object $h:Y\to X$ in $\mathcal{S}/X$ we define $\xi(h)=h^*E$. For a morphism
$$
\xymatrix{
  Y \ar[rr]^{g} \ar[dr]_{h}
                &  &   Z \ar[dl]^{f}    \\
                & X                 }
$$
in  $\mathcal{S}/X$ we know that there is a natural isomorphism $\alpha_{f,g}: g^*f^*\overset{\sim}{\to} h^*$ hence for $E$ we get
$$
\alpha_{f,g}(E):  g^*f^*E\to h^*E.
$$
whose inverse $\alpha_{f,g}^{-1}(E): h^*E\to g^*f^*E$ gives $\xi(g): \xi(h)\to \xi(f)$. This proves the   surjectivity on objects. Notice that $\alpha_{f,g}$ is strictly invertible so we do not need to choose a homotopic inverse of it.

As for the fully-faithfulness, for any natural transformation $T: \xi\to \eta$ and any $f: Y\to X$ we consider the diagram
$$
\xymatrix{
  Y \ar[rr]^{f} \ar[dr]_{f}
                &  &   X \ar[dl]^{id_X}    \\
                & X                 }
$$
The functoriality of $T$ gives the following commutative diagram
$$
\begin{CD}
\xi(f) @>T_f>> \eta(f)\\
@VV\xi_fV @VV\eta_fV\\
f^*\xi(id_X) @>f^*T_{id_X}>> f^*\eta(id_X)
\end{CD}
$$
By definition we know that the vertical arrows $\xi_f$ and $\eta_f$ are strictly invertible hence for any $f: Y\to X$, $T_f$ is uniquely determined by $T_{id_X}$ through the formula
$$
T_f=(\eta_f)^{-1}f^*T_{id_X}\xi_f.
$$
This proves the fully-faithfulness.
\end{proof}

Therefore for a simplicial space $X_{\bullet}$ we can replace the associated non-strict cosimplicial diagram $\mathcal{F}(X_{\bullet})$ by a strict cosimplicial diagram $\widetilde{\mathcal{F}}(X_{\bullet})$. From now on when we talk about the totalization, homotopy limit or Reedy fibrancy of $\mathcal{F}(X_{\bullet})$, we always mean the corresponding construction or property of $\widetilde{\mathcal{F}}(X_{\bullet})$.

\begin{rmk}
In this paper, rectification will be used mainly in Section \ref{subsection: group action} below.
\end{rmk}

\subsection{A criterion of Reedy fibrancy}\label{subsection: Criterion of Reedy fibrancy}
Before moving on to examples, we want to have a criterion on the Reedy fibrancy of $\mathcal{F}(X_{\bullet})$. First recall the definition of split simplicial spaces.

\begin{defi}[\cite{dugger2004hypercovers} Definition 4.13]\label{defi: split simplicial spaces}
A simplicial space $X_{\bullet}$ is said to be \emph{split}, or to \emph{have free degeneracies}, if there exist subspaces $N_k\hookrightarrow X_k$ such that the canonical maps
$$
N_k\coprod L_kX \to X_k
$$
are isomorphisms for all $k \geq 0$, where $L_kX $ is the $k$-th latching object of $X_{\bullet}$. This is equivalent to requiring
that the canonical map
$$
\coprod_{\sigma}N_{\sigma}\to X_k
$$
is an isomorphism, where the variable $\sigma$ ranges over all surjective maps in $\bf{\Delta}$ of the form $[k]\to [n]$, $N_{\sigma}$ denotes a copy of $N_n$, and the map $N_{\sigma}\to X_k$ is the one induced
by $\sigma^*: X_n\to X_k$.
\end{defi}

The idea is that the objects $N_k$ represent the non-degenerate part of $X_{\bullet}$ in
dimension $k$, and that the leftover degenerate part is as free as possible.

\begin{prop}\label{prop: split and left exact implies Reedy fibrant}
Let $\mathcal{F}: S^{op}\to \text{dgCat}$ be a functor which sends finite coproducts to products. Then for any split simplicial object  $X_{\bullet}$ in $\mathcal{S}$, the cosimplicial object $\mathcal{F}(X_{\bullet})$ is Reedy fibrant.
\end{prop}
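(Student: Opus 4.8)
The plan is to unwind the Reedy fibrancy condition into a concrete statement about matching objects and then reduce it, via the splitting hypothesis on $X_\bullet$ and the coproduct-to-product hypothesis on $\mathcal{F}$, to a triviality. Recall from Definition~\ref{defi: Reedy model structure} that $\mathcal{F}(X_\bullet)$ is Reedy fibrant iff for every $[k]$ the matching map $\mathcal{F}(X_k) \to M_{[k]}\mathcal{F}(X_\bullet)$ is a fibration in $\dgCat_{\DK}$. So the first step is to identify the matching object $M_{[k]}\mathcal{F}(X_\bullet)$. Since $\mathcal{F}$ is contravariant, $\mathcal{F}(X_\bullet)$ is a cosimplicial object, and its matching object at $[k]$ is a limit over the matching category of $\bDelta$ at $[k]$, whose objects are the surjections $[k] \twoheadrightarrow [n]$ with $n < k$. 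Applying $\mathcal{F}$ to the splitting $\coprod_{\sigma\colon [k]\twoheadrightarrow[n]} N_\sigma \xrightarrow{\sim} X_k$ and using that $\mathcal{F}$ turns finite coproducts into products, I would get $\mathcal{F}(X_k) \cong \prod_\sigma \mathcal{F}(N_\sigma)$, with the factor indexed by $\mathrm{id}_{[k]}$ being $\mathcal{F}(N_k)$ (the "non-degenerate" part) and the remaining factors indexed by the proper surjections being exactly the data of the matching object.

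The key step is then to check that, under this identification, the matching map $\mathcal{F}(X_k)\to M_{[k]}\mathcal{F}(X_\bullet)$ is (up to the canonical isomorphisms) the projection $\prod_{\sigma}\mathcal{F}(N_\sigma)\to \prod_{\sigma\neq \mathrm{id}}\mathcal{F}(N_\sigma)$ off the $\mathrm{id}_{[k]}$ factor — equivalently, that $\mathcal{F}(X_k)\cong M_{[k]}\mathcal{F}(X_\bullet)\times \mathcal{F}(N_k)$ with the matching map being the first projection. This requires matching up the combinatorics: every surjection $[k]\twoheadrightarrow[n]$ factors essentially uniquely through the splitting, and the simplicial structure maps of $X_\bullet$ permute the $N_\sigma$-summands compatibly with how the matching category's morphisms act. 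Granting this, the matching map is a product projection, hence a pullback of the map $\mathcal{F}(N_k)\to \ast$ (the terminal dg-category); since $\dgCat_{\DK}$ has a terminal object and every object is fibrant over it, this projection is a fibration, and therefore $\mathcal{F}(X_k)\to M_{[k]}\mathcal{F}(X_\bullet)$ is a fibration for every $k$, which is exactly Reedy fibrancy.

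I expect the main obstacle to be the bookkeeping in the second step: verifying that the canonical isomorphisms coming from $\mathcal{F}$ applied to the splitting are genuinely compatible with the simplicial identities, so that the product decomposition of $\mathcal{F}(X_k)$ is natural enough to intertwine the matching map with a literal projection. This is the standard "split implies Reedy cofibrant/fibrant" argument (cf.\ the dual statement for split simplicial objects in simplicial model categories), but here one must be mildly careful that $\mathcal{F}$ is only assumed to send \emph{finite} coproducts to products, so one should note that the matching category at $[k]$ is finite (there are finitely many surjections out of $[k]$) and hence only finite coproducts occur. One further point worth remarking is that since we are in the rectified setting, "$\mathcal{F}(X_\bullet)$" here means $\widetilde{\mathcal{F}}(X_\bullet)$, and one should check $\widetilde{\mathcal{F}}$ inherits the property of sending finite coproducts to products from $\mathcal{F}$ — this follows from the fact that $\widetilde{\mathcal{F}}(X)=\mathrm{Fun}_{\mathcal{S}}(\mathcal{S}/X,\mathcal{F})$ and limits/colimits in functor categories are computed pointwise, or alternatively from Proposition~\ref{prop: F tilde is weakly equivalent to F} together with the observation that the equivalence $\Phi$ is compatible with the relevant structure maps.
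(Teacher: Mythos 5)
Your proposal is correct and follows essentially the same route as the paper: identify $M_{[k]}\mathcal{F}(X_\bullet)$ with $\prod_{\sigma\neq\mathrm{id}}\mathcal{F}(N_\sigma)$ via the splitting and the coproducts-to-products hypothesis, observe that the matching map becomes the projection off the $\mathcal{F}(N_k)$ factor, and conclude it is a fibration (the paper checks this directly against Tabuada's two conditions rather than via pullback from the terminal object, but this is immaterial). Your remark that $\widetilde{\mathcal{F}}$ inherits the coproducts-to-products property is also the paper's opening observation.
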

\begin{proof}
First we notice that if $\mathcal{F}: S^{op}\to \text{dgCat}$   sends finite coproducts to products, then so is $\widetilde{\mathcal{F}}$. Now it is sufficient to show that the natural map $\widetilde{\mathcal{F}}(X_k)\to M_k(\widetilde{\mathcal{F}}(X_{\bullet}))$ is a fibration in $\dgCat_{\DK}$ for any $k\geq 0$, where $M_k(\widetilde{\mathcal{F}}(X_{\bullet}))$ is the $k$th matching object of  $\widetilde{\mathcal{F}}(X_{\bullet})$. Recall by Tabuada's definition \cite{tabuada2005structure} of Dwyer-Kan model structure on dgCat, a functor $G: \mathcal{C} \to \mathcal{D}$ of dg-categories is a fibration if:
\begin{itemize}
\item[F1] $G$ induces surjections on Hom-spaces.
\item[F2] Given $u: G(c) \to d$ such that $[u]$ is an isomorphism in $Ho(\mathcal{D})$, there is $v: c \to c'$ such that $G(c')=d$ and $F(v) = u$.
\end{itemize}

We need the following lemma on the matching object.

\begin{lemma}\label{lemma: matching object in sheaves on split simplicial space}
Let $\widetilde{\mathcal{F}}: S^{op}\to \text{dgCat}$ be a functor which sends finite coproducts to products. Let $X_{\bullet}$ be a split simplicial object and $
\coprod_{\sigma}N_{\sigma}\overset{\cong}{\to} X_k
$ is given as in Definition \ref{defi: split simplicial spaces}. Then for any $k\geq 0$ the $k$th matching object $M_k(\widetilde{\mathcal{F}}(X_{\bullet}))$ is isomorphic to
$$
\prod_{\substack{\sigma: [k]\to [n]\text{ surjective}\\ n<k}}\widetilde{\mathcal{F}}(N_{\sigma}).
$$
\end{lemma}
\begin{proof}[Proof of Lemma \ref{lemma: matching object in sheaves on split simplicial space}]
Since $X_{\bullet}$ is split and $\widetilde{\mathcal{F}}$ sends finite coproducts to products, we get
$$
\widetilde{\mathcal{F}}(X_k)=\prod_{\sigma: [k]\to [n]\text{ surjective}}\widetilde{\mathcal{F}}(N_{\sigma}).
$$

Then the result of Lemma \ref{lemma: matching object in sheaves on split simplicial space} is clear from the construction of the matching object.
\end{proof}

By Lemma \ref{lemma: matching object in sheaves on split simplicial space} we know that $\widetilde{\mathcal{F}}(X_k)=\widetilde{\mathcal{F}}(N_k)\oplus M_k(\widetilde{\mathcal{F}}(X_{\bullet}))$. The natural map $\widetilde{\mathcal{F}}(X_k)\to M_k(\widetilde{\mathcal{F}}(X_{\bullet}))$ is exactly the projection onto the second summand hence by definition it is a fibration. We finishes the proof of Proposition \ref{prop: split and left exact implies Reedy fibrant}.
\end{proof}

\subsection{Twisted complexes}\label{subsection: twisted complexes}
In this subsection by ringed space we mean a topological space $X$ together with a sheaf of (not necessarily commutative) ring $\mathcal{R}$ on $X$. Examples include
\begin{itemize}
\item A scheme $X$ with the structure sheaf $\mathcal{O}_X$;
\item A complex manifold $X$ with the  sheaf of analytic functions $\mathcal{O}_X$;
\item A topological space $X$ with the constant sheaf of rings $\underline{\mathbb{C}}$;
\item A scheme $X$ with the sheaf of  rings of differential operators $\mathcal{D}_X$.
\end{itemize}
Let
$$Cpx: \text{Space}^{op}\rightarrow \dgCat$$
be the contravariant functor which assigns to each  ringed space $X$ the dg-category of complexes of left $\mathcal{R}$-modules on $X$. This is a presheaf of dg-categories.

\begin{rmk}
In practice we are often less interested in the category of all complexes $\mathcal{R}$-modules than in some well-behaved subcategory, say complexes with quasi-coherent cohomology on a scheme, or $\mathcal D_{X}$-modules which are quasi-coherent as $\mathcal O_{X}$-modules. As long as the condition we impose is local the theory works equally well in those cases. We will explicitly consider the case of perfect complexes in the next section.
\end{rmk}

\begin{rmk}
In this subsection by $\mathcal{R}$-modules we always mean left $\mathcal{R}$-modules, unless it is explicitly pointed out otherwise.
\end{rmk}

Let $(X,\mathcal{R})$ be a  ringed space and $\mathcal{U}=\{U_i,~ i\in I\}$ be a locally finite open cover of $X$. We consider the \emph{\v{C}ech nerve} of the covering $\mathcal{U}$, $N(\mathcal{U})$, which is a simplicial space
$$
\begin{tikzcd}
\coprod U_i & \coprod U_i\cap U_j  \arrow[yshift=0.7ex]{l}\arrow[yshift=-0.7ex]{l} &  \coprod U_i\cap U_j\cap U_k  \arrow[yshift=1ex]{l}\arrow{l}\arrow[yshift=-1ex]{l}\cdots
\end{tikzcd}
$$
For simplicity let $U_{i_0\ldots i_n}$ denote the intersection $U_{i_0}\bigcap\ldots \bigcap U_{i_n}$. Moreover we use the notation $N_{\bullet}$ to denote the \v{C}ech nerve, i.e. we have
$$
N_k=\coprod_{(i_0\ldots i_k)} U_{i_0\ldots i_k}  \text{ for any } k\geq 0.
$$
Therefore
$$
Cpx(N_k)=\prod_{(i_0\ldots i_k)} Cpx(U_{i_0\ldots i_k})  \text{ for any } k\geq 0.
$$

We get a cosimplicial diagram of dg-categories
\begin{equation}\label{equation: second cosimplicial diagram of Cpx of an open cover}
\begin{tikzcd}
Cpx(N_0) \arrow[yshift=0.7ex]{r}\arrow[yshift=-0.7ex]{r}& Cpx(N_1) \arrow[yshift=1ex]{r}\arrow{r}\arrow[yshift=-1ex]{r}  &   Cpx(N_2) \cdots
\end{tikzcd}
\end{equation}

\begin{rmk}\label{rmk: open cover gives a strict cosimplicial diagram}
Note that Diagram \ref{equation: second cosimplicial diagram of Cpx of an open cover} is a strict cosimplicial diagram because the coface maps are given by restrictions to open subsets. Therefore we do not need to rectify the diagram.
\end{rmk}

We want to achieve an explicit description of the homotopy limit of Diagram \ref{equation: second cosimplicial diagram of Cpx of an open cover}. For this we introduce \emph{twisted complexes}.

Let us first set up some notations. For each $U_{i_k}$, let $E^{\bullet}_{i_k}$ be a   graded sheaf of $\mathcal{R}$-modules on $U_{i_k}$. Let
\begin{equation}\label{equation: bigraded sheaves}
C^{\bullet}(\mathcal{U},E^{\bullet})=\prod_{p,q}C^p(\mathcal{U},E^q)
\end{equation}
be the bigraded complexes of $E^{\bullet}$. More precisely, an element $c^{p,q}$ of $C^p(\mathcal{U},E^q)$ consists of a section $c^{p,q}_{i_0\ldots i_p}$ of $E^{q}_{i_0}$ over each non-empty intersection $U_{i_0\ldots i_p}$. If $U_{i_0\ldots i_p}=\emptyset$, simply let the component on it be zero.

Now if another   graded sheaf $F^{\bullet}_{i_k}$ of $\mathcal{R}$-modules is given on each $U_{i_k}$, then we can consider the map
\begin{equation}\label{equation: map with bigrade between graded sheaves}
C^{\bullet}(\mathcal{U},\text{Hom}^{\bullet}(E,F))=\prod_{p,q}C^p(\mathcal{U},\text{Hom}^q(E,F)).
\end{equation}
An element $u^{p,q}$ of $C^p(\mathcal{U},\text{Hom}^q(E,F))$ gives a section $u^{p,q}_{i_0\ldots i_p}$ of $\text{Hom}^q_{\mathcal{R}-\text{Mod}}(E^{\bullet}_{i_p},F^{\bullet}_{i_0})$, i.e. a degree $q$ map from $E^{\bullet}_{i_p}$ to $F^{\bullet}_{i_0}$ over the non-empty intersection $U_{i_0\ldots i_p}$. Notice that we require $u^{p,q}$ to be a map from the $E$ on the last subscript of $U_{i_0\ldots i_p}$ to the $F$ on the first subscript of $U_{i_0\ldots i_p}$. Again, if $U_{i_0\ldots i_p}=\emptyset$,  let the component on it be zero.

The compositions of $C^{\bullet}(\mathcal{U},\text{Hom}^{\bullet}(E,F))$ is the shuffled multiplication given in Section \ref{subsection: simplicial resolution in dgCat} Equation (\ref{equation: shuffled multiplication in simplicial dg}).
In particular $C^{\bullet}(\mathcal{U},\text{Hom}^{\bullet}(E,E))$ becomes an associative algebra under this composition (It is easy but tedious to check the associativity).

There are also \v{C}ech-style differential operator $\delta$ on $C^{\bullet}(\mathcal{U},\text{Hom}^{\bullet}(E,F))$  of bidegree $(1,0)$ given by the following formula which is similar to Equation (\ref{equation: D in simplicial dg}).
\begin{equation}\label{equation: delta on maps}
(\delta u)^{p+1,q}_{i_0\ldots i_{p+1}}=\sum_{k=1}^p(-1)^k u^{p,q}_{i_0\ldots \widehat{i_k} \ldots i_{p+1}}|_{U_{i_0\ldots i_{p+1}}} \,\text{ for } u^{p,q}\in C^p(\mathcal{U},\text{Hom}^q(E,F)).
\end{equation}

\begin{ctn}
Notice that the map $\delta$ defined above is different from the usual \u{C}ech differential since we do not include the $0$th and the $p+1$th indices.
\end{ctn}

\begin{prop}\label{prop: Leibniz for Cech differential}
The  differential satisfies the Leibniz rule. More precisely we have
$$
\delta(u\cdot v)=(\delta u)\cdot v+(-1)^{|u|}u\cdot (\delta v)
$$
and
$$
\delta(u\cdot c)=(\delta u)\cdot c+(-1)^{|u|}u\cdot (\delta c)
$$where $|u|$ is  the total degree of $u$.
\end{prop}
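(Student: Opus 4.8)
The plan is to prove both identities by a direct component-by-component verification, since $\delta$ and the shuffled product are both given by explicit formulas on multi-indices. I would fix homogeneous cochains $u\in C^p(\mathcal{U},\text{Hom}^q(F,G))$, of total degree $|u|=p+q$, and $v\in C^r(\mathcal{U},\text{Hom}^s(E,F))$, and compare the sections that the two sides of the first identity assign to an arbitrary multi-index $(i_0,\dots,i_{p+r+1})$. Expanding the left-hand side first by Equation \eqref{equation: delta on maps} and then by the shuffled multiplication \eqref{equation: shuffled multiplication in simplicial dg} yields a signed sum over $k=1,\dots,p+r$ of terms $(u\cdot v)^{p+r,q+s}_{i_0\dots\widehat{i_k}\dots i_{p+r+1}}$, and the crux is to understand, for each $k$, how deleting $i_k$ interacts with the splitting $(i_0\dots i_p\mid i_p\dots i_{p+r})$ that governs the shuffled product.

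The key step is to break this sum into two ranges. For $1\le k\le p$ the deleted index sits in the ``$u$-block'' and, after deletion, the shared index $i_p$ is moved to position $p$; one checks that the term equals $(-1)^{qr}u^{p,q}_{i_0\dots\widehat{i_k}\dots i_{p+1}}\circ v^{r,s}_{i_{p+1}\dots i_{p+r+1}}$, and that summing against the signs $(-1)^k$ reassembles precisely $((\delta u)\cdot v)^{p+r+1,q+s}_{i_0\dots i_{p+r+1}}$, because the signs in $\delta u$ are exactly these. For $p+1\le k\le p+r$, writing $k=p+l$, the $u$-block is untouched and the deletion occurs inside the ``$v$-block'', giving $(-1)^{p+l}(-1)^{qr}u^{p,q}_{i_0\dots i_p}\circ v^{r,s}_{i_p\dots\widehat{i_{p+l}}\dots i_{p+r+1}}$; factoring out $(-1)^p$ turns $\sum_l(-1)^l v^{r,s}_{i_p\dots\widehat{i_{p+l}}\dots i_{p+r+1}}$ into $(\delta v)^{r+1,s}_{i_p\dots i_{p+r+1}}$, and reconciling the prefactor $(-1)^{qr}$ here against the $(-1)^{q(r+1)}$ appearing in $u\cdot(\delta v)$ produces exactly the sign $(-1)^{p}\cdot(-1)^{q}=(-1)^{|u|}$. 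Adding the two ranges gives the claimed formula.

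The main obstacle is pure sign- and index-bookkeeping, concentrated entirely at the two ``boundary'' values $k=p$ and $k=p+1$, the only terms that could in principle straddle the split of the shuffled product. One has to verify carefully that $k=p$ still lies in the first range (deleting $i_p$ leaves $i_{p+1}$ as the gluing index of the reduced multi-index) while $k=p+1$ lies in the second. I would also record in the write-up that this clean behaviour is precisely why the modified differential of Equation \eqref{equation: delta on maps} omits the $0$th and last face maps (as flagged in the Caution above): those two omitted faces would move the outermost source or target object and hence could not be absorbed into either $(\delta u)\cdot v$ or $u\cdot(\delta v)$, so the ordinary \v{C}ech differential would not satisfy a Leibniz rule of this form.

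Finally, the second identity, with $c$ a section-valued cochain in $C^\bullet(\mathcal{U},E^\bullet)$, requires no new work: it is the same calculation with composition of maps of sheaves replaced by evaluation of a $\text{Hom}$-valued cochain on a section, and the formulas \eqref{equation: shuffled multiplication in simplicial dg} and \eqref{equation: delta on maps} have the same shape in that module setting. I would simply remark that the proof of the first identity applies mutatis mutandis.
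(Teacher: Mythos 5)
Your proposal is correct and is exactly the direct component-by-component verification that the paper leaves as ``a routine check'': splitting the sum over deleted indices into the $u$-block range $1\le k\le p$ and the $v$-block range $p+1\le k\le p+r$, with the sign $(-1)^{|u|}=(-1)^{p+q}$ arising from the face sign $(-1)^p$ together with the discrepancy $(-1)^{qr}$ versus $(-1)^{q(r+1)}$ in the shuffled product. Your observation that the omission of the $0$th and last faces in Equation (\ref{equation: delta on maps}) is precisely what makes both ranges land in well-defined Hom-spaces is also accurate and worth recording.
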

\begin{proof}
This is a routine check.
\end{proof}

Now we can define twisted complexes.

\begin{defi}[\cite{o1981trace} Definition 1.3 or \cite{ZhWei20151} Definition 5]\label{defi: twisted complex}
Let $(X,\mathcal{R})$ be a ringed  space and $\mathcal{U}=\{U_i\}$ be a locally finite open cover of $X$. A \emph{twisted complex} consists of a  graded sheaves $E^{\bullet}_{i}$ of  $\mathcal{R}$-modules on each $U_i$ together with
$$
a=\sum_{k \geq 0} a^{k,1-k}
$$
where $a^{k,1-k}\in C^k(\mathcal{U},\text{Hom}^{1-k}(E,E)) $ such that they satisfy the Maurer-Cartan equation
\begin{equation}\label{equation: MC for twisted complex}
\delta a+ a\cdot a=0.
\end{equation}
More explicitly, for $k\geq 0$ and all multi-index $(i_0\ldots i_k)$ we have
\begin{equation}\label{equation: MC for twisted complex explicit}
\sum_{j=1}^{k-1}(-1)^ja^{k-1,2-k}_{i_0\ldots\widehat{ i_j}\ldots i_k}+ \sum_{l=0}^k (-1)^{(1-l)(k-l)}a^{l,1-l}_{i_0\ldots i_l} a^{k-l,1-k+l}_{i_l\ldots i_k}=0.
\end{equation}

Moreover we impose the following non-degenerate condition: for each $i$, the chain map
\begin{equation}\label{equation: nondegenerate condition of aii}
a^{1,0}_{ii}: (E^{\bullet}_i,  a^{0,1}_i)\to (E^{\bullet}_i,  a^{0,1}_i) \text { is invertible up to homotopy.}
\end{equation}

The twisted complexes on $(X,\mathcal{R}, \{U_i\})$ form a dg-category: the objects are the twisted complexes $\mathcal{E}=(E^{\bullet}_i,a)$ and the morphism from $\mathcal{E}=(E^{\bullet}_i,a)$ to $\mathcal{F}=(F^{\bullet}_i,b)$ are $C^{\bullet}(\mathcal{U},\text{Hom}^{\bullet}(E,F))$ with the total degree. Moreover, the differential on a morphism $\phi$ is given by
\begin{equation}\label{equation: differential on morphisms of twisted complexes}
d \phi=\delta \phi+b\cdot \phi-(-1)^{|\phi|}\phi\cdot a.
\end{equation}

We denote the dg-category of twisted complexes on $(X,\mathcal{R}, \{U_i\})$ by Tw$(X,\mathcal{R}, \{U_i\})$. If there is no risk of confusion we can simply denote it by Tw$(X)$.
\end{defi}

For more details on twisted complexes see \cite{ZhWei20151}. In this paper we want to find the relation between twisted complexes and $Cpx(N_{\bullet})$  where $N_{\bullet}$ is the \u{C}ech nerve of $\mathcal{U}$. Our first result is the following proposition.

\begin{prop}\label{prop: twisted complexes are totalization}
Let $(X,\mathcal{R})$ be a   ringed   space and $\mathcal{U}=\{U_i\}$ be a locally finite open cover of $X$. Recall that we use $N_{\bullet}$ to denote the classifying space of $\{U_i\}$. Then the dg-category of twisted complexes Tw$(X,\mathcal{R}, \{U_i\})$ is exactly the totalization of the cosimplicial object $Cpx(N_{\bullet})$ in $\dgCat_{\DK}$.
\end{prop}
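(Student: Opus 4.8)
The plan is to specialize Theorem~\ref{thm: totalization in dgCat} to the cosimplicial dg-category $\mathcal{B}^n := Cpx(N_n) = \prod_{(i_0\ldots i_n)} Cpx(U_{i_0\ldots i_n})$ and to check that the resulting description of $\Tot(Cpx(N_\bullet))$ is, after the evident identifications, literally the dg-category $\mathrm{Tw}(X,\mathcal{R},\{U_i\})$ of Definition~\ref{defi: twisted complex}. By Remark~\ref{rmk: open cover gives a strict cosimplicial diagram} the diagram~\eqref{equation: second cosimplicial diagram of Cpx of an open cover} is a genuine cosimplicial object (its cofaces are restriction functors), so no rectification is needed and Theorem~\ref{thm: totalization in dgCat} applies directly.

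First I would unwind the objects. By Theorem~\ref{thm: totalization in dgCat} (which rests on Propositions~\ref{prop: E in totalization} and~\ref{prop: phi in totalization}) an object of $\Tot$ is a pair $(E,\phi)$ with $E\in\mathcal{B}^0=\prod_i Cpx(U_i)$ a complex $(E^\bullet_i,d_i)$ of $\mathcal{R}$-modules on each $U_i$, and $\phi=(\phi^{k,1-k}_{0\ldots k})_{k\ge 1}$ a family of standard morphisms. Under the identification ``complex $=$ graded sheaf equipped with a square-zero degree-$1$ endomorphism'', the datum $E$ is precisely a choice of graded sheaves $E^\bullet_i$ together with the $k=0$ part $a^{0,1}_i:=d_i$ of a twisted complex. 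For the face $d^k_j\colon[0]\to[k]$ the associated map $N_k\to N_0$ restricts the component $U_{l_0\ldots l_k}$ into $U_{l_j}$, so $d^k_k(E)$ and $d^k_0(E)$ have $(l_0\ldots l_k)$-components $E^\bullet_{l_k}$ and $E^\bullet_{l_0}$ restricted to $U_{l_0\ldots l_k}$; hence the $k$th standard morphism $\phi^{k,1-k}_{0\ldots k}\in\Hom^{1-k}_{\mathcal{B}^k}(d^k_k E,\, d^k_0 E)$ is exactly a \v{C}ech cochain $a^{k,1-k}\in C^k(\mathcal{U},\Hom^{1-k}(E,E))$, and I set $a^{k,1-k}:=\phi^{k,1-k}_{0\ldots k}$ for $k\ge 1$.

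It then remains to match the structure equations. By Lemma~\ref{lemma: exists a given order-preserving map} and Proposition~\ref{prop: phi in totalization} the cofaces that occur inside the \v{C}ech part of $\D$, and inside the shuffled products in $\mathcal{B}^k$, are exactly the restriction maps of $N_\bullet$; hence the \v{C}ech part of $\D$ becomes the operator $\delta$ of~\eqref{equation: delta on maps} and the composition in $\mathcal{B}^k$ becomes the shuffled multiplication on $C^\bullet(\mathcal{U},\Hom^\bullet(E,E))$. The only remaining piece is the internal term $(-1)^k d_{\mathcal{B}}(\phi^{k,1-k}_{0\ldots k})$ of~\eqref{equation: MC for totalization explicit}: as the differential of the Hom-complex of $Cpx(U_{l_0\ldots l_k})$ it unwinds to $(-1)^k a^{0,1}_{l_0}a^{k,1-k}_{l_0\ldots l_k}+a^{k,1-k}_{l_0\ldots l_k}a^{0,1}_{l_k}$, i.e.\ the two extreme ($l=0$, $l=k$) summands of~\eqref{equation: MC for twisted complex explicit}, while the middle summands ($1\le l\le k-1$) match the range-$1\le j\le k-1$ products of~\eqref{equation: MC for totalization explicit} directly; so the two Maurer--Cartan equations coincide. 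For the non-degeneracy conditions, ``$\phi^{1,0}_{01}$ invertible in $\operatorname{Ho}(\mathcal{B}^1)=\prod_{(ij)}\operatorname{Ho}(Cpx(U_{ij}))$'' says each $a^{1,0}_{ij}$ is a homotopy equivalence on $U_{ij}$; conversely, given only condition~\eqref{equation: nondegenerate condition of aii}, the $k=2$ Maurer--Cartan equation at the degenerate multi-index $(iji)$ (resp.\ $(jij)$) yields $a^{1,0}_{ij}\circ a^{1,0}_{ji}\simeq a^{1,0}_{ii}$ (resp.\ $a^{1,0}_{ji}\circ a^{1,0}_{ij}\simeq a^{1,0}_{jj}$), so $a^{1,0}_{ij}$ is homotopy invertible and the two conditions agree. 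Finally, morphisms unwind the same way: a degree-$m$ morphism $\theta=(\theta^{k,m-k}_{0\ldots k})_{k\ge 0}$ of $\Tot$ is precisely an element of total degree $m$ in $C^\bullet(\mathcal{U},\Hom^\bullet(E,F))$ with composition the shuffled multiplication, and $\D\theta+\psi\cdot\theta-(-1)^m\theta\cdot\phi$ reproduces~\eqref{equation: differential on morphisms of twisted complexes} once the term $(-1)^k d_{\mathcal{B}}(\theta^{k,m-k})$ is absorbed into the $b^{0,1}$-part of $b\cdot\theta$ and the $a^{0,1}$-part of $-(-1)^m\theta\cdot a$.

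I expect the only real work — and the place where an error could hide — to be the sign bookkeeping in the matchings above: keeping track of which cofaces and codegeneracies of $\mathbf{\Delta}$ produce which \v{C}ech restriction maps, and checking that the signs $(-1)^{qr}$ of the shuffled multiplication~\eqref{equation: shuffled multiplication in simplicial dg} and the signs $(-1)^p$, $(-1)^j$ in $\D$ agree with the signs $(-1)^{(1-l)(k-l)}$, $(-1)^j$ in~\eqref{equation: MC for twisted complex explicit} and~\eqref{equation: differential on morphisms of twisted complexes}. This is essentially automatic because, as noted in the remark following Definition~\ref{defi: n simplex in dg-category}, the sign conventions of $\Delta_\bullet(\mathcal{B})$ were deliberately chosen to coincide with those of~\cite{o1981trace}, which is the very source of Definition~\ref{defi: twisted complex}; one also observes in passing that degenerate \v{C}ech simplices (repeated cover indices) are allowed in both descriptions, and that the codegeneracy part of the totalization equalizer imposes exactly the matching constraints on them.
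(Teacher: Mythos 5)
Your proof is correct and follows essentially the same route as the paper's: unwind Theorem~\ref{thm: totalization in dgCat} over $N_k=\coprod_{(i_0\ldots i_k)}U_{i_0\ldots i_k}$, absorb the internal differential $d_{\mathcal{B}}$ into the missing $a^{0,1}$-terms of the Maurer--Cartan equation, and use the $k=2$ equation at degenerate indices to match the two non-degeneracy conditions. If anything, your use of both $(iji)$ and $(jij)$ to get a two-sided homotopy inverse of $a^{1,0}_{ij}$ is slightly more complete than the paper's one-line version of that step.
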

\begin{proof}
First notice that Remark \ref{rmk: open cover gives a strict cosimplicial diagram} tells us we do not need to rectify the diagram.

We need to compare Theorem \ref{thm: totalization in dgCat} and Definition \ref{defi: twisted complex}. The apparent difference between these two is that in Theorem \ref{thm: totalization in dgCat} we only take $(0\ldots k)$ as the multi-index while in Definition \ref{defi: twisted complex} we take all multi-indices. The problem is solved once we notice that
$$
N_k=\coprod_{(i_0\ldots i_k)} U_{i_0\ldots i_k}
$$
and therefore   for all $k\geq 1$ the $\phi^{k,1-k}_{0\ldots k}$ gives an $a^{k,1-k}_{i_0\ldots i_k}$ on each $U_{i_0\ldots i_k}$. For $k=0$ we do not have $\phi^{0,1}_0$ but we know that
$$
a^{0,1}_{i_0}\cdot a^{k,1-k}_{i_0\ldots i_k}+ a^{k,1-k}_{i_0\ldots i_k}\cdot a^{0,1}_{i_k}=(-1)^k d(a^{k,1-k}_{i_0\ldots i_k}).
$$
As a result we get the correspondence between Equation (\ref{equation: MC for totalization explicit}) and Equation (\ref{equation: MC for twisted complex explicit}).

As for the non-degenerate condition, by the $k=2$ case of the Maurer-Cartan equation we have
$$
-a^{1,0}_{ii}+a^{1,0}_{ij}\cdot a^{1,0}_{ji}+a^{0,1}_i\cdot a^{2,-1}_{iji}+a^{2,-1}_{iji}\cdot a^{0,1}_i=0.
$$
In other words $a^{1,0}_{ii}$ is homotopic to $a^{1,0}_{ij}\cdot a^{1,0}_{ji}$ hence $a^{1,0}_{ii}$ is invertible in the homotopy category for each $i$ implies that $a^{1,0}_{ij}$ is also invertible in the homotopy category for each $i$, $j$. Therefore the two version of non-degenerate conditions are equivalent.

The proof of the equivalence between the morphism is the same.
\end{proof}

Next we want to study the relation between the twisted complexes and the homotopy limit. For this purpose we need to determine whether $Cpx(N_{\bullet})$ is Reedy fibrant. In fact the following proposition gives an affirmative answer.

\begin{prop}\label{prop: Cpx of open cover is Reedy fibrant}
Let $(X,\mathcal{R})$ be a ringed   space and $\mathcal{U}=\{U_i\}$ be a locally finite open cover of $X$. Let $N_{\bullet}$ be the classifying space of $\{U_i\}$ as before. Then
$Cpx(N_{\bullet})$ is a Reedy  fibrant cosimplicial object in $\dgCat_{\DK}$.
\end{prop}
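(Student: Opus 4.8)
The strategy is to verify the hypotheses of Proposition \ref{prop: split and left exact implies Reedy fibrant}, applied to the presheaf $Cpx$ and the \v{C}ech nerve $N_\bullet$. That proposition requires two things: first, that $Cpx: \text{Space}^{op} \to \dgCat$ sends finite coproducts to products; and second, that $N_\bullet$ is a split simplicial space in the sense of Definition \ref{defi: split simplicial spaces}. Once both are established, Proposition \ref{prop: split and left exact implies Reedy fibrant} immediately yields that $Cpx(N_\bullet)$ is Reedy fibrant, which is exactly the assertion.

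For the first point, a disjoint union of topological spaces $\coprod_j Y_j$ has, by definition of a sheaf, the property that a sheaf of $\mathcal{R}$-modules on it is the same as a tuple of sheaves of $\mathcal{R}_{Y_j}$-modules, one on each $Y_j$; and likewise for complexes and for morphisms between complexes. Hence $Cpx(\coprod_j Y_j) \cong \prod_j Cpx(Y_j)$ as dg-categories, so $Cpx$ carries finite (indeed arbitrary) coproducts to products. This is essentially the observation already used in the excerpt when writing $Cpx(N_k) = \prod_{(i_0\ldots i_k)} Cpx(U_{i_0\ldots i_k})$.

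For the second point, I would exhibit the non-degenerate subspaces explicitly. Set
$$
N'_k = \coprod_{i_0 < i_1 < \cdots < i_k} U_{i_0\ldots i_k} \subseteq N_k = \coprod_{(i_0,\ldots,i_k)} U_{i_0\ldots i_k},
$$
the part of $N_k$ indexed by the \emph{strictly} increasing multi-indices. A general multi-index $(i_0 \le \cdots \le i_k)$ factors uniquely as $\tau$ followed by a strictly increasing tuple of some length $n \le k$, where $\tau: [k] \twoheadrightarrow [n]$ is the unique surjection collapsing the repetitions; under this bookkeeping, the summand $U_{i_0\ldots i_k}$ of $N_k$ is literally the summand of $N'_n$ indexed by the strictly increasing tuple, mapped into $N_k$ via $\tau^* : N_n \to N_k$. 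This gives precisely the decomposition $\coprod_\sigma N_\sigma \xrightarrow{\cong} N_k$ of Definition \ref{defi: split simplicial spaces}, with $N_\sigma$ a copy of $N'_n$ for $\sigma: [k]\twoheadrightarrow[n]$. So $N_\bullet$ is split. Then Proposition \ref{prop: split and left exact implies Reedy fibrant} applies verbatim and concludes the proof.

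\textbf{Main obstacle.} There is no deep obstacle here; the proof is essentially a matter of correctly matching the combinatorial description of the \v{C}ech nerve with the definition of a split simplicial space. The only point requiring a little care is the bookkeeping of multi-indices: checking that the factorization of an arbitrary weakly-monotone tuple through a surjection $\tau$ and a strictly increasing tuple is unique and natural in the simplicial structure, so that the identification $\coprod_\sigma N_\sigma \cong N_k$ is indeed compatible with the latching maps. I expect this to be routine but slightly fiddly; everything else is a direct appeal to the already-proven Proposition \ref{prop: split and left exact implies Reedy fibrant}.
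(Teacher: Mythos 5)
Your overall strategy coincides with the paper's: its proof is a one-line appeal to the splitness of the \v{C}ech nerve together with the fact that $Cpx$ turns coproducts into products, both fed into Proposition \ref{prop: split and left exact implies Reedy fibrant}, and your coproducts-to-products argument is fine. However, your explicit verification of splitness identifies the non-degenerate part with the \emph{strictly increasing} multi-indices, which would only be correct if $N_k$ were indexed by weakly increasing tuples. In this paper the \v{C}ech nerve is the iterated fibre product, so $N_k=\coprod_{(i_0,\ldots,i_k)\in I^{k+1}}U_{i_0\ldots i_k}$ ranges over \emph{all} tuples (the proof of Proposition \ref{prop: twisted complexes are totalization} explicitly uses summands such as $U_{iji}$). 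With that convention your decomposition already fails at $k=1$: the summand $U_{ji}$ with $j>i$ is neither degenerate (the only degeneracy hits the summands $U_{ii}$) nor strictly increasing, so your map $\coprod_\sigma N_\sigma\to N_1$ is not surjective. The fix is standard: take $N'_k$ to be the union of the summands indexed by tuples with no adjacent repetitions, i.e.\ $i_j\neq i_{j+1}$ for all $j$; every tuple arises uniquely from such a tuple of some length $n\le k$ via the surjection $[k]\twoheadrightarrow[n]$ collapsing the maximal blocks of adjacent equal entries, and this gives the isomorphism $\coprod_\sigma N_\sigma\cong N_k$ of Definition \ref{defi: split simplicial spaces} compatibly with the simplicial structure. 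With that correction the appeal to Proposition \ref{prop: split and left exact implies Reedy fibrant} goes through exactly as you describe.
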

\begin{proof}
It is easy to show that the \v{C}ech nerve of an open cover is a split simplicial space. It is also clear that the functor $Cpx$ sends finite coproducts to products hence the claim of Proposition \ref{prop: Cpx of open cover is Reedy fibrant} is a direct corollary of   Proposition \ref{prop: split and left exact implies Reedy fibrant}.
\end{proof}

\begin{coro}\label{coro: twisted complex is homotopy limit}
Let $(X,\mathcal{R})$ be a  ringed  space and $\mathcal{U}=\{U_i\}$ be a locally finite open cover of $X$. Let $N_{\bullet}$ be the classifying space of $\{U_i\}$ as before. Then
the dg-category of twisted complexes Tw$(X,\mathcal{R}, \{U_i\})$ gives an explicit construction of  $\holim Cpx(N_{\bullet})$.
\end{coro}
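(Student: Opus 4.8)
The plan is to assemble the three ingredients already in place, with no new work required. First I would invoke Proposition~\ref{prop: twisted complexes are totalization}, which identifies the dg-category $\mathrm{Tw}(X,\mathcal{R},\{U_i\})$ on the nose with the totalization $\Tot(Cpx(N_{\bullet}))$ of the cosimplicial dg-category $Cpx(N_{\bullet})$. Here it is important that, by Remark~\ref{rmk: open cover gives a strict cosimplicial diagram}, the cosimplicial object attached to an open cover is a \emph{strict} cosimplicial diagram — the coface maps being honest restrictions of sheaves — so no rectification is needed and the explicit model of Theorem~\ref{thm: totalization in dgCat} applies directly.

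Next I would appeal to Proposition~\ref{prop: Cpx of open cover is Reedy fibrant}: the \v{C}ech nerve $N_{\bullet}$ of an open cover is a split simplicial space and $Cpx$ carries finite coproducts to products, so $Cpx(N_{\bullet})$ is Reedy fibrant in $\dgCat_{\DK}$. Finally, since $\dgCat_{\DK}$ is a framed model category — simplicial frames being supplied by the simplicial resolution $\Delta_{\bullet}(\mathcal{B})$ of Corollary~\ref{coro: simplicial resolution in dg-cat is a simplicial frame} — Theorem~\ref{thm: totalization and homotopy limit are weakly equi for fibrant object}, equivalently Corollary~\ref{coro: homotopy limit in dgcat explicit formula}, gives a natural weak equivalence between the totalization and the homotopy limit of any Reedy fibrant cosimplicial object. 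Composing these identifications yields a natural weak equivalence
$$
\mathrm{Tw}(X,\mathcal{R},\{U_i\}) \;\simeq\; \Tot(Cpx(N_{\bullet})) \;\simeq\; \holim Cpx(N_{\bullet})
$$
in $\dgCat_{\DK}$, which is exactly the asserted explicit construction of the homotopy limit.

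There is essentially no obstacle: the corollary is a formal consequence of the preceding results. The only point deserving a word of care is verifying that the totalization produced by the general formula of Definition~\ref{defi: total object of cosimplicial object} coincides, under the simplicial frames $\Delta_{\bullet}(-)$, with the concrete twisted-complex model of Theorem~\ref{thm: totalization in dgCat}; but this is precisely the content of Propositions~\ref{prop: E in totalization} and~\ref{prop: phi in totalization} together with Proposition~\ref{prop: simplicial resolution and images of simplicial sets}, all of which have been established. If desired one could also note that the weak equivalence is functorial in $(X,\mathcal{R})$ and compatible with refinement of covers, but this is not needed for the statement.
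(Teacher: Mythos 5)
Your proposal is correct and follows exactly the paper's route: the paper proves this corollary by citing Proposition~\ref{prop: Cpx of open cover is Reedy fibrant} together with Corollary~\ref{coro: homotopy limit in dgcat explicit formula} (which packages Theorem~\ref{thm: totalization and homotopy limit are weakly equi for fibrant object} with the explicit totalization of Theorem~\ref{thm: totalization in dgCat}), the identification with twisted complexes being Proposition~\ref{prop: twisted complexes are totalization}. Your additional remarks on strictness of the diagram and on the simplicial frames are accurate but not needed beyond what those cited results already supply.
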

\begin{proof}
It is a direct corollary of Corollary \ref{coro: homotopy limit in dgcat explicit formula} and Proposition \ref{prop: Cpx of open cover is Reedy fibrant}.
\end{proof}

It is clear that Corollary \ref{coro: twisted complex is homotopy limit} applies to the following cases
\begin{itemize}
\item For a scheme $(X,\mathcal{O}_X)$, Tw$(X,\mathcal{O}_X, \{U_i\})$ is quasi-equivalent to the homotopy limit of sheaves of $\mathcal O$-modules on the nerve; \item Similarly for a complex manifold $(X,\mathcal{O}_X)$, Tw$(X,\mathcal{O}_X, \{U_i\})$ gives $\holim Cpx_{\mathcal{O}\text{-mod}}(N_{\bullet})$;
\item For a scheme $X$ with the sheaf of rings of differential operators $\mathcal{D}_X$ we obtain a category of twisted complexes of $ \mathcal{D}_{X}$-modules, Tw$(X,\mathcal{D}_X, \{U_i\})$, which is quasi-equivalent to $\holim Cpx_{\mathcal{D}\text{-mod}}(N_{\bullet})$
\item For a topological space $X$ with the constant sheaf of rings $\underline{\mathbb{C}}$ we obtain $Tw(X, \mathbb C, \{U_{i}\})$  as a model for $\holim_{\mathbb{C}\text{-mod}}(N_{\bullet})$. By inspection this is equivalent to the dg-category of $\infty$-local systems on $N_{\bullet}$, see \cite{holstein2014properness}.
If we now assume that the cover $\mathcal U$ is good, i.e. all opens and their intersections are contractible, then this nerve is homotopy equivalent to singular simplices on $X$, and $\infty$-local systems on the nerve are just sheaves of $\mathbb C$-modules on $X$ that are homotopy locally constant, see \cite{holstein2015moritacohomology} for more on this situation.
\end{itemize}

\begin{rmk}
Note that twisted complexes of $\mathcal{D}$-modules are not to be confused with twisted $\mathcal{D}$-modules on $X$, which are completely different objects.
\end{rmk}

\begin{rmk}
We can also extend this to Grothendieck topologies. For example, let $X$ by a variety (or scheme, or algebraic stack) equipped with a locally finite \'etale cover $\cat U$. We consider a sheaf of rings $\cat R_{X}$ on the \'etale site, for example the constant sheaf $\mathbf Z/\ell$, and write $Cpx$ for the category of $\cat R_{X}$-modules. We may then define $Tw(X, \cat R_{X}, \cat U)$.
 After rectifying the diagram $Cpx(N_{\bullet})$ we can show the analogue of Proposition \ref{prop: twisted complexes are totalization}.
 It is clear that the nerve of $\cat U$ is split, thus an analogue of Corollary \ref{coro: twisted complex is homotopy limit} also holds and $Tw(X, \cat R_{X}, \cat U) \simeq \holim Cpx(\cat N_{\bullet})$.

A potentially interesting object of study would be to take a colimit over all \'etale hypercovers of $X$, as one does when studying the \'etale homotopy type. Note that we consider colimits of hypercovers in Section \ref{subsection: stack of twisted complexes}, but we will not return to this example.
\end{rmk}
\subsection{Twisted perfect complexes}\label{subsection: twisted perfect complexes}

We are often not interested in all $\mathcal{R}$-modules but only some more convenient subcategory.
If the sheaf of ring $\mathcal{R}$ is commutative, we can also consider the contravariant functor
$$
Perf: \text{Space}^{op}\rightarrow \textsl{dg-Cat}
$$
which assigns to each ringed space $X$  the dg-category of strictly perfect complexes of $\mathcal{R}$-modules on $X$. As before let $\mathcal{U}=\{U_i\}$  locally finite open cover of $X$ and $N_k=\coprod U_{i_0\ldots i_k}$ be the nerve of the cover.  Similar to Diagram \ref{equation: cosimplicial diagram of Cpx of an open cover} we have a cosimplicial diagram of dg-categories.

\begin{equation}\label{equation: cosimplicial diagram of perfect Cpx of an open cover}
\begin{tikzcd}
Perf(N_0) \arrow[yshift=0.7ex]{r}\arrow[yshift=-0.7ex]{r}&  Perf(N_1) \arrow[yshift=1ex]{r}\arrow{r}\arrow[yshift=-1ex]{r}  &   Perf(N_2) \cdots
\end{tikzcd}
\end{equation}

We also have the following variant of twisted complexes.
\begin{defi}\label{defi: twisted perfect complex}
A \emph{twisted perfect complex} $\mathcal{E}=(E^{\bullet}_i,a)$ is the same as twisted complex in Definition \ref{defi: twisted complex} except that each $E^{\bullet}_i$ is a strictly perfect complex on $U_i$.

The twisted perfect complexes also form a dg-category and we denote it by $\text{Tw}_{\text{perf}}(X,\mathcal{R}, \{U_i\})$ or simply $\text{Tw}_{\text{perf}}(X)$. Obviously $\text{Tw}_{\text{perf}}(X)$ is a full dg-subcategory of Tw$(X)$.
\end{defi}

We have the following result for twisted perfect complexes which is similar to Proposition \ref{prop: twisted complexes are totalization} and Corollary \ref{coro: twisted complex is homotopy limit}.
\begin{prop}\label{prop: twisted perfect complexes are totalization}
Let $(X,\mathcal{R})$ be a  ringed  space and $\mathcal{U}=\{U_i\}$ be a locally finite open cover of $X$.  Then the dg-category of twisted complexes $\text{Tw}_{\text{perf}}(X,\mathcal{R}, \{U_i\})$ is exactly the totalization of the cosimplicial object $Perf(N_{\bullet})$ in $\dgCat_{\DK}$ and further weakly equivalent to the homotopy limit of $Perf(N_{\bullet})$.
\end{prop}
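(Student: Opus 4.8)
The plan is to follow the proofs of Proposition~\ref{prop: twisted complexes are totalization} and Corollary~\ref{coro: twisted complex is homotopy limit} almost verbatim, checking at each step that imposing strict perfectness on the local pieces causes no difficulty. Since the coface maps in Diagram~\ref{equation: cosimplicial diagram of perfect Cpx of an open cover} are given by restriction to open subsets (Remark~\ref{rmk: open cover gives a strict cosimplicial diagram}), $Perf(N_{\bullet})$ is already a strict cosimplicial object in $\dgCat_{\DK}$ and no rectification is needed.

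First I would identify $\Tot(Perf(N_{\bullet}))$ with $\text{Tw}_{\text{perf}}(X,\mathcal{R},\{U_i\})$. Apply Theorem~\ref{thm: totalization in dgCat} to the cosimplicial dg-category $\{Perf(N_{\bullet})\}$: an object of $\Tot(Perf(N_{\bullet}))$ is a pair $(E,\phi)$ with $E\in Perf(N_0)=\prod_i Perf(U_i)$ and $\phi$ a collection of standard morphisms satisfying the Maurer--Cartan equation, with $\phi^{1,0}_{01}$ homotopy invertible. The datum $E\in\prod_i Perf(U_i)$ is precisely a choice of strictly perfect complex $E^{\bullet}_i$ on each $U_i$; since $N_k=\coprod_{(i_0\ldots i_k)}U_{i_0\ldots i_k}$, exactly the same bookkeeping as in the proof of Proposition~\ref{prop: twisted complexes are totalization} applies: split the standard morphism $\phi^{k,1-k}_{0\ldots k}$ into its components $a^{k,1-k}_{i_0\ldots i_k}$ over the various intersections, recover the missing $\phi^{0,1}$ from the differential, and translate the non-degeneracy of $\phi^{1,0}_{01}$ into invertibility up to homotopy of each $a^{1,0}_{ii}$ via the $k=2$ Maurer--Cartan relation; morphisms and their differential translate identically. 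Equivalently — and this is the cleanest way to phrase it — $Perf(N_k)\hookrightarrow Cpx(N_k)$ is a full dg-subcategory stable under all coface and codegeneracy maps, hence $\Tot(Perf(N_{\bullet}))$ is the full sub-dg-category of $\Tot(Cpx(N_{\bullet}))=\text{Tw}(X,\mathcal{R},\{U_i\})$ on those objects all of whose components lie in $Perf$, i.e. on those $(E^{\bullet}_i,a)$ with each $E^{\bullet}_i$ strictly perfect; by Definition~\ref{defi: twisted perfect complex} this is $\text{Tw}_{\text{perf}}(X,\mathcal{R},\{U_i\})$.

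Next I would prove that $Perf(N_{\bullet})$ is Reedy fibrant. The \v{C}ech nerve of an open cover is a split simplicial space (as already observed in the proof of Proposition~\ref{prop: Cpx of open cover is Reedy fibrant}), and $Perf$ sends finite coproducts to products since $Perf(U\sqcup V)=Perf(U)\times Perf(V)$ by restriction to each clopen piece. Thus Proposition~\ref{prop: split and left exact implies Reedy fibrant} applies directly and gives Reedy fibrancy of $Perf(N_{\bullet})$. Theorem~\ref{thm: totalization and homotopy limit are weakly equi for fibrant object} (equivalently Corollary~\ref{coro: homotopy limit in dgcat explicit formula}) then yields a natural weak equivalence $\Tot(Perf(N_{\bullet}))\simeq\holim Perf(N_{\bullet})$, which together with the identification of the previous paragraph completes the proof.

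There is no deep obstacle here; the only point requiring genuine care is the identification in the second step, namely justifying the full-subcategory description of $\Tot(Perf(N_{\bullet}))$ inside $\Tot(Cpx(N_{\bullet}))$. This rests on the fact that $\Tot$ is defined by an equalizer (a limit) and that $Perf(N_{\bullet})\hookrightarrow Cpx(N_{\bullet})$ is a levelwise full embedding compatible with the cosimplicial structure, so a limiting cone lands in the subcategory precisely when its degree-zero component does. One should also keep in mind the usual size caveat that $Perf(U)$ need not be small, but this is handled exactly as for $Cpx$ and does not affect the argument.
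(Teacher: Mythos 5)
Your proposal is correct and follows the same route as the paper, which simply observes that $Perf$ also sends finite coproducts to products and then repeats the arguments of Proposition \ref{prop: twisted complexes are totalization} and Corollary \ref{coro: twisted complex is homotopy limit} verbatim. Your reformulation of the first step via the levelwise full embedding $Perf(N_{\bullet})\hookrightarrow Cpx(N_{\bullet})$ is a slightly cleaner packaging of the same identification, not a different argument.
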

\begin{proof}
Since $Perf$ also send finite coproducts to products, the proof is exactly the same as that of Proposition \ref{prop: twisted complexes are totalization} and Corollary \ref{coro: twisted complex is homotopy limit}.
\end{proof}

The significance of twisted perfect complexes in geometry is given by the construction in \cite{o1981hirzebruch}. Moreover, we have the following result:

\begin{thm}\label{thm: twisted complexes enhancement}[\cite{ZhWei20151} Theorem 4]
Let $X$ be a quasi-compact and separated
or Noetherian scheme and $\{U_i\}$ be an affine cover, then $\text{Tw}_{\text{perf}}(X)$ gives a dg-enhancement of $D_{\text{perf}}(X)$, the derived category of perfect complexes on $X$.
\end{thm}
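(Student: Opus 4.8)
The plan is to combine the homotopy-limit description of $\text{Tw}_{\text{perf}}(X)$ obtained above with the classical affine case and with Zariski descent for perfect complexes. By Proposition \ref{prop: twisted perfect complexes are totalization}, $\text{Tw}_{\text{perf}}(X)$ is weakly equivalent in $\dgCat_{\DK}$ to $\holim Perf(N_{\bullet})$, where $N_{\bullet}$ is the \v{C}ech nerve of the affine cover $\{U_i\}$. Under the hypotheses on $X$ --- automatically when $X$ is separated --- the intersections $U_{i_0\ldots i_k}$ occurring in $N_{\bullet}$ are again affine, so $Perf(N_k)=\prod Perf(U_{i_0\ldots i_k})$ is a finite product of dg-categories of strictly perfect complexes over affine schemes.

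First I would record the affine input: for $U=\operatorname{Spec} R$ the dg-category $Perf(U)$ is a dg-enhancement of $D_{\text{perf}}(U)$, since a strictly perfect complex is a bounded complex of finitely generated projective $R$-modules, hence $K$-projective, so the localization functor $H^0(Perf(U))\to D(U)$ is fully faithful with essential image $D_{\text{perf}}(U)$. I would also note that $\text{Tw}_{\text{perf}}(X)$ is pretriangulated: shifts and mapping cones of twisted perfect complexes are again twisted perfect complexes --- the components $E^{\bullet}_i$ are strictly perfect, and the twisting cochain together with the non-degeneracy condition (\ref{equation: nondegenerate condition of aii}) extend to the cone in the obvious way --- so $H^0(\text{Tw}_{\text{perf}}(X))$ carries a triangulated structure with these as its distinguished triangles.

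The main step is Zariski descent for perfect complexes, i.e. the gluing theorem of Thomason and Trobaugh: the assignment $U\mapsto D_{\text{perf}}(U)$, with its natural dg-enhancement, is a sheaf, so $D_{\text{perf}}(X)$ is the homotopy limit of $D_{\text{perf}}(N_{\bullet})$. Since $Perf(N_{\bullet})$ is Reedy fibrant (Proposition \ref{prop: split and left exact implies Reedy fibrant}, as $Perf$ sends finite coproducts to products and the \v{C}ech nerve is split), its homotopy limit in $\dgCat_{\DK}$ --- modeled by $\text{Tw}_{\text{perf}}(X)$ --- also computes the corresponding $\infty$-categorical limit; passing to the Morita model structure if necessary (perfect-complex enhancements are Morita-invariant) and identifying this limit with $D_{\text{perf}}(X)$ via the affine case yields a natural equivalence $H^0(\text{Tw}_{\text{perf}}(X))\xrightarrow{\sim}D_{\text{perf}}(X)$ of triangulated categories.

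I expect the delicate point, and the main obstacle, to be exactly this comparison between the homotopy limit formed in $\dgCat_{\DK}$ and the $\infty$-categorical one used in the descent statement --- that is, the claim that $H^0$ of the dg-homotopy-limit really recovers $D_{\text{perf}}(X)$, which does not follow formally from the objectwise statements. I would handle it either abstractly, by invoking that for a Reedy fibrant diagram the localization of a model category carries the model-categorical homotopy limit to the $\infty$-categorical limit and then applying $\infty$-categorical Zariski descent, or by hand: a perfect complex on $X$ restricts to strictly perfect complexes on the $U_i$ up to quasi-isomorphism, $K$-projectivity lets one choose the overlap comparisons together with coherent higher homotopies, and these assemble into a twisted perfect complex, giving a functor $D_{\text{perf}}(X)\to H^0(\text{Tw}_{\text{perf}}(X))$; essential surjectivity is gluing (the total-complex construction of O'Brian--Toledo--Tong \cite{o1981trace}, or descent once more), and full faithfulness reduces to showing that the Hom-complex $C^{\bullet}(\mathcal U,\Hom^{\bullet}(E,F))$ equipped with the twisted differential (\ref{equation: differential on morphisms of twisted complexes}) computes $\mathbf{R}\Hom_X$ of the corresponding perfect complexes on $X$, which follows from the \v{C}ech-to-derived-functor spectral sequence and the vanishing of higher \v{C}ech cohomology of quasi-coherent sheaves on the affine cover $\mathcal U$. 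Either route is carried out in detail in \cite{ZhWei20151}, Theorem 4.
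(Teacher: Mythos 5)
The paper does not actually prove this theorem: the proof environment consists solely of the citation, and the statement is imported wholesale from \cite{ZhWei20151}. Your sketch should therefore be measured against the argument of that reference, and your second, ``by hand'' route essentially reconstructs it: \cite{ZhWei20151} defines a sheafification functor $\cat S$ sending a twisted perfect complex to its associated total complex on $X$, proves quasi-essential surjectivity onto perfect complexes by the O'Brian--Toledo--Tong gluing construction, and proves quasi-full faithfulness by identifying the twisted Hom-complex $C^{\bullet}(\mathcal U,\Hom^{\bullet}(E,F))$ with $\mathbf{R}\Hom_X$ via the \v{C}ech-to-derived-functor spectral sequence and cohomology vanishing on the pieces of the cover. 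Your first, abstract route (identify the DK homotopy limit with the $\infty$-categorical limit of the enhancements, then invoke Thomason--Trobaugh descent) is a genuinely different argument; it can be made to work, but the comparison of the $\dgCat_{\DK}$ homotopy limit with the limit of underlying $\infty$-categories is, as you yourself flag, the hard point, and it is entirely avoided by arguing directly through $\cat S$. It also buys nothing here beyond what the direct argument gives, though it would generalize more readily to non-Zariski topologies.

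There is one genuine gap in the sketch as written: you assert that the intersections $U_{i_0\ldots i_k}$ are ``automatically'' affine, but that uses separatedness. In the Noetherian non-separated case allowed by the theorem (e.g.\ the plane with doubled origin, whose two affine charts meet in $\mathbb{A}^2\setminus\{0\}$) the intersections are merely quasi-compact opens of affines, quasi-coherent sheaves on them can have nonvanishing higher cohomology, and both your $K$-projectivity input and the \v{C}ech-to-derived collapse fail. So your argument establishes only the quasi-compact separated case; the Noetherian case needs the separate treatment given in \cite{ZhWei20151} (compare the role of $p$-good covers in Section \ref{subsection: stack of twisted complexes}). The remaining ingredients --- pretriangulatedness of $\text{Tw}_{\text{perf}}(X)$, the affine computation, and the reduction via Proposition \ref{prop: twisted perfect complexes are totalization} --- are all correct.
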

\begin{proof} See \cite{ZhWei20151} Theorem 4.
\end{proof}

\begin{rmk}
A systematic study of presheaves valued in sSet has been given in \cite{dugger2004hypercovers} and in \cite{holstein2015moritacohomology} Section 3.1 some results on the presheaves valued in dgCat has been achieved. We expect that the result in this section can help us further study the presheaves in dgCat.
\end{rmk}

\subsection{The stack of twisted perfect complexes}\label{subsection: stack of twisted complexes}

In this subsection we assume that $(X, \cat R)$ is a hereditarily paracompact ringed space
with a basis $\cat P$ of $p$-good open sets.

Recall from Section 3.3.2 of \cite{ZhWei20151} the following notation:
\begin{defi}
A locally ringed space $(U, \cat O_{U})$ is \emph{$p$-good} if any perfect complex of quasi-coherent sheaves on $U$ is quasi-isomorphic to a strictly perfect complex and quasi-coherent sheaves on $U$ have no higher cohomology.
A \emph{$p$-good cover} is a cover such that its nerve consists of a coproduct of $p$-good spaces in every degree.
\end{defi}

Interesting examples of $p$-good spaces are affine algebraic varieties, Stein spaces and contractible topological spaces whose structure sheaf is constant.

Thus our assumptions are satisfied by the following examples:
\begin{itemize}
\item separated schemes with $\cat P$ given by affine subschemes,
\item complex manifolds with $\cat P$ given by Stein subspaces,
\item locally contractible hereditarily paracompact topological spaces with constant structure sheaf, where $\cat P$ is any choice of a contractible basis.
\end{itemize}
\begin{rmk}
Note that as contractible open sets are not closed under intersection we have to make some choice in the last class of examples.
\end{rmk}
Now let $\cat C$ be the  category  given by a class of ringed spaces satisfying our assumptions.
Open covers of topological spaces form the basis of a Grothendieck topology $\tau$ on $\cat C$, giving us a site $(\cat C, \tau)$.

We note that $(\mathcal C, \tau)$ is a Verdier site. A Verdier site is just a Grothendieck site with a basis such that if $U \to X$ is a covering family then so is $U \to U\times_{X}U$. See Section 8 of \cite{dugger2004hypercovers} for details.
(Another example of a Verdier site is given by the \'etale topology on schemes.)

Let $y: \mathcal C \to PSh(\mathcal C, \tau)$ be the Yoneda embedding.

\begin{defi}
A \emph{hypercover} $\cat U = \{U_{\bullet}\}$ of an object $X \in \cat C$ is an augmented simplicial presheaf $U_{\bullet} \to y_{X}$ on $(\cat C, \tau)$ such that all $U_{n}$ are coproducts of representables and all $U_{n} \to M_{n}U$ are in $\tau$. (Here matching objects are computed over $y_X$, in particular $M_{0}U = y_{X}$.)
\end{defi}
 \begin{rmk}
 In \cite{dugger2004hypercovers} these are called basal hypercovers, but Theorem 8.6 in loc.\ cit.\ shows that any hypercover has a basal refinement. We explain below that this means it is enough to work with basal hypercovers.
  \end{rmk}

The canonical example of a hypercover is the nerve of a \v Cech covering.

 We say a hypercover is \emph{locally finite} if every cover $U_{n} \to M_{n}U$ is locally finite.
 A hypercover is \emph{split} if it satisfies Definition \ref{defi: split simplicial spaces} (replacing spaces by presheaves).
Given a collection $\cat P$ of open sets a hypercover is a \emph{$\mathcal P$-hypercover} if in every degree it consists of a coproduct of (sheaves represented by) objects in $\cat P$.

 The morphisms between hypercovers are \emph{refinements}, where we say $\cat U \to X$ is a refinement of $\cat V \to X$ if it factors through $\cat V \to X$, i.e.\ there are compatible factorizations $U_{n} \to V_{n}$ for every $n$. We call a refinement by a $\mathcal P$-hypercover a \emph{$\cat P$-refinement}.

 Now we note that we can define twisted complexes on a (locally finite) hypercover just as well as on a cover. Moreover, Proposition \ref{prop: Cpx of open cover is Reedy fibrant} and Proposition \ref{prop: twisted perfect complexes are totalization} still hold if $N_{\bullet}$ is a split hypercover. In the proof we only needed that $N_{\bullet}$ is split, not that it is the nerve of a cover.

  There is a contravariant functor of twisted complexes from the category of all locally finite hypercovers $\mathcal U$ of $X$ to $\dgCat$ that sends a refinement to the functor induced by restriction.

The class of all hypercovers is too large and we will work with the following smaller category of hypercovers:
\begin{defi}
Let $HC(X)$ denote the subcategory of locally finite, split $\mathcal P$-hypercovers of $X$.
We will sometimes write $HC$ if $X$ is clear from context.
  \end{defi}
  \begin{rmk}
  In the example of topological spaces with constant structure sheaf the choice of $\cat P$ is not canonical, we need to choose $\cat P$ for every $X$. But there are no compatibility conditions, and different choices of $\cat P$ do not affect the results.
 \end{rmk}
  Twisted complexes as we have defined them depend on a choice of hypercover. We now need to turn them into a functor on $\cat C$.
  Let $\cat C_{V}$ be the category whose objects are given by pairs $(X, \cat V)$ where $X \in \cat C$ and $\cat V \in HC(X)$,
  with morphisms given by maps of ringed spaces that are compatible with the hypercovers.

  There is a natural forgetful functor $u: \cat C_{V}\to \cat C$ which is full.
  \begin{defi}
  We denote by $Tw_{perf}: \cat C^{op} \to \dgCat$ the homotopy left Kan extension along $u$ of the natural functor $Tw_{perf}: \cat C_{V}^{op} \to \dgCat$ given by $(X, \mathcal V) \mapsto Tw_{perf}(X, \cat R_{X}, \cat V)$ on objects and by restriction on morphisms.
  \end{defi}
  Recall that the \emph{left Kan extension}, or the \emph{relative colimit}, of a functor $T: A \to  D$ along a functor $p: A \to B$ is defined by applying the left adjoint $p_{!}$ (if it exists) of the restriction functor $p^{*}: Fun(B,D) \to Fun(A,D)$ to $T$. The special case that $B$ is the 1-object category recovers the colimit of $T$. If $D$ is a model category then there is the corresponding notion of the homotopy left Kan extension, obtained as the left derived functor of $p_{!}$. See for example Section 10 of \cite{Dugger} for more details.

    \begin{rmk}
    Note that to construct the left Kan extension we need our index categories to be small. We may either restrict attention to a small subcategory of $\cat C$, or avoid this condition by assuming the existence of a suitable Grothendieck universe.
    \end{rmk}
     Our new definition of twisted complexes looks rather unwieldy, but we will show that we can still compute twisted complexes on $X$ using any hypercover in $HC(X)$.

  First we recall two very useful lemmas about computing homotopy colimits.
    Given a functor $\iota: I \to J$, recall the natural map $e_j: (j \downarrow \iota) \to J$ from the undercategory, sending $(i, j \rightarrow \iota(i))$ to $\iota(i)$.
\begin{lemma}\label{lemma-duggersthm-2}
Let $\iota: I \to J$ be a
functor between small categories such that for every $j \in J$ the undercategory $(j \downarrow \iota)$ is nonempty with a contractible nerve (we say $\iota$ is homotopy terminal) and let $X: J \to \cat M$ be a diagram with values in a model category. Then the map $\hocolim_{I} \iota^{*} X \to \hocolim_{J} X$ is a weak equivalence.
\end{lemma}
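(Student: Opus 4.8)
The plan is to prove this as the standard cofinality theorem for homotopy colimits (sometimes attributed to Hirschhorn or Dugger). The statement says: if $\iota: I \to J$ has the property that every undercategory $(j \downarrow \iota)$ is nonempty with contractible nerve, then for any $X: J \to \cat M$ the induced map $\hocolim_I \iota^* X \to \hocolim_J X$ is a weak equivalence. First I would fix a framed model category structure on $\cat M$ (dually to the simplicial-frame setup already reviewed in Section~\ref{section: A review of homotopy limit in a model category}) so that $\hocolim_J X$ is computed by the coend formula $\hocolim_J X \cong \int^{j \in J} N(J \downarrow j)^{\vee} \otimes \widehat{X_j}$, where $N(J\downarrow j)$ is the nerve of the overcategory and $\widehat{X_j}$ is a cosimplicial frame on $X_j$; this is the dual of Definition~\ref{defi: homotopy limit explicit formula general form}. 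The map in question is then induced by the natural transformation $e_j^* : N(I \downarrow \iota^{-1}(j)\text{-type data})$, more precisely by the comparison of the two bar-type constructions.

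The key steps, in order, would be: (1) Reduce to the Bousfield--Kan model: $\hocolim_J X$ is weakly equivalent to the realization of the simplicial object $[n] \mapsto \coprod_{j_0 \to \cdots \to j_n} \widehat{X_{j_0}}$ (the bar construction $B(*, J, X)$), and similarly for $I$, provided $X$ is objectwise cofibrant — which one arranges by first cofibrantly replacing $X$ in the Reedy/projective model structure on $\cat M^J$ (this does not change the homotopy colimit). (2) Identify the map $\hocolim_I \iota^* X \to \hocolim_J X$ with the map of bar constructions $B(*, I, \iota^* X) \to B(*, J, X)$ induced by $\iota$. (3) Establish the pointwise statement: for each $j$, the map of simplicial sets $N(\iota \downarrow j) \to N(J \downarrow j)$ — or rather the relevant comparison at the level of the "weights" — is a weak equivalence; this is where the hypothesis on the undercategories $(j \downarrow \iota)$ enters, via Quillen's Theorem A, since the contractibility of $(j \downarrow \iota)$ is exactly the condition that makes $\iota$ homotopy cofinal (the dual of homotopy initial). (4) Conclude by a spectral sequence / skeletal filtration argument: the map of bar constructions is a weak equivalence because it is so on each "column," using that geometric realization (or the relevant coend) sends levelwise weak equivalences between Reedy cofibrant simplicial objects to weak equivalences.

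The main obstacle I anticipate is step (3) together with making step (4) rigorous in a general (non-simplicial, merely framed) model category $\cat M$: one must be careful that the framings/cofibrant replacements are compatible enough that the levelwise comparison really does assemble into a global weak equivalence, and that the contractibility hypothesis on $N(j \downarrow \iota)$ translates into the statement that tensoring with the map $N(\text{something}) \to N(\text{something})$ of contractible simplicial sets is a weak equivalence after applying the frame. In practice the cleanest route is probably not to reprove this from scratch but to cite it: the statement is exactly \cite[Theorem 10.?]{Dugger} (or Hirschhorn's \cite{hirschhorn2009model} Theorem 19.6.7, the dual of the cofinality statement for homotopy limits), so the honest "proof" is a one-line reference, with the remarks above serving only to indicate why it is true. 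I would therefore present it as: \emph{this is the dual of the cofinality theorem for homotopy limits, see \cite{hirschhorn2009model} or \cite{Dugger}}, and spend no further effort, since nothing in the paper requires an independent argument.
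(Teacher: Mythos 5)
Your proposal is correct and ends up exactly where the paper does: the paper's entire proof of this lemma is the citation to Theorems 6.7 and 6.9 of \cite{Dugger}, together with the observation that the arguments there (Section 9.6 of that reference) do not depend on the choice of model category. Your bar-construction/cofinality sketch is just the standard argument behind that citation, so deferring to the reference is the right call and matches the paper.
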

\begin{lemma}\label{lemma-duggerstheorem}
Let $\iota: I \to J$ be a functor between small categories and let $X: J \to \cat M$ a diagram with values in a model category. Suppose that the composition
\[\hocolim_{(j \downarrow \iota)} e_j^*(X) \to \colim_{(j \downarrow \iota)} e_j^*(X) \to X_{j}
\]
is a weak equivalence for every $j$. Then the natural map $\hocolim_I \iota^{*}X \to \hocolim_J X$ is a weak equivalence.
\end{lemma}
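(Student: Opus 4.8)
The plan is to deduce the statement from Lemma~\ref{lemma-duggersthm-2} (the homotopy-colimit analogue of Quillen's Theorem~A) by pulling $X$ back to a Grothendieck construction over which the hypothesis of the present lemma becomes a \emph{levelwise} weak equivalence. I abbreviate the comma category appearing in the statement by $\mathcal{K}_j$, so that $e_j\colon \mathcal{K}_j\to J$ is the functor already introduced and the hypothesis says that the natural augmentation $\hocolim_{\mathcal{K}_j}e_j^{*}X\to X_j$ is a weak equivalence for every $j$.

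First I would form the Grothendieck construction $G$ of the diagram of categories $j\mapsto\mathcal{K}_j$: an object of $G$ records a $j\in J$, an $i\in I$, and the relevant morphism relating $j$ and $\iota(i)$, while a morphism is a compatible pair $(\alpha\colon i\to i',\ \beta\colon j\to j')$. It carries two projections, $q\colon G\to I$ (remembering $i$) and $p\colon G\to J$ (remembering $j$), and by construction the strict fibre $p^{-1}(j)$ is $\mathcal{K}_j$, with $e_j$ the restriction of $p$. The point I would exploit is that $q$ is a \emph{right adjoint}: the section $s\colon I\to G$, $i\mapsto(\iota(i),i,\mathrm{id}_{\iota(i)})$, is left adjoint to it, since one checks directly that $\Hom_{G}(s(i'),(j,i,-))\cong\Hom_{I}(i',i)$. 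Consequently every undercategory $(i\downarrow q)$ is isomorphic to $s(i)\downarrow G$ and so has an initial object; in particular $q$ is homotopy terminal in the sense of Lemma~\ref{lemma-duggersthm-2}.

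Now put $F:=q^{*}\iota^{*}X$, the diagram on $G$ whose value at $(j,i,-)$ is $X_{\iota(i)}$, and chain the following weak equivalences.
\begin{enumerate}
\item Lemma~\ref{lemma-duggersthm-2}, applied to the homotopy terminal functor $q\colon G\to I$ and the $I$-diagram $\iota^{*}X$, gives $\hocolim_{G}F\xrightarrow{\sim}\hocolim_{I}\iota^{*}X$.
\item The Fubini theorem for homotopy colimits indexed by a Grothendieck construction (see e.g.\ Section~10 of \cite{Dugger}, or \cite{hirschhorn2009model}) gives $\hocolim_{G}F\simeq\hocolim_{j\in J}\bigl(\hocolim_{\mathcal{K}_j}F|_{\mathcal{K}_j}\bigr)$. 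Since $F|_{\mathcal{K}_j}=e_j^{*}X$, the inner term is $\hocolim_{\mathcal{K}_j}e_j^{*}X$, and the induced map of $J$-diagrams $\bigl(j\mapsto\hocolim_{\mathcal{K}_j}e_j^{*}X\bigr)\to X$ is precisely the augmentation in the hypothesis.
\item By hypothesis every component of that augmentation is a weak equivalence, so applying $\hocolim_{J}$, which preserves levelwise weak equivalences, produces $\hocolim_{J}\bigl(j\mapsto\hocolim_{\mathcal{K}_j}e_j^{*}X\bigr)\xrightarrow{\sim}\hocolim_{J}X$.
\end{enumerate}
Composing (1)--(3) gives a zig-zag of weak equivalences $\hocolim_{I}\iota^{*}X\simeq\hocolim_{J}X$; it then remains to verify that this zig-zag realises the canonical comparison map, whence the latter is a weak equivalence. (One could equally package (2)--(3) as the assertion that the homotopy left Kan extension $p_{!}F$ is levelwise weakly equivalent to $X$ together with the formal identity $\hocolim_{J}\circ p_{!}\simeq\hocolim_{G}$, at the cost of handling the pointwise formula for $p_{!}$.)

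The step I expect to demand the most care is this last one: checking that the zig-zag assembled from (1)--(3) is genuinely the natural map $\hocolim_{I}\iota^{*}X\to\hocolim_{J}X$ rather than some other comparison — the usual place where coherence issues hide, and also the only point where the precise bookkeeping of over- versus under-categories (and the variance of $X$) really intervenes. The individual ingredients, namely the Fubini theorem, the fact that a right adjoint has contractible comma categories, and the homotopy-invariance of $\hocolim$, are all standard and can be quoted from \cite{Dugger} or \cite{hirschhorn2009model}.
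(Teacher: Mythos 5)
The paper offers no argument here at all: it simply cites Theorems 6.7 and 6.9 of \cite{Dugger} and notes that Dugger's proofs carry over to an arbitrary model category. Your proposal supplies an actual proof, and it is the standard one in a different packaging: the Grothendieck construction $G$ is (up to nerve/realization) the bisimplicial object underlying Dugger's two-sided bar argument, and your steps (1)--(3) are the two ways of collapsing it. So the route is legitimate and, if anything, more informative than the citation; the ingredients (adjoints have contractible comma categories, the Thomason/Fubini theorem for homotopy colimits over a Grothendieck construction, homotopy invariance of $\hocolim$) are all available in a general model category.

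There is, however, one concrete error of variance, at exactly the point you flagged as delicate. With $\mathcal{K}_j=(j\downarrow\iota)$ as literally written in the statement, so that $G$ has objects $(j,i,f\colon j\to\iota(i))$, your adjunction goes the wrong way: one computes $\Hom_G\bigl((j,i,f),s(i_1)\bigr)\cong\Hom_I(i,i_1)$ (the $J$-component of such a morphism is forced to equal $\iota(\alpha)\circ f$), so $s$ is a \emph{right} adjoint to $q$, which makes $q$ homotopy \emph{initial} rather than homotopy terminal, and step (1) does not apply; the claimed isomorphism $\Hom_{G}(s(i'),(j,i,f))\cong\Hom_{I}(i',i)$ fails because the $J$-component is then unconstrained data. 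Note also that for a covariant $X$ the augmentation $\colim_{(j\downarrow\iota)}e_j^*X\to X_j$ in the hypothesis does not even exist, since $f\colon j\to\iota(i)$ induces $X_j\to X_{\iota(i)}$. Both problems have the same cure: the comma category should be the overcategory $(\iota\downarrow j)$, with objects $(i,\iota(i)\to j)$, as in Dugger's Theorem 6.9 --- the statement as printed in the paper is itself miswritten on this point. With that convention your formula $\Hom_G(s(i'),(i,j,f))\cong\Hom_I(i',i)$ is correct (the $J$-component is forced to be $f\circ\iota(\alpha)$), $q$ is homotopy terminal, $p^{-1}(j)=(\iota\downarrow j)$, the augmentation exists, and the rest of your argument --- including the identification of the zig-zag with the canonical comparison map, which arises from the natural transformation $\iota\circ q\Rightarrow p$ given by the structure morphisms $f$ --- goes through.
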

\begin{proof} [Proofs] For topological spaces these are Theorems 6.7 and 6.9 of \cite{Dugger} and the proofs (in Section 9.6) do not depend on the choice of model category.
\end{proof}

 \begin{lemma}\label{lemma-simplifykan}
 $Tw_{perf}(X,\mathcal R) \simeq \hocolim_{\mathcal U \in HC(X)^{op}} Tw_{perf}(X, \mathcal R, \mathcal U)$ where we take the homotopy colimit over the opposite of the category of locally finite, split $\mathcal P$-hypercovers $\mathcal U$ of $X$.
  \end{lemma}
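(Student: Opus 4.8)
The plan is to unwind the definition of $Tw_{perf}(X,\cat R)$ and reduce the statement to a cofinality property of hypercovers, which we can feed into Lemma \ref{lemma-duggersthm-2}. By construction $Tw_{perf}(X,\cat R)$ is the value at $X$ of the homotopy left Kan extension along $u^{op}\colon \cat C_{V}^{op}\to \cat C^{op}$ of the functor $T\colon (Y,\cat V)\mapsto Tw_{perf}(Y,\cat R_{Y},\cat V)$. Using the pointwise formula for homotopy left Kan extensions (Section 10 of \cite{Dugger}), this is $\hocolim_{j\in (u^{op}\downarrow X)} T(j)$, where the comma category $(u^{op}\downarrow X)$ has objects $((Y,\cat V),f)$ with $f\colon X\to Y$ a morphism of $\cat C$, and morphisms the morphisms of $\cat C_{V}$ lying over morphisms of $\cat C$ compatible with the structure maps. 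There is a fully faithful functor $\iota\colon HC(X)^{op}\to (u^{op}\downarrow X)$, $\cat V\mapsto ((X,\cat V),\mathrm{id}_{X})$, whose image is the ``fibre over $X$'', and $\iota^{*}T$ is exactly the functor $\cat V\mapsto Tw_{perf}(X,\cat R_{X},\cat V)$ on $HC(X)^{op}$. Thus the right-hand side of the lemma is $\hocolim_{HC(X)^{op}}\iota^{*}T$, and the assertion becomes that the canonical map $\hocolim_{HC(X)^{op}}\iota^{*}T\to \hocolim_{(u^{op}\downarrow X)}T$ is a weak equivalence. By Lemma \ref{lemma-duggersthm-2} it is enough to prove that $\iota$ is homotopy terminal.

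Next I would check homotopy terminality of $\iota$, i.e.\ that for every object $j=((Y,\cat V'),f)$ the undercategory $(j\downarrow \iota)$ is nonempty with contractible nerve. Unwinding the definitions, an object of $(j\downarrow\iota)$ is a hypercover $\cat V''\in HC(X)$ equipped with a map of augmented simplicial presheaves $\cat V''\to \cat V'$ over $y_{f}\colon y_{X}\to y_{Y}$; writing $f^{*}\cat V':=\cat V'\times_{y_{Y}}y_{X}$, this is the same as an object of $HC(X)$ together with a refinement map to $f^{*}\cat V'$, and morphisms in $(j\downarrow\iota)$ are refinements over $f^{*}\cat V'$. Since the topology $\tau$ is stable under base change, $f^{*}\cat V'\to y_{X}$ is again locally a trivial fibration, hence a hypercover of $X$ (a priori neither basal nor split). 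Our standing hypotheses — $(\cat C,\tau)$ a Verdier site, $X$ hereditarily paracompact, $\cat P$ a basis of $p$-good opens — together with the refinement theorems of \cite{dugger2004hypercovers} (Section 8) guarantee that every such hypercover of $X$ admits a refinement that is simultaneously basal, split, locally finite and a $\cat P$-hypercover, hence lies in $HC(X)$. Applied to $f^{*}\cat V'$, this produces an object of $(j\downarrow\iota)$, so the undercategory is nonempty.

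The remaining, and main, step is the contractibility of $N(j\downarrow\iota)$, and this is precisely where the Verdier condition is indispensable. Up to passing to the opposite, $(j\downarrow\iota)$ is the category whose objects are the $HC(X)$-refinements of the fixed hypercover $f^{*}\cat V'$ and whose morphisms are the refinement maps over $f^{*}\cat V'$, and on a Verdier site such a category is cofiltered (and in particular has contractible nerve): a common refinement of $\cat V_{1}''$ and $\cat V_{2}''$ is obtained by forming the fibre product $\cat V_{1}''\times_{f^{*}\cat V'}\cat V_{2}''$ — again a hypercover, since $\tau$ is a Verdier topology — and then refining it into $HC(X)$ as in the previous step, while a parallel pair of refinement maps is equalized after a further refinement because on a Verdier site any two refinements between fixed hypercovers agree after refining the source (Section 8 of \cite{dugger2004hypercovers}). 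I expect this last point to be the principal obstacle: essentially all of the structural input lives in the theory of Verdier sites, and the delicate part is to verify that the operations used (fibre products, and passage to basal/split/locally finite/$\cat P$-refinements) can be carried out simultaneously and compatibly so that one never leaves $HC(X)$; I would import these facts from \cite{dugger2004hypercovers} rather than reprove them. Granting this, $N(j\downarrow\iota)$ is contractible, $\iota$ is homotopy terminal, and Lemma \ref{lemma-duggersthm-2} yields Lemma \ref{lemma-simplifykan}.
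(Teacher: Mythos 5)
Your reduction of the statement to homotopy terminality of $\iota: HC(X)^{op}\to \cat C_{V}^{op}/X$ via the pointwise formula for homotopy Kan extensions and Lemma \ref{lemma-duggersthm-2} is exactly the paper's strategy, as is the nonemptiness step via refinement of $f^{*}\cat V'$ into $HC(X)$. The gap is in the contractibility step, where you assert that the category of $HC(X)$-refinements of $f^{*}\cat V'$ is cofiltered. Both halves of that claim fail. For binary products: the fibre product $\cat V_{1}''\times_{f^{*}\cat V'}\cat V_{2}''$ of two arbitrary refinements over $f^{*}\cat V'$ is in general \emph{not} a hypercover of $X$ --- already in degree $0$ it only retains intersections of components of $V''_{1,0}$ and $V''_{2,0}$ that refine the \emph{same} component of $V'_0$, so a point lying in components that refine different pieces of $V'_{0}$ need not be covered. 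The Verdier condition does not rescue this; the relevant criterion (Lemma 7.1 of \cite{Stacks13}) requires one of the two legs to be a \emph{relative} hypercover, i.e.\ $U_{0}\to V_{0}$ and $U_{n}\to M_{n}U\times_{M_{n}V}V_{n}$ must be coverings, which a generic refinement map is not. For equalizers: two parallel refinement maps cannot in general be equalized by further refinement. Take $A_{0}=U$ and $B_{0}=V_{1}\sqcup V_{2}$ with $U\subseteq V_{1}\cap V_{2}$; the two maps sending $U$ into the $V_{1}$- and $V_{2}$-components land in distinct summands of a coproduct and so remain distinct after any refinement of the source. What is true (Artin--Mazur, and \cite{dugger2004hypercovers}) is that the \emph{homotopy category} of hypercovers, with morphisms taken up to simplicial homotopy, is cofiltered --- but that is not the category whose nerve you need to contract here.

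The paper's proof circumvents both problems by constructing, inductively using paracompactness, a single object $\cat U\in HC(X)$ that is a \emph{relative} hypercover over $f^{-1}(\cat V)$ (so that $U_{0}\to V_{0}$ and $U_{n}\to M_{n}U\times_{M_{n}V}V_{n}$ are coverings). Then $\cat W\mapsto \cat W\times_{f^{-1}(\cat V)}\cat U$ is a well-defined endofunctor of the undercategory (the fibre product is a hypercover precisely because the $\cat U$-leg is relative, by Lemma 7.1 of \cite{Stacks13}, and one checks it stays locally finite, split and $\cat P$), and the two projections give natural transformations from this endofunctor to the identity and to the constant functor at $\cat U$. This zig-zag of natural transformations contracts the nerve directly, with no cofilteredness needed. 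To repair your argument you would either need to reproduce this construction of a distinguished relative hypercover, or pass to homotopy classes of refinements, which would then require a separate argument that this passage does not change the homotopy colimit.
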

  \begin{proof}\label{lemma-pointwisekan}
  We first use the pointwise computation of a homotopy left Kan extensions to deduce that $Tw_{perf}(X, \cat R) = \hocolim_{\cat C_{V}^{op}/X}Tw_{perf}(Y, \cat R_{Y}, \cat V)$. This is a special case of base change for relative colimits.
  See Proposition 10.2 of \cite{Dugger}, or Theorem 9.6.5 in \cite{RadulescuBanu06} for base change in the more general setting of relative categories.

  To simplify this computation we then use \ref{lemma-duggersthm-2} to show that the homotopy colimit can also be computed over $HC(X)^{op}$. To do this we need to show that the inclusion $\iota: HC(X)^{op} \to \cat C_{V}^{op}/X$ is homotopy terminal. For every $(Y, \cat V) \in \cat C_{V}$ with $f: X \to Y$ we need to show that $\iota \downarrow (Y, \cat V)$ is contractible.

We note that $f^{-1}(\cat V)$ is a hypercover of $X$, and it is clear that working over $(Y, \cat V)$ is equivalent to working over $f^{-1}(\cat V)$.

Contractibility of the overcategory follows if we can find an object $\cat U \in HC(X)$ such that the overcategory has all products with $\cat U$.
First assume we have a well-defined endofunctor $\cat W \to \cat W \times_{f^{-1}(\cat V)} \cat U$. Then the projection maps give natural transformations to the identity functor and projection to $\cat U$. On taking the nerve this gives a homotopy equivalence from the nerve of $\iota \downarrow f^{-1}(\cat V)$ to a point.

We will now construct a (locally finite, split) $\cat P$-refinement $\cat U$ of $f^{-1}(\cat V)$ that behaves like a hypercover of $f^{-1}(\cat V)$. We will now abuse notation and write $f^{-1}(\cat V)$ as $\cat V$ for ease of notation.
To be precise, we will inductively construct $\cat U$ as a hypercover over $\cat V$ that moreover satisfies the condition that $U_{0} \to V_{0}$ and all
$U_{n} \to M_{n}U\times_{M_{n}V} V_{n}$ are coverings.
For the base case we write $V_{0} = \amalg_{i}V_{0,i}$ and choose a locally finite covering of each $V_{0,i}$ using paracompactness. $U_{0}$ is their disjoint union. To check this covering is locally finite take $x \in X$, then $f(x) \in Y$ has a neighbourhood $W$ only intersecting finitely many $V_{0,i}$, and $f^{-1}(W)$ intersects only finitely many summands of $\cat U_{0}$.
For the induction step we note that $M_{n}U\times_{M_{n}V} V_{n}$ is again a coproduct of representables (this is true for $M_{n}$ by Lemma 8.4 in \cite{dugger2004hypercovers}, and then it easily follows for the fiber product). So we can find a locally finite $\cat P$-cover $U^{*}_{n} \to M_{n} U\times_{M_{n} V}V$ by the same argument as above, and let $U_{n} = U^{*}_{n} \amalg L_{n}U$. To see that this map is basal see the proof of Theorem 8.6 in \cite{dugger2004hypercovers}. It is split by construction.

 Given $\cat U \to \cat V$ as above and any refinement $\cat W \in HC(X)$ of $\cat V$ we can construct the levelwise pullback. It clearly satisfies the universal property. Now Lemma 7.1 in \cite{Stacks13} shows that a fiber product $\cat U\times_{\cat V}\cat W$ of hypercovers is a hypercover if $U_{0} \to V_{0}$ and
$U_{n} \to M_{n}U\times_{M_{n}V} V_{n}$ are coverings, which is true in our case by construction of $\cat U$. It remains to check that $\cat U\times_{\cat V}\cat W$ is a $\cat P$-hypercover, locally finite and split. It is locally finite  as $\cat U$ and $\cat W$ are. Similarly, it is a disjoint union of objects in $\cat P$ since the levelwise pullback is just a disjoint union of intersections, and $\cat P$ is closed under intersections. Finally, $\cat U\times_{\cat V}\cat W$ is split.
 We use that a hypercover $\mathcal U$ is split if $U_{n} = \amalg_{[n]\to [k]} N_{k}$ where the index goes over all surjections, see Definition \ref{defi: split simplicial spaces}.  If now $V_{n} = \amalg_{[n]\to [k]} M_{k}$ and $W_{n} \amalg_{[n]\to [k]} L_{k}$ then we can write
 \[
 U_{n}\times_{V_{n}}W_{n} = \amalg_{[n\to k_{1}], [n \to k_{2}], [n \to k_{3}]} N_{k_{1}}\times_{M_{k_{2}}}L_{k_{3}}.
 \]
  We define $K_{m} = \amalg^{*}_{[m\to k_{1}], [m\to k_{2}], [m \to k_{3}]} N_{k_{1}}\times_{M_{k_{2}}}L_{k_{3}}$ where the asterisk indicates that we leave out from the indexing set all pairs of maps which factor through a common $[m \to m-1]$. Then $\cat U\times_{\cat V}\cat W$  is split with free degeneracies $K_{n}$ in degree $n$.
  \end{proof}

 \begin{lemma}\label{lemma-goodhypercovers}
 There is a weak equivalence $Tw_{perf}(X,\mathcal R, \mathcal U) \simeq Tw_{perf}(X, \mathcal R)$ for any hypercover
  $\mathcal U$  in $HC$. In particular $Tw_{perf}(X, \cat R) \simeq Mod(\cat R(\cat X))^{cf}_{perf}$ if $X$ itself is in $\mathcal P$. Here the right hand side is the dg-category of fibrant and cofibrant complexes of $\cat R(X)$-modules which are perfect.
 \end{lemma}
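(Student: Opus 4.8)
The plan is to combine Lemma \ref{lemma-simplifykan} with two facts: that refining a hypercover does not change twisted perfect complexes up to quasi-equivalence, and that, although $HC(X)$ is not cofiltered, its nerve is contractible in a way that lets us compute the homotopy colimit over it. By Lemma \ref{lemma-simplifykan}, $Tw_{perf}(X,\mathcal R)\simeq\hocolim_{\mathcal V\in HC(X)^{op}}Tw_{perf}(X,\mathcal R,\mathcal V)$, so it suffices to identify this homotopy colimit with the value of the diagram at an arbitrary $\mathcal U\in HC(X)$.

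The first and decisive step is to show that $Tw_{perf}(X,\mathcal R,-)\colon HC(X)^{op}\to\dgCat$ sends every refinement to a weak equivalence. Since Propositions \ref{prop: Cpx of open cover is Reedy fibrant} and \ref{prop: twisted perfect complexes are totalization} extend to split hypercovers, for $\mathcal W\in HC(X)$ we have $Tw_{perf}(X,\mathcal R,\mathcal W)\simeq\holim_{\mathbf{\Delta}}Perf(\mathcal W_\bullet)$; a refinement $\mathcal U\to\mathcal V$ induces a map of cosimplicial dg-categories $Perf(\mathcal V_\bullet)\to Perf(\mathcal U_\bullet)$, hence a map of homotopy limits, and one must show it is a quasi-equivalence. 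The content here is hyperdescent for the presheaf $Perf$ on $p$-good spaces: since a refinement $\mathcal U_\bullet\to\mathcal V_\bullet$ is a local weak equivalence of simplicial presheaves, both homotopy limits compute the derived global sections of $Perf$ over $X$. This is the classical descent for perfect complexes, but I would give a self-contained argument by applying the \v{C}ech-to-hypercover comparison of \cite{dugger2004hypercovers} together with \v{C}ech descent for $Perf$, which follows from the explicit totalization of Proposition \ref{prop: twisted perfect complexes are totalization}. I expect this step to be the main obstacle.

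Next I would handle the homotopy colimit. Fix, as in the proof of Lemma \ref{lemma-simplifykan}, an object $\mathcal U_0\in HC(X)$; the levelwise fibre product $\mathcal W\mapsto\mathcal W\times_{y_X}\mathcal U_0$ is again a hypercover (Lemma 7.1 of \cite{Stacks13}, the relevant maps being coverings), is locally finite, split (by the computation at the end of the proof of Lemma \ref{lemma-simplifykan}), and a $\mathcal P$-hypercover since $\mathcal P$ is closed under intersection. Thus it defines an endofunctor $P$ of $HC(X)$ equipped with natural transformations of refinements $P\Rightarrow id$ and $P\Rightarrow c\mathcal U_0$. Applying $Tw_{perf}(X,\mathcal R,-)$ and using the first step, both induced natural transformations become levelwise weak equivalences; since the nerves of these same transformations exhibit $|N(HC(X)^{op})|$ as contractible (the self-map $N(P)$ is homotopic both to the identity and to a constant), the homotopy colimit over $HC(X)^{op}$ of $Tw_{perf}(X,\mathcal R,-)$ is equivalent to that of the constant diagram at $Tw_{perf}(X,\mathcal R,\mathcal U_0)$, hence to $Tw_{perf}(X,\mathcal R,\mathcal U_0)$ itself. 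Finally, for an arbitrary $\mathcal U\in HC(X)$ the refinements $\mathcal U\leftarrow\mathcal U\times_{y_X}\mathcal U_0\rightarrow\mathcal U_0$ together with the first step give $Tw_{perf}(X,\mathcal R,\mathcal U)\simeq Tw_{perf}(X,\mathcal R,\mathcal U_0)$, which combined with Lemma \ref{lemma-simplifykan} yields $Tw_{perf}(X,\mathcal R)\simeq Tw_{perf}(X,\mathcal R,\mathcal U)$.

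For the last assertion, assume $X\in\mathcal P$ and take $\mathcal U$ to be the constant hypercover with $\mathcal U_n=y_X$ and identity structure maps; it lies in $HC(X)$, being a hypercover (each $U_n\to M_nU$ is the identity), locally finite, split (only $N_0=y_X$ is non-degenerate), and a $\mathcal P$-hypercover since $X\in\mathcal P$. Then $Tw_{perf}(X,\mathcal R,\mathcal U)$ is the totalization of the constant cosimplicial dg-category $Perf(X)$, which is Reedy fibrant, so by Theorems \ref{thm: totalization in dgCat} and \ref{thm: totalization and homotopy limit are weakly equi for fibrant object} (using that $\mathbf{\Delta}$ has a terminal object, hence contractible nerve) it is quasi-equivalent to $Perf(X)$, the dg-category of strictly perfect complexes of $\mathcal R$-modules on $X$. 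Since $X$ is $p$-good, the terms of a strictly perfect complex are quasi-coherent with no higher cohomology, so $\Gamma(X,-)$ is exact on them and induces a quasi-equivalence $Perf(X)\xrightarrow{\sim}Mod(\mathcal R(X))^{cf}_{perf}$, as in Section 3.3.2 of \cite{ZhWei20151}; composing with the previous paragraph gives $Tw_{perf}(X,\mathcal R)\simeq Mod(\mathcal R(X))^{cf}_{perf}$.
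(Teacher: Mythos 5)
Your overall architecture (reduce via Lemma \ref{lemma-simplifykan}, establish refinement invariance of $Tw_{perf}(X,\cat R,-)$, then a cofinality/contractibility argument over $HC(X)^{op}$, then the trivial hypercover for the case $X\in\cat P$) parallels the paper's, and your second step — the product endofunctor $\cat W\mapsto \cat W\times_{y_X}\cat U_0$ giving natural transformations to the identity and to a constant — is a reasonable variant of the paper's use of Lemmas \ref{lemma-duggersthm-2} and \ref{lemma-duggerstheorem} applied to the subcategory $HC_{\cat U}$ of refinements of $\cat U$. But there is a genuine gap at exactly the point you flag as "the main obstacle": the refinement-invariance step is not proved, and the route you sketch for it does not work. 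You propose to deduce it from "\v Cech descent for $Perf$, which follows from the explicit totalization of Proposition \ref{prop: twisted perfect complexes are totalization}." That proposition only identifies $Tw_{perf}(X,\cat R,\cat U)$ with $\holim Perf(\cat U_\bullet)$ for a \emph{fixed} (hyper)cover; it says nothing about comparing two different hypercovers, and asserting hyperdescent for $Perf$ at this stage is essentially assuming the theorem the whole section is trying to prove. Without an independent input, the argument is circular.

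The paper closes this gap by importing an external comparison: by Theorem 3.15 and the functor $\cat S$ of Definition 3.2 in \cite{ZhWei20151}, composed with functorial fibrant--cofibrant replacement, each $Tw_{perf}(X,\cat R,\cat U)$ for $\cat U\in HC$ is DK-equivalent to $QCoh(X,\cat R)^{cf}_{perf}$ (quasi-essential surjectivity comes from loc.\ cit., and quasi-fully-faithfulness from compatibility with shifts plus the isomorphism on $H^0$). Refinement invariance is then immediate since both sides are identified with the same hypercover-independent model. If you want to keep your outline, you must either cite such a comparison result (e.g.\ the enhancement results of \cite{ZhWei20151}, or known descent for perfect complexes as in \cite{Toen05a}) or prove directly that for a refinement $\cat V\to\cat U$ of $p$-good hypercovers the restriction $Tw_{perf}(X,\cat R,\cat U)\to Tw_{perf}(X,\cat R,\cat V)$ is a quasi-equivalence; neither is supplied by the results established earlier in this paper. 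The remainder of your argument (contractibility of $N(HC^{op})$, the hocolim of a diagram of weak equivalences over a category with contractible nerve, and the trivial-hypercover computation for $X\in\cat P$) is fine modulo standard facts, though the last step should also address why strictly perfect complexes of sheaves on a $p$-good $X$ are equivalent to fibrant--cofibrant perfect complexes of $\cat R(X)$-modules, which again rests on \cite{ZhWei20151}.
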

 \begin{proof}
 Let us fix $\cat U$. By Lemma \ref{lemma-simplifykan} it suffices to compare $Tw(X, \cat R, \cat U)$ with $\hocolim_{HC^{op}}Tw_{perf}(X, \cat R, \cat W)$.

 We consider the category $HC_{\cat U}$ of all hypercovers in $HC$ that are refinements of $\cat U$.
 First, we examine the inclusion $\iota: HC_{\cat U} \to HC$. We claim that $HC_{\mathcal U}^{op}$ is homotopy terminal in $HC^{op}$, i.e.\ that $\iota$ is homotopy initial. We have to show that for any $\mathcal V \in HC$ the overcategory $(\iota \downarrow \mathcal V)$ is nonempty and contractible.

 This follows since it has a terminal object given by $\mathcal U\times_{X}\mathcal V$.
 This clearly satisfies the universal property, we just have to show it is in $HC$. Note that the fiber product is given by $U_{n}\times_{X}V_{n}$ in degree $n$. It is a hypercover by Lemma 7.2 in \cite{Stacks13}
  and is easily seen to be locally finite. Moreover, in every degree it is a coproduct of finite intersections of objects in $\cat P$, this $\cat U \times_{X} \cat V$ is a $\mathcal P$-hypercover.
 Finally, the fiber product is split by the argument from the previous lemma.

  By Lemma \ref{lemma-duggersthm-2} we now have
 \[
 \hocolim_{\cat V \in HC_{\cat U}^{op}} Tw_{perf}(X, \cat R, \cat V) \simeq \hocolim_{\cat W \in HC^{op}} Tw_{perf}(X, \cat R, \cat W) \simeq Tw_{perf}(X, \cat R).
 \]

 Secondly, we consider the inclusion of the 1-object category $\{\cat U \}$ in $HC_{\cat U}^{op}$. We claim that $Tw_{perf}(X, \cat R, \cat U) \simeq Tw_{perf}(X, \cat R, \cat V)$ for any $\cat V \in HC_{\cat U}^{op}$.

By Theorem 3.15 of \cite{ZhWei20151} we know that both sides are enhancements of $D_{perf}(QCoh(X))$, i.e.\ their homotopy categories agree.
But the results of that paper readily imply a stronger result. The functor $\cat S$ defined in Definition 3.2 of \cite{ZhWei20151} is a dg-functor form $Tw_{perf}(X, \cat R, \cat U)$ to $QCoh(X, \cat R)$. The right hand side is a model category and we may compose $\cat S$ with functorial fibrant cofibrant replacement. We claim this functor gives a DK equivalence $Tw_{perf}(X, \cat R, \cat U) \to QCoh(X,\cat R)^{cf}_{perf}$. Here the homotopy category of the right hand side is the derived category. That the functor lands in perfect complexes and is quasi-essentially surjective is immediate from the results of \cite{ZhWei20151}. Since the functor is compatible with shifts and induces isomorphisms on $H^{0}$ it induces isomorphisms on all the cohomology groups of the Hom-complexes. This shows it is quasi-fully faithful.
  Thus $Tw_{perf}(X, \cat R, \cat U) \simeq Tw_{perf}(X, \cat R, \cat V)$ as claimed.

 But then by Lemma \ref{lemma-duggerstheorem} applied to the inclusion of $\{\mathcal U\}$ in $HC^{op}_{\cat U}$ we have
 \[
 Tw_{perf}(X, \cat R, \cat U) \simeq \hocolim_{\cat V \in HC_{\cat U}^{op}}Tw_{perf}(X, \cat R, \cat V).\]
 Putting this together with the first weak equivalence completes the proof.

 For the special case we just consider the nerve of the trivial cover $X \to X$.
 \end{proof}

Moreover, given any $f: X \to Y$ the induced map $Tw_{perf}(Y) \to Tw_{perf}(X)$ may be represented by the natural restriction $f^{*}: Tw_{perf}(Y,\mathcal V) \to Tw_{perf}(X, \mathcal U)$ for any pair of good hypercovers compatible with $f$. (The universal property of the left Kan extension implies that there is natural transformation $Tw_{perf}(X, \mathcal U) \to Tw_{perf}(X)$ in the homotopy category.)

We now turn our attention to showing that $Tw_{perf}$ is a stack.
Hypercovers allow us to define the correct higher sheaf condition for presheaves with values in a homotopy category.
\begin{defi}\label{defi: stack}
A presheaf $F$ of dg-categories on a site $(\cat C, \tau)$ is a \emph{stack} or \emph{hypersheaf} if it satisfies the following condition:
$F(W) \simeq \holim_{i} F(U_{i})$ for any hypercover $\mathcal U = \{U_{i}\}$ of $W$. We also say that $F$ \emph{satisfies descent for hypercovers}.
\end{defi}
\begin{rmk}
Stacks are just the fibrant objects in the model category obtained by localizing at hypercovers, see \cite{dugger2004hypercovers}.
\end{rmk}

Given a presheaf of categories this condition asserts that the global data is glued from local data. Thus it is very important that categories of sheaves on a space satisfy this condition. We will show here directly that twisted perfect complexes do form a stack.
\begin{rmk}\label{rmk: cpx is a stack}
Theorem 2.8 of \cite{simpson05geometricity} asserts that the functor $Cpx^{cf}$ is a stack, where $Cpx^{cf}(U)$ is the subcategory of fibrant and cofibrant objects in $Cpx(U)$, and a model for the $(\infty,1)$-category of sheaves of complexes.
In the algebraic geometric setting this has been shown for example in \cite{Toen05a} or, in great generality, in the language of $\infty$-categories in \cite{Lurie11c, Lurie11g, Gaitsgory14}.
 \end{rmk}

A collection $S$ of hypercovers is \emph{dense} if every hypercover can be refined by a hypercover in $S$, and we now show that the hypercovers in $HC(X)$ are dense.
\begin{lemma}\label{lemma: existence of p-good refinement of hypercover}
If $X$ is a hereditarily paracompact topological space with a basis $\cat P$ then any hypercover in the site of open subsets of $X$ has a split, locally finite $\mathcal P$-refinement.
\end{lemma}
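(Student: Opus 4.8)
The plan is to construct the required refinement $\mathcal V_\bullet \to \mathcal U_\bullet$ by induction on simplicial degree, following the basal-refinement construction of \cite{dugger2004hypercovers} (Theorem~8.6) and the inductive argument already used in the proof of Lemma~\ref{lemma-simplifykan}, while additionally keeping track of local finiteness and membership in $\mathcal P$. The one input needed is the following consequence of the hypotheses on $X$: every open cover of every open subspace of $X$ admits a locally finite refinement all of whose members lie in $\mathcal P$; indeed, since $\mathcal P$ is a basis one first refines to a cover by members of $\mathcal P$, and since $X$ is hereditarily paracompact the subspace in question is paracompact and one extracts a locally finite refinement. For the base case, $U_{0}\to y_{X}$ is an open cover of $X$, so this input yields a locally finite $\mathcal P$-cover $V_{0}\to U_{0}$ of $X$, and we set $N_{0}=V_{0}$.

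For the inductive step, assume $V_{0},\dots,V_{n-1}$ have been built, forming the $(n-1)$-truncation of a split, locally finite $\mathcal P$-hypercover together with a compatible refinement map to the $(n-1)$-truncation of $\mathcal U_\bullet$. By Lemma~8.4 of \cite{dugger2004hypercovers} the matching object $M_{n}V$ is a coproduct of representables; since every morphism in the site of open subsets of $X$ is an inclusion, the fiber product $M_{n}V\times_{M_{n}U}U_{n}$ is again a coproduct of opens, and the projection $M_{n}V\times_{M_{n}U}U_{n}\to M_{n}V$ is a covering, being the base change of the covering $U_{n}\to M_{n}U$. Applying the input above componentwise over $M_{n}V$ we obtain a locally finite $\mathcal P$-cover $W_{n}\to M_{n}V\times_{M_{n}U}U_{n}$, and we set $V_{n}=W_{n}\amalg L_{n}V$ with non-degenerate part $N_{n}=W_{n}$. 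Then $V_{n}$ is split by construction; it is locally finite because $L_{n}V$ is a coproduct over the finitely many surjections $[n]\twoheadrightarrow[k]$ with $k<n$ of copies of the locally finite $N_{k}$, and $W_{n}$ is locally finite; it is a coproduct of objects of $\mathcal P$ since both $W_{n}$ and the summands of $L_{n}V$ are; the map $V_{n}\to M_{n}V$ is still a covering because it contains the covering $W_{n}\to M_{n}V$; and the refinement map $V_{n}\to U_{n}$ is defined on $W_{n}$ through $W_{n}\to M_{n}V\times_{M_{n}U}U_{n}\to U_{n}$ and on $L_{n}V$ through the degeneracies together with the maps $V_{k}\to U_{k}$ already constructed, compatibility being a routine check.

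Carrying out the induction yields a split, locally finite $\mathcal P$-hypercover $\mathcal V_\bullet$ equipped with a refinement $\mathcal V_\bullet \to \mathcal U_\bullet$, which is precisely the claim. The only delicate point is the bookkeeping in the inductive step --- verifying that $M_{n}V$ is a coproduct of representables and that local finiteness and the $\mathcal P$-condition are preserved when passing through matching and latching objects --- but this repeats the argument already given in the proof of Lemma~\ref{lemma-simplifykan}, so no genuinely new ingredient is required beyond combining it with the standard basal-refinement construction.
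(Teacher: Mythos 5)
Your proof follows the same route as the paper's: refine $U_0$ to a locally finite $\mathcal P$-cover using hereditary paracompactness and the basis, then run the inductive basal-refinement construction of Theorem 8.6 of \cite{dugger2004hypercovers} (as in the proof of Lemma \ref{lemma-simplifykan}), refining $M_nV\times_{M_nU}U_n \to M_nV$ at each stage and adjoining the latching object to keep the result split. The paper states this more tersely, deferring the inductive step to the cited theorem, but the content is the same; your write-up is correct.
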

\begin{proof}
This follows from the proof of Theorem 8.6 in \cite{dugger2004hypercovers}, which shows the existence of a split refinement, and we just need to make the refinement be a locally finite $\mathcal P$-hypercover as well.

 In more details let $\mathcal U\to X$ be a hypercover and we know $U_0\to X$ is a cover. Since covers in a Verdier site are generated by a basis, there exists a family $W_a\to X$ and $\coprod_a W_a\to X$ refines $U_0\to X$. As $X$ is paracompact we can further obtain a locally finite $\cat P$-refinement of $\coprod_a W_a\to X$ and we denote it by $V_0$. The rest of the proof is the same as the induction procedure in \cite{dugger2004hypercovers} Theorem 8.6 and the refinement as above.
\end{proof}
\begin{coro}\label{corollary: existence of common refinements}
Moreover, any two hypercovers $U$ and $V$ have a common refinement as above.
\end{coro}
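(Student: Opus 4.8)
The plan is to reduce the statement to Lemma \ref{lemma: existence of p-good refinement of hypercover} by first producing \emph{some} common refinement and then improving it. Given two hypercovers $\cat U \to X$ and $\cat V \to X$, I would form the levelwise fiber product $\cat U \times_{X} \cat V$, whose $n$-th term is $U_{n} \times_{X} V_{n}$. By Lemma 7.2 of \cite{Stacks13} this is again a hypercover of $X$ --- this is the very fact already invoked in the proof of Lemma \ref{lemma-goodhypercovers} --- and the two canonical projections exhibit $\cat U \times_{X} \cat V$ as a refinement of both $\cat U$ and $\cat V$.

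This fiber product need not be split, locally finite, or a $\cat P$-hypercover, so the second step is to apply Lemma \ref{lemma: existence of p-good refinement of hypercover} to $\cat U \times_{X} \cat V$, obtaining a split, locally finite $\cat P$-refinement $\cat W \to \cat U \times_{X} \cat V$, i.e.\ an object of $HC(X)$. Since the composition of two refinements is a refinement, the composites $\cat W \to \cat U \times_{X} \cat V \to \cat U$ and $\cat W \to \cat U \times_{X} \cat V \to \cat V$ exhibit $\cat W$ as a common refinement of $\cat U$ and $\cat V$ that lies in $HC(X)$; this is precisely a common refinement ``as above''.

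There is essentially no obstacle here beyond bookkeeping: everything reduces to ingredients already in hand. The one point worth making explicit is that a common refinement must be produced \emph{before} the previous lemma can be invoked, and the fiber product over $X$ supplies exactly that; transitivity of refinements and the stability of hypercovers under fiber product over $X$ are both already used earlier in the section, so the argument is a one-line composition of those two facts with Lemma \ref{lemma: existence of p-good refinement of hypercover}.
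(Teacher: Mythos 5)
Your argument is exactly the paper's: form the levelwise fiber product $\cat U \times_{X} \cat V$, which is a common refinement and again a hypercover, and then apply Lemma \ref{lemma: existence of p-good refinement of hypercover} to it to obtain a split, locally finite $\cat P$-refinement in $HC(X)$. The proposal is correct and matches the paper's one-line proof, just spelled out in slightly more detail.
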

\begin{proof}
Given two hypercovers $U$ and $V$ it suffices to pick a refinement as in Lemma \ref{lemma: existence of p-good refinement of hypercover} for the fiber product $\cat U \times_{X} \cat V$.
\end{proof}

The crucial lemma now is the following:

\begin{lemma}\label{lemma-localiseatdense}
Consider a category of presheaves with values in a model category. The homotopy category obtained by localizing at all hypercovers is equivalent to localizing at a dense set of hypercovers. In particular, it suffices to check the stack condition in Definition \ref{defi: stack} on any dense set of hypercovers.
\end{lemma}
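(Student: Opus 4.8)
The plan is to present both localizations as left Bousfield localizations of the category $PSh(\mathcal{C},\tau)$ of $\mathcal{M}$-valued presheaves --- equipped with a model structure (projective or injective) for which the relevant localizations exist, exactly as in \cite{dugger2004hypercovers} --- and to compare them. Write $S$ for the given dense set of hypercovers, and let $L_{H}$ and $L_{S}$ be the localizations at the sets of maps $\hocolim U_{\bullet}\to y_{X}$ indexed, respectively, by all hypercovers and by the members of $S$. Any $S$-local weak equivalence is an $H$-local weak equivalence (an $H$-local object is in particular $S$-local, since $S\subseteq H$), so the identity functors form a Quillen pair $L_{S}PSh\rightleftarrows L_{H}PSh$. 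This pair is a Quillen equivalence --- and therefore induces an equivalence of the localized homotopy categories --- precisely when every generating map $\hocolim V_{\bullet}\to y_{X}$ of $L_{H}$, for $V$ an arbitrary hypercover, is already an $S$-local weak equivalence; equivalently, when every presheaf $F$ for which $\mathrm{Map}(\hocolim U_{\bullet},F)$ is a weak equivalence for all $U\in S$ enjoys the same property for all hypercovers. Since $\mathrm{Map}(\hocolim V_{\bullet},F)\simeq\holim_{n}F(V_{n})$, this last condition is exactly: descent for the hypercovers in $S$ (in the sense of Definition \ref{defi: stack}) implies descent for all hypercovers. So the rest of the proof is devoted to that implication, which is also the ``in particular'' sentence of the lemma.

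To establish it, fix an objectwise fibrant presheaf $F$ satisfying descent for every hypercover in $S$ and let $V_{\bullet}\to y_{X}$ be arbitrary. By density (Lemma \ref{lemma: existence of p-good refinement of hypercover}) choose a refinement $U_{\bullet}\to V_{\bullet}$ with $U\in S$; since $\holim_{n}F(U_{n})\simeq F(X)$ by hypothesis, the two-out-of-three property reduces us to showing that the refinement map induces a weak equivalence $\holim_{n}F(V_{n})\xrightarrow{\ \sim\ }\holim_{n}F(U_{n})$. This is the classical ``descent may be checked after a refinement'' statement, and I would prove it as in Section 7 of \cite{dugger2004hypercovers}, transported from simplicial-set to $\mathcal{M}$-valued coefficients. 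The mechanism: each $V_{n}$ is a coproduct of representables, so by applying Lemma \ref{lemma: existence of p-good refinement of hypercover} in every simplicial degree and patching the choices coherently --- the same style of inductive, degreewise construction already carried out in the proof of Lemma \ref{lemma-simplifykan}, together with the stability of locally finite, split $\mathcal{P}$-hypercovers under the fibre products used there (cf.\ Corollary \ref{corollary: existence of common refinements}) --- one obtains a bisimplicial presheaf $W_{\bullet,\bullet}$ with $W_{m,\bullet}\to V_{m}$ an $S$-hypercover for each $m$ and whose diagonal is an $S$-hypercover of $X$ refining $V$. Then $F(V_{m})\simeq\holim_{k}F(W_{m,k})$ by $S$-descent for each $m$, so iterating homotopy limits gives $\holim_{n}F(V_{n})\simeq\holim_{n}\holim_{k}F(W_{n,k})$, and this double homotopy limit is the totalization of $F$ along the diagonal $S$-hypercover of $X$, hence is $F(X)$; the corresponding computation for $U$ (or simply reusing $U\in S$) makes the identifications compatible with the refinement map. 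An alternative phrasing, closer to the toolkit used in this section, is to observe that the category of $S$-refinements of $V$ is nonempty and closed under the relevant fibre products, hence has contractible nerve, and then to apply Lemma \ref{lemma-duggersthm-2} to reindex the homotopy (co)limits attached to $V$ by that category, where the hypothesis on $F$ applies term by term.

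Assembling: every hypercover map is therefore an $S$-local weak equivalence, so $L_{S}PSh\rightleftarrows L_{H}PSh$ is a Quillen equivalence and the two localized homotopy categories coincide; the ``in particular'' clause is then the statement that a presheaf is a stack iff it is $H$-local iff it is $S$-local. I expect the genuine obstacle to be the middle step --- upgrading descent from the dense set $S$ to all hypercovers --- since the Bousfield-localization bookkeeping and the two-out-of-three reduction are formal, whereas here one really has to use that an arbitrary hypercover can be refined by one in $S$ in a way compatible across all simplicial degrees, and that the resulting iterated homotopy limits assemble into an honest totalization; checking naturality of all these identifications with respect to the structure maps is then routine bookkeeping of the kind already done in Lemma \ref{lemma-simplifykan}.
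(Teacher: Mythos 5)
Your first paragraph reduces the lemma to the same statement the paper reduces it to (every hypercover map is an $S$-local equivalence), but the way you then attack that statement diverges from the paper and contains a genuine gap. You propose to propagate descent from $S$ to an arbitrary hypercover $V$ by choosing degreewise $S$-refinements $W_{m,\bullet}\to V_m$, assembling them into a bisimplicial presheaf, and identifying $\holim_n\holim_k F(W_{n,k})$ with the totalization of $F$ over the diagonal hypercover, ``hence $F(X)$.'' That last step is circular: it invokes descent for $\mathrm{diag}(W)$, which is just some hypercover of $X$, not one known to lie in $S$. Density only guarantees that every hypercover \emph{admits a refinement} in $S$; it gives no reason that the diagonal of a bisimplicial object built from $S$-hypercovers is again in $S$, and the lemma is stated for an \emph{arbitrary} dense set, so you cannot appeal to special closure properties of $HC(X)$. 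Trying to fix this by refining the diagonal again just restarts the problem. In addition, the existence of a bisimplicial refinement whose rows are compatible with the structure maps of $V$, and the identification of the double homotopy limit with the homotopy limit over the diagonal hypercover, are both asserted rather than proved --- these are real theorems (essentially the refinement machinery of Dugger--Hollander--Isaksen), not bookkeeping. Your closing ``alternative phrasing'' via contractibility of the category of $S$-refinements and Lemma \ref{lemma-duggersthm-2} is too vague to evaluate: that lemma compares homotopy colimits over index categories and does not by itself compare $\holim_n F(V_n)$ with $\holim_n F(U_n)$ along a refinement.

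The paper avoids all of this with a purely formal sandwich argument, which you should compare against: given $f:X\to Y$ in $H$, density provides $g:Z\to Y$ in $H'$ with $g=f\circ h$; the connecting map $h$ is again a hypercover map, so density provides $g':Z'\to X$ in $H'$ with $g'=h\circ h'$. After localizing at $H'$ both $g=fh$ and $g'=hh'$ are invertible, so $h$ acquires a left and a right inverse and is an $H'$-local equivalence, whence $f$ is one by 2-out-of-3. No homotopy limits, no bisimplicial objects, and no assumption on $S$ beyond density are needed. If you want to keep your descent-theoretic route you must either prove that the diagonal refinement can be taken in $S$ (false in general) or replace that step by the two-sided refinement trick above.
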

\begin{proof}
In the case of simplicial presheaves this is Theorem 6.2 in \cite{dugger2004hypercovers}.  A general argument can be found in Proposition 4.5.13 in \cite{Ohrt14}, we repeat it here for the reader's convenience.
Let $H$ denote the class of hypercovers and $H'$ be a dense subset. It suffices to show that any $H$-local weak equivalence is also an $H'$-local weak equivalence. This follows if any map in $H$ is $H'$-local.

Let $f: X \to Y$ be in $H$. As $H'$ is dense there exists $g: Z \to Y$ in $H'$ such that $f \circ h = g$ for some $h$. We will show that $h$ is actually an $H'$-local equivalence. Then $f$ is too, by the 2-out-of-3 property.
By the 2-out-of-3 property $h$ must be in $H$, thus there is $g': Z' \to X$ such in $H'$ that $g' = h \circ h'$ for some $h'$. After localizing at $H'$ we know that $g = fh$ and $g' = hh'$ become equivalences, thus $h$ has a left and a right inverse after localizing and is an $H'$-equivalence.

It follows that $H$-local objects and $H'$-local objects agree, and these are the stacks with respect to hypercovers and with respect to our dense subset of hypercovers.
\end{proof}

  \begin{thm}
 Let $(\cat C, \tau)$ be as above.
 $Tw_{perf}$ is a stack of dg-categories on $(\cat C, \tau)$.
 \end{thm}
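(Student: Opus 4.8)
The plan is to reduce the stack condition, using density of the good hypercovers, to the descent statement we have already essentially proved in Lemmas~\ref{lemma-simplifykan} and~\ref{lemma-goodhypercovers}, and then to match up the two resulting homotopy limits levelwise. First I would invoke Lemma~\ref{lemma: existence of p-good refinement of hypercover}: every hypercover of $W$ admits a split, locally finite $\cat P$-refinement, so $HC(W)$ is a dense set of hypercovers. Hence by Lemma~\ref{lemma-localiseatdense} it suffices to verify the descent condition $Tw_{perf}(W)\simeq\holim_{[n]}Tw_{perf}(U_n)$ only for hypercovers $\cat U=\{U_\bullet\}\in HC(W)$.

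So fix such a $\cat U$. By Lemma~\ref{lemma-goodhypercovers} we have $Tw_{perf}(W)\simeq Tw_{perf}(W,\cat R,\cat U)$, and since $\cat U$ is split, the hypercover version of Proposition~\ref{prop: twisted perfect complexes are totalization} identifies $Tw_{perf}(W,\cat R,\cat U)$ with the totalization of the cosimplicial dg-category $Perf(U_\bullet)$ and, because that cosimplicial object is then Reedy fibrant, with $\holim_{[n]}Perf(U_n)$. Thus $Tw_{perf}(W)\simeq\holim_{[n]}Perf(U_n)$, and it remains to compare this with $\holim_{[n]}Tw_{perf}(U_n)$. For this I would argue levelwise: writing $U_n=\coprod_\alpha U_{n,\alpha}$ with each $U_{n,\alpha}\in\cat P$, we have $Perf(U_n)=\prod_\alpha Perf(U_{n,\alpha})$ and $Tw_{perf}(U_n)=\prod_\alpha Tw_{perf}(U_{n,\alpha})$; the special case of Lemma~\ref{lemma-goodhypercovers} gives $Tw_{perf}(U_{n,\alpha})\simeq Mod(\cat R(U_{n,\alpha}))^{cf}_{perf}$, which since $U_{n,\alpha}$ is $p$-good is in turn DK-equivalent to $Perf(U_{n,\alpha})$ via global sections followed by a fibrant--cofibrant replacement (the $p$-good comparison results of \cite{ZhWei20151}). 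These equivalences are natural in $U_{n,\alpha}$, so they assemble into a levelwise weak equivalence of cosimplicial objects $Tw_{perf}(U_\bullet)\simeq Perf(U_\bullet)$, the cosimplicial structure on each side being restriction along the simplicial maps of $\cat U$. Homotopy limits being homotopy invariant, we conclude $\holim_{[n]}Tw_{perf}(U_n)\simeq\holim_{[n]}Perf(U_n)\simeq Tw_{perf}(W)$, which is exactly descent for $\cat U$, and hence $Tw_{perf}$ is a stack.

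The genuinely hard content — descent on a single good hypercover and the reduction to good hypercovers — is already packaged in Lemmas~\ref{lemma-simplifykan}, \ref{lemma-goodhypercovers} and~\ref{lemma-localiseatdense}, so the main obstacle in this last step is the coherence bookkeeping of the levelwise comparison: one must check that the equivalences $Tw_{perf}(U_{n,\alpha})\simeq Perf(U_{n,\alpha})$ are compatible with the coface and codegeneracy maps (so that they really give a map of cosimplicial objects, not just a map in each degree), and that the Kan-extended functor $Tw_{perf}$ genuinely evaluates on the coproduct of representables $U_n$ as $\prod_\alpha Tw_{perf}(U_{n,\alpha})$. Once those compatibilities are pinned down, homotopy invariance of the homotopy limit closes the argument.
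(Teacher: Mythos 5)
Your proposal is correct and follows essentially the same route as the paper: reduce to hypercovers in $HC(X)$ via density (Lemma \ref{lemma: existence of p-good refinement of hypercover}) and Lemma \ref{lemma-localiseatdense}, identify $Tw_{perf}(X)$ with $Tw_{perf}(X,\cat R,\cat U)$ by Lemma \ref{lemma-goodhypercovers}, pass to $\holim_n Perf(U_n)$ via the hypercover version of Proposition \ref{prop: twisted perfect complexes are totalization}, and compare levelwise with $\holim_n Tw_{perf}(U_n)$ using the $p$-good special case. The paper's own proof is just a terser version of this same chain of equivalences; your extra attention to the naturality of the levelwise comparison is a reasonable elaboration rather than a different argument.
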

 \begin{proof}
 Fix some $X \in \cat C$ . By Lemma \ref{lemma-localiseatdense} it suffices to check the hypersheaf condition for hypercovers in $HC(X)$.  We choose a hypercover $\mathcal V = \{V_{\bullet}\}$ in $HC(X)$ and by Proposition \ref{prop: twisted perfect complexes are totalization} and Lemma \ref{lemma-goodhypercovers} we have
 \[
 Tw_{perf}(X, \cat R)\simeq Tw_{perf}(X, \cat R, \cat V) \simeq \holim_{i} Perf(\cat R(V_{i})) \simeq \holim_{i} Tw_{perf}(V_{i}, \cat R). \qedhere
 \]
 \end{proof}

\subsection{$G$-equivariant twisted complexes}\label{subsection: group action}
In this subsection we consider a group action on a space. Let $X$ be a scheme and $G$ be a   group scheme. We assume $G$ acts on $X$ from the right. Then we have the associated simplicial scheme $[X/G]_{\bullet}$ which is defined as follows
$$
[X/G]_k=X\times\underbrace{ G\times \ldots\times G}_{k}.
$$
Moreover the face and degeneracy maps in this description are given by the following formula
\begin{equation*}
\begin{split}
\partial_0&(x,g_1,\ldots, g_k)=(xg_1,g_2, \ldots,g_k),\\
\partial_i&(x,g_1,\ldots, g_k)=(x,g_1,\ldots,g_ig_{i+1},\ldots,g_k)\text{ if } 1\leq i\leq k-1,\\
\partial_k& (x,g_1,\ldots, g_k)=(x,g_1,\ldots, g_{k-1}),
\end{split}
\end{equation*}
and
$$
s_i(x,g_1,\ldots, g_k)=(x,g_1\ldots,g_i,e,g_{i+1},\ldots,g_k), ~0\leq i\leq k.
$$

Recall that a $G$-equivariant sheaf on $X$ (see \cite{bernstein1994equivariant} Section 0.2) is a pair $(E,\phi)$ where $E$ is a sheaf on $M$ and $\phi$ is an isomorphism
$$
\phi: \partial_0^*(E)\overset{\cong}{\to} \partial_1^*(E)
$$
satisfying the following two conditions
\begin{enumerate}
\item The cocycle condition $\partial_1^*\phi=\partial_2^*\phi\circ \partial_0^*\phi$;
\item $s_0^*\phi=id$.
\end{enumerate}
A morphism between $G$-equivariant sheaves $(E,\phi)\to (F,\psi)$ is a sheaf map $E\to F$ which commutes with $\phi$ and $\psi$.

Now we consider complexes of sheaves and want a higher version of $G$-equivariance.  One approach is to consider $[X/G]$ as a quotient \emph{stack} and in \cite{olsson2007sheaves} Olsson developed the theory of derived categories of sheaves on stacks. Note that he is considering cartesian sheaves on a simplicial scheme. The way to relate this to the homotopy limit point of view is to recall that a homotopy limit can also be computed via \emph{strictification}, see discussion and references in \cite{holstein2015moritalocallyconstant}.

 In this paper we take another approach, i.e. we develop a theory directly from the homotopy limit on the level of dg-categories.

First notice that the usual dg-presheaves $\mathcal{F}: \text{Scheme}^{op}\to \dgCat$  are actually  dg-pseudo-functors hence $\mathcal{F}([X/G]_{\bullet})$ is merely a pseudo-cosimplicial object in dgCat. For example $\mathcal{F}=Cpx$ or $Perf$. Therefore we need the rectification $\widetilde{\mathcal{F}}$ of $\mathcal{F}$ as in Section \ref{subsection: rectification}.

To state our result we need to define the following maps: For any $k\geq p$ let $\rho_{k,p}$ be the front face map
\begin{equation*}
\begin{split}
\rho_{k,p}: [X/G]_k\to& [X/G]_p\\
(x,g_1,\ldots, g_k)\mapsto& (x,g_1,\ldots g_p)
\end{split}
\end{equation*}
and $\tau_{k,p}$ be the back face map
\begin{equation*}
\begin{split}
\tau_{k,p}: [X/G]_k\to& [X/G]_p\\
(x,g_1,\ldots, g_k)\mapsto& (x\cdot g_1 \cdot\ldots \cdot g_{k-p},g_{k-p+1},\ldots, g_k)
\end{split}
\end{equation*}

\begin{prop}\label{prop: totalization group action on a space}
Let $X$, $G$ and $[X/G]_{\bullet}$ be as above. Then an object in $\text{Tot} \widetilde{\mathcal{F}}([X/G]_{\bullet})$ consists of a pair $(E,\phi)$ where $E$ is an object of $\widetilde{\mathcal{F}}([X/G]_0)=\widetilde{\mathcal{F}}(X)$ and $\phi$ is a collection of maps
$$
\phi^{k,1-k}\in Hom^{1-k}_{\widetilde{\mathcal{F}}([M/G]_k)}(\tau_{k,0}^*(E),\rho_{k,0}^*(E)), \text{ for all } k\geq 1
$$
which satisfy the following two conditions
\begin{enumerate}
\item the Maurer-Cartan equation
\begin{equation}\label{equation: MC for group action}
(-1)^kd(\phi^{k,1-k})+\sum_{j=1}^{k-1}(-1)^j\partial_j^*(\phi^{k-1,2-k})+\sum_{j=1}^{k-1}(-1)^{(1-j)(k-j)}\rho_{k,j}^*(\phi^{j,1-j})\tau_{k,k-j}^*(\phi^{k-j,1-k+j})=0;
\end{equation}

\item the non-degenerate condition: $\phi^{1,0}$ is invertible up to homotopy.
\end{enumerate}

Moreover, a morphism $\theta$ of degree $m$ from $(E,\phi)$ to $(F,\psi)$ is given by a collection
$$
\theta^{k,m-k}\in \Hom^{m-k}_{\widetilde{\mathcal{F}}([X/G]_k)}(\tau_{k,0}^*(E),\rho_{k,0}^*(F)) \text{ for all } k\geq 0
$$
and the differential is given by
$$
d\theta=\D\theta+\phi\cdot \theta-(-1)^m\theta\cdot \psi
$$
or more explicitly
\begin{equation*}
\begin{split}
(d\theta)^{k,m+1-k}&=(-1)^k d_{\widetilde{\mathcal{F}}(X_k)}\theta^{k,m-k}+\sum_{j=1}^{k-1}(-1)^j\partial^*_j\theta^{k-1,m+1-k}\\
+\sum_{l=1}^k&(-1)^{(1-l)(k-l)}\rho_{k,l}^*\phi^{l,1-l}\tau_{k,k-l}^*\theta^{k-l,m-k+1}+\sum_{l=0}^{k-1}(-1)^{(m-l)(k-l)}\rho_{k,l}^*\theta^{l,m-l}\tau_{k,k-l}^*\psi^{k-l,1-k+l}.
\end{split}
\end{equation*}
\end{prop}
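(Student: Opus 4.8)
The plan is to deduce the statement directly from Theorem \ref{thm: totalization in dgCat} applied to the cosimplicial dg-category $\mathcal{B}^{\bullet} := \widetilde{\mathcal{F}}([X/G]_{\bullet})$, the only real work being the translation of the abstract "standard morphism" data into the front/back face maps $\rho_{k,p},\tau_{k,p}$. Since $\widetilde{\mathcal{F}}$ is a strict functor by Lemma \ref{lemma: F tilde is a strict functor}, $\mathcal{B}^{\bullet}$ is an honest cosimplicial object in $\dgCat_{\DK}$, so Theorem \ref{thm: totalization in dgCat} applies verbatim: an object of $\Tot(\mathcal{B}^{\bullet})$ is a pair $(E,\phi)$ with $E\in \mathcal{B}^0=\widetilde{\mathcal{F}}(X)$ and standard morphisms $\phi^{k,1-k}_{0\ldots k}\in \Hom^{1-k}_{\mathcal{B}^k}((d^k_k)_*E,(d^k_0)_*E)$ subject to the Maurer-Cartan equation (\ref{equation: MC for totalization explicit}) and the homotopy-invertibility of $\phi^{1,0}_{01}$, with morphisms and differential as described there.

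First I would pin down the coface structure maps of $\mathcal{B}^{\bullet}$. Since $[X/G]_{\bullet}$ is a simplicial scheme, an order-preserving map $\sigma\colon[n]\to[m]$ gives a map of schemes $[X/G]_m\to[X/G]_n$, and applying $\widetilde{\mathcal{F}}$ yields $\sigma_*\colon\mathcal{B}^n\to\mathcal{B}^m$. The key combinatorial identifications are: the last-vertex inclusion $d^k_k\colon[0]\to[k]$ induces the iterated face $\partial_0^k\colon[X/G]_k\to X$, which by the explicit face formulas is exactly $\tau_{k,0}$; the first-vertex inclusion $d^k_0\colon[0]\to[k]$ induces $\partial_1\cdots\partial_k=\rho_{k,0}$; the inclusion $[j]\hookrightarrow[k]$ onto $\{0,\ldots,j\}$ induces $\rho_{k,j}$; the inclusion $[k-j]\hookrightarrow[k]$ onto $\{j,\ldots,k\}$ induces $\tau_{k,k-j}$; and each coface $d^j\colon[k-1]\to[k]$ induces $\partial_j$. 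Substituting these into the object data of Theorem \ref{thm: totalization in dgCat} gives $\phi^{k,1-k}\in\Hom^{1-k}_{\widetilde{\mathcal{F}}([X/G]_k)}(\tau_{k,0}^*E,\rho_{k,0}^*E)$; substituting them into (\ref{equation: MC for totalization explicit}) — keeping in mind that the signs $(-1)^{(1-j)(k-j)}$ already absorb the shuffled multiplication (\ref{equation: shuffled multiplication in simplicial dg}) — turns the Maurer-Cartan equation into precisely (\ref{equation: MC for group action}), and the non-degeneracy condition into the invertibility up to homotopy of $\phi^{1,0}$. The same substitution applied to the morphism complex $\Hom_{\mathcal{B}^k}$ and its differential $\D\theta+\psi\cdot\theta-(-1)^m\theta\cdot\phi$ from Theorem \ref{thm: totalization in dgCat} yields the stated formulas for morphisms of $\Tot\widetilde{\mathcal{F}}([X/G]_{\bullet})$.

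The main obstacle will be purely book-keeping: (a) determining, for each order-preserving map that shows up as a standard index, exactly which composite of $\partial_0,\partial_i,\partial_k$ it induces on $[X/G]_{\bullet}$ via the explicit group-action face/degeneracy formulas; and (b) checking that the source and target objects of the two morphisms composed in the quadratic term of the Maurer-Cartan equation actually match up — i.e.\ verifying the simplicial identities $\tau_{j,0}\circ\rho_{k,j}=\rho_{k-j,0}\circ\tau_{k,k-j}$ (both send $(x,g_1,\ldots,g_k)\mapsto xg_1\cdots g_j$), together with $\tau_{k-j,0}\circ\tau_{k,k-j}=\tau_{k,0}$ and $\rho_{j,0}\circ\rho_{k,j}=\rho_{k,0}$, so that $\rho_{k,j}^*\phi^{j,1-j}$ and $\tau_{k,k-j}^*\phi^{k-j,1-k+j}$ are genuinely composable with the correct total source $\tau_{k,0}^*E$ and target $\rho_{k,0}^*E$. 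Once these identifications are in place the proposition is immediate; in particular no Reedy-fibrancy hypothesis is needed here since only the totalization, not the homotopy limit, is claimed.
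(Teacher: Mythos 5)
Your proposal is correct and follows exactly the paper's route: the paper proves this proposition by citing Theorem \ref{thm: totalization in dgCat} as a direct corollary, and your translation of the standard morphisms and coface maps into the front/back face maps $\rho_{k,p}$, $\tau_{k,p}$ (together with the observation that strictness of $\widetilde{\mathcal{F}}$ via Lemma \ref{lemma: F tilde is a strict functor} is what makes the theorem applicable, and that no Reedy fibrancy is needed at this stage) is precisely the bookkeeping the paper leaves implicit.
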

\begin{proof}
It is a direct corollary of Theorem \ref{thm: totalization in dgCat}.
\end{proof}

\begin{rmk}
Proposition \ref{prop: F tilde is weakly equivalent to F}  tells us that for any $X$, the dg-functor $\widetilde{\mathcal{F}}(X)\to\mathcal{F}(X)$ is quasi-essentially surjective and degreewise fully faithful. As a result in Proposition \ref{prop: totalization group action on a space} we can replace $E$ and $\phi$ by their image in $\mathcal{F}$ and the result will not be infected.
\end{rmk}

For $k=1$ Equation (\ref{equation: MC for group action}) is simply $d\phi^{1,0}=0$, i.e. $\phi^{1,0}\in\Hom^{0}_{\mathcal{F}([M/G]_1)}(\partial_0^*(E),\partial_1^*(E))$ is closed. For $k=2$  Equation (\ref{equation: MC for group action}) becomes
$$
d\phi^{2,-1}-\partial_1^*\phi^{1,0}+\rho_{2,1}^*\phi^{1,0}\tau_{2,1}^*\phi^{1,0}=0,
$$
which means that $\phi^{1,0}$ satisfies the cocycle condition \emph{up to homotopy} and $\phi^{2,-1}$ gives the homotopy operator.

Now we need to verify that  the cosimplicial diagram  $\mathcal{F}([X/G]_{\bullet})$ is  Reedy fibrant so that the totalization is weakly equivalent to the homotopy limit.

\begin{prop}
If the  $G$ is discrete and $\mathcal{F}$ sends finite coproducts to products, then the construction in Proposition \ref{prop: totalization group action on a space} gives  the homotopy limit of $\mathcal{F}([X/G]_{\bullet})$.
\end{prop}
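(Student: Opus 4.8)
The plan is to reduce the statement to Corollary \ref{coro: homotopy limit in dgcat explicit formula}, which says that for a Reedy fibrant cosimplicial dg-category the totalization computes the homotopy limit. Since Proposition \ref{prop: totalization group action on a space} has already identified $\Tot \widetilde{\mathcal{F}}([X/G]_{\bullet})$ explicitly, it suffices to verify that the cosimplicial dg-category $\widetilde{\mathcal{F}}([X/G]_{\bullet})$ is Reedy fibrant, and for this I will invoke Proposition \ref{prop: split and left exact implies Reedy fibrant}: it is enough to show that the simplicial scheme $[X/G]_{\bullet}$ is \emph{split} in the sense of Definition \ref{defi: split simplicial spaces}. (Recall that passing from $\mathcal{F}$ to its rectification $\widetilde{\mathcal{F}}$ preserves the property of sending coproducts to products, as observed in the proof of Proposition \ref{prop: split and left exact implies Reedy fibrant}, so the hypothesis on $\mathcal{F}$ transfers to $\widetilde{\mathcal{F}}$.)

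The key point, and the only place where discreteness of $G$ is genuinely used, is the explicit splitting of $[X/G]_{\bullet}$. Since $G$ is discrete it decomposes as a disjoint union $G = \{e\} \amalg G'$ with $G' := G \setminus \{e\}$, so that $[X/G]_k = X \times G^k = X \times (\{e\} \amalg G')^k$ is a coproduct of $2^k$ pieces indexed by the subsets of $\{1,\dots,k\}$. Setting $N_k := X \times (G')^k$, I claim that the canonical map
$$
\coprod_{\sigma\colon [k] \twoheadrightarrow [n]} N_{\sigma} \longrightarrow [X/G]_k
$$
is an isomorphism, where $N_{\sigma}$ is a copy of $N_n$ mapped in by the iterated degeneracy associated with the surjection $\sigma$. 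Concretely, that iterated degeneracy inserts a copy of $e$ in each slot that $\sigma$ collapses, so the summand indexed by $\sigma$ is precisely the locus of tuples $(x, g_1, \dots, g_k)$ whose entries equal $e$ at exactly the positions prescribed by $\sigma$; since the order-preserving surjections $[k] \twoheadrightarrow [n]$ are in bijection with the subsets of $\{1,\dots,k\}$ recording which $g_i$ equal $e$, these loci are pairwise disjoint and exhaust $[X/G]_k$. This is a routine verification with the simplicial identities, and I would only spell out the degree-one and degree-two cases as illustration.

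Once $[X/G]_{\bullet}$ is known to be split, Proposition \ref{prop: split and left exact implies Reedy fibrant} yields that $\widetilde{\mathcal{F}}([X/G]_{\bullet})$ is Reedy fibrant; when $G$ is infinite one uses here in addition that the sheaf categories in question (such as $Cpx$ and $Perf$) send arbitrary disjoint unions to products, while for finite $G$ the finite-coproduct hypothesis of the statement already suffices. Then Corollary \ref{coro: homotopy limit in dgcat explicit formula} applies and identifies $\holim \widetilde{\mathcal{F}}([X/G]_{\bullet})$ with the totalization described in Proposition \ref{prop: totalization group action on a space}, which is the asserted model of $G$-equivariant twisted complexes. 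The main obstacle is the splitting claim; everything else is a direct appeal to results already established above.
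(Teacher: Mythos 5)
Your proof is correct and takes essentially the same route as the paper: the paper's proof likewise just observes that $[X/G]_{\bullet}$ is split when $G$ is discrete and then invokes Proposition \ref{prop: split and left exact implies Reedy fibrant} together with Theorem \ref{thm: totalization and homotopy limit are weakly equi for fibrant object}, so your only addition is to make the splitting $N_k = X\times(G\setminus\{e\})^k$ explicit. (Your caveat about infinite $G$ is unnecessary: the decomposition $\coprod_{\sigma}N_{\sigma}\to [X/G]_k$ has only finitely many summands, so the finite-coproduct hypothesis already suffices.)
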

\begin{proof}
  It is easy to see that in this case the simplicial space  $[X/G]_{\bullet}$ is split and hence by Proposition \ref{prop: split and left exact implies Reedy fibrant} $\mathcal{F}([X/G]_{\bullet})$ is  Reedy fibrant  and therefore the totalization and homotopy limit are weakly equivalent by Theorem \ref{thm: totalization and homotopy limit are weakly equi for fibrant object}.
\end{proof}

This proposition hold for example if we consider $\mathcal{F}=Cpx$, the functor that sends $X$ to the dg-category of complexes of sheaves of $\mathcal{O}_X$-modules on $X$.

We are left to consider the case when $G$ is not discrete. In this case it seems that the diagram  $\mathcal{F}([X/G]_{\bullet})$ is  no longer Reedy fibrant. (The matching maps are not levelwise surjective on hom-complexes in general.) We still expect the totalization and the homotopy limit to agree.
\begin{conj}
If $G$ is a reductive group scheme and $\mathcal{F}=Cpx$, then the construction in Proposition \ref{prop: totalization group action on a space} still gives  the homotopy limit of $Cpx([X/G]_{\bullet})$.
\end{conj}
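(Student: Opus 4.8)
By Theorem~\ref{thm: totalization in dgCat}, $\Tot\widetilde{\mathcal F}([X/G]_\bullet)$ is precisely the explicit model of Proposition~\ref{prop: totalization group action on a space}, so the content of the conjecture is that the natural comparison $\Tot\widetilde{\mathcal F}([X/G]_\bullet)\to\holim\widetilde{\mathcal F}([X/G]_\bullet)$ is a DK-equivalence even though $\widetilde{\mathcal F}([X/G]_\bullet)$ is \emph{not} Reedy fibrant: for $\dim G>0$ the matching maps fail the surjectivity condition F1, because the degeneracies of $[X/G]_\bullet$ are not free and $Cpx$ does not send the matching schemes to products. The plan is to fix a Reedy fibrant replacement $r\colon\widetilde{\mathcal F}([X/G]_\bullet)\xrightarrow{\ \sim\ }\mathcal Q^\bullet$, so that $\Tot\mathcal Q^\bullet\simeq\holim$ by Theorem~\ref{thm: totalization and homotopy limit are weakly equi for fibrant object}, and then to show that $\Tot(r)$ is itself a DK-equivalence. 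The one input from representation theory is that a reductive group over a characteristic-$0$ field is \emph{linearly reductive}: the invariants functor $(-)^G$ is exact, equivalently $H^{>0}(BG,\mathcal M)=0$ for every quasi-coherent $\mathcal M$; this is exactly what cancels the failure of Reedy fibrancy.

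\textbf{Comparing the two totalizations.} I would carry out the comparison on the skeletal tower of partial totalizations $\{\Tot_n\}$, in which $\Tot_n$ is obtained from $\Tot_{n-1}$ by a pullback along the $n$-th relative matching map and the boundary inclusion of the $n$-simplex; for $\mathcal Q^\bullet$ this is a tower of fibrations with $\lim_n\Tot_n\mathcal Q^\bullet=\Tot\mathcal Q^\bullet$. One proves by induction on $n$ that $\Tot_n\widetilde{\mathcal F}([X/G]_\bullet)\to\Tot_n\mathcal Q^\bullet$ is a DK-equivalence, the inductive step reducing to a comparison of the two $n$-th matching maps. The point is that the deviation of $\widetilde{\mathcal F}([X/G]_n)\to M_n\widetilde{\mathcal F}([X/G]_\bullet)$ from a fibration is governed by hom-complexes whose cohomology is assembled from the cohomology of the diagonal $G^{\times j}$-actions, and these vanish in positive degree by linear reductivity; hence on the associated graded of the tower the comparison is a quasi-isomorphism, and the same uniform vanishing kills the relevant $\lim^1$ in the limit. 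A more transparent packaging of the same computation is hom-complex by hom-complex: for objects $(E,\phi),(F,\psi)$ the complexes $\Hom_{\holim}$ and $\Hom_{\Tot}$ between them are respectively the homotopy limit and the strict totalization of one and the same cosimplicial cochain complex $n\mapsto\Hom^{\bullet}_{\widetilde{\mathcal F}([X/G]_n)}(\tau_{n,0}^{*}E,\rho_{n,0}^{*}F)$, and after rectification the $n$-th matching cokernel of this cosimplicial complex has no higher cohomology, so that strict $\Tot$ already computes the homotopy limit. This is the precise place where reductivity enters.

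\textbf{Essential surjectivity on $H^0$.} It remains to see that every object of $\holim\widetilde{\mathcal F}([X/G]_\bullet)$ lies, up to equivalence, in the image of $\Tot$. Such an object is a complex on $X$ together with descent data along $[X/G]_\bullet$ that is coherent only up to homotopy, and one must rigidify it to a strict Maurer--Cartan datum $(E,\phi)$ as in Proposition~\ref{prop: totalization group action on a space}. This is effectivity of descent for the quotient stack $[X/G]$, i.e.\ the equivalence $\D(\mathrm{QCoh}[X/G])\simeq\D^G(\mathrm{QCoh}\,X)$, and it again rests on exactness of invariants: the higher coherences are trivialised by the obstruction theory of the same cosimplicial cochain complexes whose higher cohomology was shown to vanish. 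By Proposition~\ref{prop: F tilde is weakly equivalent to F} all of this may be phrased for $\mathcal F=Cpx$ itself rather than for the rectification $\widetilde{\mathcal F}$.

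\textbf{The main obstacle.} The crux — and the reason this is only a conjecture — is the middle step: $\Tot$ does not preserve levelwise weak equivalences, so $r$ being one is not enough, and the argument lives or dies on turning ``the Reedy deficit is acyclic because $G$ is reductive'' into an actual proof. This needs (a) an explicit enough description of $M_n\widetilde{\mathcal F}([X/G]_\bullet)$ and of how far $\widetilde{\mathcal F}([X/G]_n)\to M_n\widetilde{\mathcal F}([X/G]_\bullet)$ is from being a fibration, (b) a cohomological vanishing statement uniform in $n$, strong enough to control both the associated graded of the $\Tot$-tower and its $\lim^1$, and (c) care in transporting the inputs (exactness of $(-)^G$, $H^{>0}(BG,-)=0$) through the dg-Yoneda equivalence of Proposition~\ref{prop: F tilde is weakly equivalent to F}. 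In positive characteristic ``reductive'' must be weakened to ``linearly reductive'', or replaced by a more delicate argument with good filtrations, since $\mathrm{Rep}(G)$ is then not semisimple.
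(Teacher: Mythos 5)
The statement you are addressing is stated in the paper as a \emph{conjecture}: the authors give no proof, and explicitly record the obstruction (for non-discrete $G$ the diagram is not Reedy fibrant because the matching maps fail to be surjective on hom-complexes), so there is nothing in the paper to compare your argument against. Your write-up is an honest and sensible research plan --- identify the Reedy deficit, argue it is acyclic via linear reductivity, control the tower of partial totalizations and its $\lim^1$, and handle quasi-essential surjectivity by descent for $[X/G]$ --- but as you yourself say in your final paragraph, the middle step is not carried out, so this is a strategy, not a proof.

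The concrete gap is step (b): you assert that the deviation of $\widetilde{\mathcal F}([X/G]_n)\to M_n\widetilde{\mathcal F}([X/G]_\bullet)$ from a fibration is ``governed by hom-complexes whose cohomology is assembled from the cohomology of the diagonal $G^{\times j}$-actions'' and hence vanishes by reductivity, but no such identification is established, and it is not clear it is even true in the generality of the conjecture: the paper's $Cpx$ is the category of \emph{all} complexes of $\mathcal O$-modules, not quasi-coherent ones, so exactness of $(-)^G$ on $\mathrm{QCoh}$ and $H^{>0}(BG,-)=0$ do not directly apply, and the matching object $M_n$ is a limit of dg-categories of complexes on the schemes $X\times G^{\times j}$, whose failure to split off as a product is a statement about restriction along degeneracies of $[X/G]_\bullet$ rather than about group cohomology per se. Likewise your essential-surjectivity step invokes $\D(\mathrm{QCoh}[X/G])\simeq \D^G(\mathrm{QCoh}\,X)$, which in this homotopy-coherent form is essentially the statement being conjectured. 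Until (a)--(c) of your own list are supplied --- in particular a proof that the relative matching cokernels of the cosimplicial hom-complexes are acyclic uniformly in $n$, in the actual category of $\mathcal O$-module complexes --- the conjecture remains open, exactly as the authors leave it.
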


\bibliography{homotopylimitbib}{}

\begin{thebibliography}{{Rad}06}

\bibitem[BBB13]{ben2013milnor}
Oren Ben-Bassat and Jonathan Block.
\newblock Milnor descent for cohesive dg-categories.
\newblock {\em J. K-Theory}, 12(3):433--459, 2013.

\bibitem[BL94]{bernstein1994equivariant}
Joseph Bernstein and Valery Lunts.
\newblock {\em Equivariant sheaves and functors}, volume 1578 of {\em Lecture
  Notes in Mathematics}.
\newblock Springer-Verlag, Berlin, 1994.

\bibitem[DHI04]{dugger2004hypercovers}
Daniel Dugger, Sharon Hollander, and Daniel~C. Isaksen.
\newblock Hypercovers and simplicial presheaves.
\newblock {\em Math. Proc. Cambridge Philos. Soc.}, 136(1):9--51, 2004.

\bibitem[Dug]{Dugger}
Daniel Dugger.
\newblock {A primer on homotopy colimits}.
\newblock Available at: math.uoregon.edu/~ddugger/hocolim.pdf.

\bibitem[Gai14]{Gaitsgory14}
Dennis Gaitsgory.
\newblock {Sheaves of categories and the notion of 1-affineness}.
\newblock Available from the author's website at
  math.harvard.edu/~gaitsgde/GL/shvsofcat.pdf, 2014.

\bibitem[Hir03]{hirschhorn2009model}
Philip~S. Hirschhorn.
\newblock {\em Model categories and their localizations}, volume~99 of {\em
  Mathematical Surveys and Monographs}.
\newblock American Mathematical Society, Providence, RI, 2003.

\bibitem[Hol14]{holstein2014properness}
Julian V.~S. Holstein.
\newblock Properness and simplicial resolutions in the model category dg{C}at.
\newblock {\em Homology Homotopy Appl.}, 16(2):263--273, 2014.

\bibitem[Hol15a]{holstein2015moritacohomology}
Julian V.~S. Holstein.
\newblock Morita cohomology.
\newblock {\em Math. Proc. Cambridge Philos. Soc.}, 158(1):1--26, 2015.

\bibitem[Hol15b]{holstein2015moritalocallyconstant}
Julian V.~S. Holstein.
\newblock Morita cohomology and homotopy locally constant sheaves.
\newblock {\em Math. Proc. Cambridge Philos. Soc.}, 158(1):27--35, 2015.

\bibitem[Lur11a]{Lurie11c}
Jacob Lurie.
\newblock {Derived algebraic geometry VII: Spectral schemes}.
\newblock {\em preprint available from author's website as http://math.
  harvard. edu/$\sim$ lurie/papers/DAG-VII. pdf}, 2011.

\bibitem[Lur11b]{Lurie11g}
Jacob Lurie.
\newblock {Derived algebraic geometry XI: Descent Theorems}.
\newblock {\em preprint available from author's website as http://math.
  harvard. edu/$\sim$ lurie/papers/DAG-VII. pdf}, 2011.

\bibitem[Ohr]{Ohrt14}
Christopher Ohrt.
\newblock {{$\mathbb A^1$}-homotopy theory}.
\newblock Available from author's website at
  \url{http://math.ucla.edu/~cohrt/A1-homotopy}.

\bibitem[Ols07]{olsson2007sheaves}
Martin Olsson.
\newblock Sheaves on {A}rtin stacks.
\newblock {\em J. Reine Angew. Math.}, 603:55--112, 2007.

\bibitem[OTT81a]{o1981hirzebruch}
Nigel~R. O'Brian, Domingo Toledo, and Yue Lin~L. Tong.
\newblock Hirzebruch-{R}iemann-{R}och for coherent sheaves.
\newblock {\em Amer. J. Math.}, 103(2):253--271, 1981.

\bibitem[OTT81b]{o1981trace}
Nigel~R. O'Brian, Domingo Toledo, and Yue Lin~L. Tong.
\newblock The trace map and characteristic classes for coherent sheaves.
\newblock {\em Amer. J. Math.}, 103(2):225--252, 1981.

\bibitem[{Rad}06]{RadulescuBanu06}
A.~{Radulescu-Banu}.
\newblock {Cofibrations in homotopy theory}.
\newblock {\em ArXiv Mathematics e-prints}, September 2006.

\bibitem[Rie14]{riehl2014categorical}
Emily Riehl.
\newblock {\em Categorical homotopy theory}, volume~24 of {\em New Mathematical
  Monographs}.
\newblock Cambridge University Press, Cambridge, 2014.

\bibitem[Sim05]{simpson05geometricity}
Carlos Simpson.
\newblock {Geometricity of the Hodge filtration on the $\infty$-stack of
  perfect complexes over $X_{DR}$}.
\newblock {\em ArXiv e-prints}, October 2005.

\bibitem[{Sta}17]{Stacks13}
The {Stacks Project Authors}.
\newblock {Hypercoverings}.
\newblock In {\em \itshape Stacks project}, 2017.

\bibitem[Tab05a]{tabuada2005invariants}
Gon{\c{c}}alo Tabuada.
\newblock Invariants additifs de {DG}-cat\'egories.
\newblock {\em Int. Math. Res. Not.}, (53):3309--3339, 2005.

\bibitem[Tab05b]{tabuada2005structure}
Gon\c{c}alo Tabuada.
\newblock Une structure de cat\'egorie de mod\`eles de {Q}uillen sur la
  cat\'egorie des dg-cat\'egories.
\newblock {\em C. R. Math. Acad. Sci. Paris}, 340(1):15--19, 2005.

\bibitem[Tab10]{tabuada2010homotopy}
Gon{\c{c}}alo Tabuada.
\newblock Homotopy theory of dg categories via localizing pairs and
  {D}rinfeld's dg quotient.
\newblock {\em Homology, Homotopy Appl.}, 12(1):187--219, 2010.

\bibitem[Tam07]{tamarkin2007dg}
Dmitry Tamarkin.
\newblock What do dg-categories form?
\newblock {\em Compos. Math.}, 143(5):1335--1358, 2007.

\bibitem[TT78]{toledo1978duality}
Domingo Toledo and Yue Lin~L. Tong.
\newblock Duality and intersection theory in complex manifolds. {I}.
\newblock {\em Math. Ann.}, 237(1):41--77, 1978.

\bibitem[TV08]{Toen05a}
Bertrand To\"{e}n and Gabriele Vezzosi.
\newblock {Homotopical algebraic geometry II: Geometric stacks and
  applications}.
\newblock {\em Mem. Amer. Math. Soc.}, 193(902), 2008.

\bibitem[Vis05]{vistoli2005grothendieck}
Angelo Vistoli.
\newblock Grothendieck topologies, fibered categories and descent theory.
\newblock In {\em Fundamental algebraic geometry}, volume 123 of {\em Math.
  Surveys Monogr.}, pages 1--104. Amer. Math. Soc., Providence, RI, 2005.

\bibitem[Wei16]{ZhWei20151}
Zhaoting Wei.
\newblock Twisted complexes on a ringed space as a dg-enhancement of the
  derived category of perfect complexes.
\newblock {\em Eur. J. Math.}, 2(3):716--759, 2016.

\end{thebibliography}

\bibliographystyle{alpha}

\end{document}